\documentclass[11pt,a4paper]{article}
\usepackage[utf8]{inputenc}
\usepackage[T1]{fontenc}
\usepackage{lmodern}
\usepackage{microtype}
\usepackage[margin=2.6cm]{geometry}
\usepackage{amsmath,amssymb,amsthm,mathtools,mathrsfs}
\usepackage{xcolor}
\usepackage{enumitem}
\usepackage[colorlinks=true,linkcolor=blue!50!black,citecolor=green!40!black,urlcolor=blue!60!black]{hyperref}
\usepackage{comment}
\usepackage[backend=biber,sorting=nyt,style=ieee]{biblatex}
\definecolor{LightGray}{rgb}{0.83,0.83,0.83}
\RequirePackage[obeyFinal,
                color       = LightGray,
                bordercolor = LightGray,
                textsize    = footnotesize,
                figwidth    = 0.99\linewidth]{todonotes}

\theoremstyle{plain}
\newtheorem{theorem}{Theorem}[section]
\newtheorem{proposition}[theorem]{Proposition}
\newtheorem{lemma}[theorem]{Lemma}
\newtheorem{corollary}[theorem]{Corollary}

\theoremstyle{definition}
\newtheorem{definition}[theorem]{Definition}
\newtheorem{example}[theorem]{Example}

\theoremstyle{remark}
\newtheorem{remark}[theorem]{Remark}
\newtheorem{convention}[theorem]{Convention}

\newcommand{\R}{\mathbb{R}}
\newcommand{\Z}{\mathbb{Z}}
\newcommand{\T}{\mathbb{T}}
\newcommand{\N}{\mathbb{N}}
\newcommand{\Cx}{\mathbb{C}}
\newcommand{\hull}{\Omega}
\newcommand{\trans}{\Xi}
\newcommand{\GL}{\mathcal{G}_{\!L}}
\newcommand{\Fell}{\mathscr{C}}
\newcommand{\Cst}{C^*}
\newcommand{\Bd}{\mathbb{B}}
\newcommand{\co}{\mathrm{c}}
\newcommand{\dd}{\,\mathrm{d}}

\DeclareMathOperator{\dom}{dom}

\author{Diego Jaur\'e}
\date{}

\begin{document}

\title{\bfseries Wiener's lemma for quasicrystals: 
hull-uniform decay of covariant kernels on Delone sets}
\maketitle

\begin{abstract}
This paper studies continuous covariant kernel families over hulls of
Delone sets of finite local complexity using graded Wiener-algebra methods.
The norm given by the least common weighted summable off-diagonal
profile is identified isometrically with a graded $\ell^1$-norm.
The resulting Banach $*$-algebras are symmetric, spectrally invariant
in the reduced groupoid C*-algebra, and stable under holomorphic
functional calculus. Exact physical weights give preservation of
polynomial and stretched-exponential classes, including a fixed
stretched-exponential rate.

A configuration with dense orbit has a faithful regular representation.
Invertibility on that configuration therefore gives an inverse family
with one profile valid throughout the hull; for repetitive sets every
configuration has this property.

For polynomial weights of order $s>0$, return-vector counting yields
a differential estimate with parameter $\kappa/(\kappa+s)$, where
$\kappa$ is a growth exponent of the return vectors and may be taken
equal to $d$ for Meyer sets. Combined with established
norm-controlled inversion criteria, this gives explicit bounds in
the common-profile norm. Explicit bounds are also obtained for
stretched-exponential physical weights and for gapped Fermi projections.

\end{abstract}
\noindent\textbf{MSC 2020.} 46L55, 22A22, 43A20, 46K05, 52C23, 81Q10.

\noindent\textbf{Keywords.} Quasicrystal, Delone set, Wiener's lemma,
nearsightedness, convolution-dominated operators, \'etale groupoid,
symmetric Banach $*$-algebra, norm-controlled inversion.

\tableofcontents

\section{Introduction}\label{sec:intro}

\subsection*{The problem}

In localized-basis descriptions of electronic structure, Hamiltonians,
overlap matrices and density matrices are indexed by atomic positions.
Under suitable locality assumptions, their off-diagonal decay is relevant
to both analysis and approximation. Nearsightedness provides a physical
motivation for studying localization \cite{Kohn96,PK05}; spectral
invariance provides an algebraic mechanism for preserving specified
decay classes under inversion and holomorphic functional calculus
\cite{Groechenig}. These are related perspectives, not equivalent
statements without additional hypotheses. Inverse-closed matrix and
convolution-dominated algebras, including weighted and operator-valued
versions, have a substantial literature
\cite{GL,FGL08,BK,Baskakov97,KB25}.

For aperiodic point sets, a natural object is a continuous covariant
family $(K_\omega)_\omega$ over a compact hull
\cite{BHZ,Kellendonk,LenzStollmann03}.
Contour integration, approximation theory and Combes--Thomas methods
already provide localization estimates, including uniform estimates
under appropriate geometric and spectral assumptions
\cite{BBR,CO,OTC,MLO}. The family norm of interest requires one
weighted summable profile to dominate every kernel throughout the
hull. A uniform bound on the row and column sums is not, by itself,
a bound in this common-profile norm.

This paper identifies this norm with the graded Wiener norm of the
Delone groupoid algebra and makes the resulting inversion estimates
explicit in the geometry of the return vectors. For Delone sets of
finite local complexity, the construction treats polynomial weights
and stretched-exponential physical weights without changing the
prescribed stretched-exponential rate.

\subsection*{Main results}

Let $L\subset\R^d$ be an $(r,R)$-Delone set of finite local complexity
(FLC), $\hull$ its hull, $\trans\subset\hull$ the transversal of
configurations containing the origin. A \emph{hull-covariant kernel} is
a family $K=(K_\omega)_{\omega\in\trans}$ of functions
$K_\omega:\omega\times\omega\to M_n(\Cx)$ ($n$ orbitals per site)
which is invariant under simultaneous translation of configuration and
positions and depends continuously on the configuration in the local
matching topology (Definition~\ref{def:cd}). Tight-binding Hamiltonians
and overlap matrices whose entries are computed from the local atomic
environment are of this kind by construction. For a weight $w$ on
$[0,\infty)$, $\mathcal E^w_{\mathrm{cov}}(L)$ denotes the space of covariant
kernels with
\[
\|K_\omega(x,y)\|\le h(x-y),\qquad \sum_{z\in L-L}h(z)\,w(|z|)<\infty,
\]
the dominating profile $h$ being independent of $\omega$. The weights
of interest are the polynomial ones, $w(t)=(1+t)^s$, and the
stretched-exponential ones, $w(t)=e^{\lambda t^\alpha}$ with
$0<\alpha<1$; exponential weights are excluded, and must be
(Remark~\ref{rem:weights}). Every $K\in\mathcal E^w_{\mathrm{cov}}(L)$
acts as a bounded operator $\pi_\omega(K)$ on
$\ell^2(\omega)\otimes\Cx^n$ for each $\omega$.

\medskip\noindent\textbf{Theorem A} (Wiener's lemma for quasicrystals;
Corollary~\ref{cor:wiener}, Proposition~\ref{prop:dictionary},
Corollary~\ref{cor:weightclasses}, Proposition~\ref{prop:physicalweights}).
\emph{Let $L$ be repetitive and FLC, and let $w$ be polynomial or stretched-exponential.
Then $\mathcal E^w_{\mathrm{cov}}(L)$ is a unital Banach $*$-algebra
under kernel composition, inverse-closed in $\Bd(\ell^2(\omega)\otimes\Cx^n)$
for every $\omega$, and stable under holomorphic functional calculus.
Explicitly: if $K\in\mathcal E^w_{\mathrm{cov}}(L)$ and $\pi_\omega(K)$
is invertible for \emph{one} $\omega\in\trans$, then $\pi_{\omega'}(K)$
is invertible for \emph{every} $\omega'$, $K^{-1}$ belongs to
$\mathcal E^w_{\mathrm{cov}}(L)$, and a single profile dominates
$(K^{-1})_{\omega'}$ for all $\omega'$ simultaneously.}

\medskip
Theorem A delivers the statements that the electronic-structure
literature uses (\S\ref{sec:consequences}). For an overlap matrix
$S=\mathbf 1+A$ with $A\in\mathcal E^w_{\mathrm{cov}}(L)$, bounded below
by $\delta>0$ on one configuration, $S$ is bounded below on every
configuration and $S^{-1},S^{1/2},S^{-1/2}\in\mathcal E^w_{\mathrm{cov}}(L)$
with hull-uniform profiles (Corollary~\ref{cor:overlap}): this is the
decay of the L\"owdin orthogonalization on a quasicrystalline point set.
For a self-adjoint Hamiltonian $H\in\mathcal E^w_{\mathrm{cov}}(L)$ and $T>0$, the
Fermi--Dirac density matrix $f_T(H)$ has hull-uniform decay in the same
class with no gap assumption. Both statements are stable under
sufficiently small self-adjoint perturbations in the weighted norm,
with positivity retained for $S$. They extend to a uniform magnetic
field in the magnetically covariant kernel class
(Remark~\ref{rem:magnetic}), and the framework covers
decorated Delone sets (Remark~\ref{rem:decorated}), hence
$\Z^d$-subshifts and random binary alloys (Example~\ref{ex:random}).

Theorem A asserts membership; it gives no bound on the profile of
$K^{-1}$. The following theorem makes the polynomial regime explicit;
Theorem~\ref{thm:stretchednci} gives norm control at a fixed
stretched-exponential rate as well.

\medskip\noindent\textbf{Theorem B} (Norm-controlled inversion and
Fermi-projection bounds; Theorem~\ref{thm:nci},
Corollary~\ref{cor:ferminci}, Proposition~\ref{prop:nonci}). \emph{Let
$L$ be FLC, $w(t)=(1+t)^s$ with $s>0$, and let $\kappa$ be a growth
exponent of the return vectors,
$\#\big((L-L)\cap\bar B_\rho\big)\le C_\kappa(1+\rho)^\kappa$; one may
always take $\kappa\le N$, the rank of the group generated by $L-L$,
and $\kappa=d$ when $L$ is a Meyer set. Then $\mathcal E^w_{\mathrm{cov}}(L)$ admits norm-controlled
inversion in the associated groupoid C*-algebra:
whenever $K$ is invertible there,
\[
\|K^{-1}\|_{\mathcal E^w}
\le
h_{L,s}\big(
\|K\|_{\mathcal E^w},
\|K^{-1}\|_{\mathfrak C}
\big),
\]
where the differential estimate has parameter
$\theta=\kappa/(\kappa+s)$.
If $L$ is repetitive, or if the chosen configuration has
dense orbit, the C*-norm of the inverse equals
$\|\pi_\omega(K)^{-1}\|$. In particular, if
$H=H^*\in\mathcal E^w_{\mathrm{cov}}(L)$ has a spectral gap of
half-width $\Delta$ around the Fermi level in
$\mathrm{sp}_{\mathfrak C}(H)$, its Fermi projection satisfies}
\[
\|P_\omega(x,y)\|\le B\,(1+|x-y|)^{-s}
\qquad(\omega\in\trans,\ x,y\in\omega),
\]
\emph{with $B$ explicit in
$(\|H\|_{\mathcal E^w},\Delta,s,\kappa,C_\kappa)$ and the
geometric constants of $L$. The weight cannot be removed: for
$L=\Z^d$, norm control fails at $s=0$ by Nikolski's theorem
\cite{Nik}.}

\medskip
Theorem B makes the return-vector geometry visible quantitatively.
For Meyer sets, the return-vector count allows $\kappa=d$,
so the parameter $\theta=d/(d+s)$ in the differential estimate here
has the same form as that obtained from the
corresponding lattice counting argument.
This comparison concerns the present estimate; it does not
assert optimality of the control function or equality with
the best bounds available in periodic settings. For general FLC sets the guaranteed exponent
degrades to the rank $N$ (Remark~\ref{rem:ncidichotomy}).

\subsection*{The method}

Both theorems are proved by exhibiting
$\mathcal E^w_{\mathrm{cov}}(L)$ as a subalgebra of the reduced
$C^*$-algebra $\mathfrak C=C^*_r(\GL)$ of the Delone groupoid
$\GL=\{(\omega,x):x\in\omega\in\trans\}$, and by proving symmetry and
spectral invariance there. Three ingredients enter.

\emph{The address grading.} By Lagarias \cite{Lagarias}, the return
vectors $L-L$ generate a free abelian group of finite rank $N\ge d$.
Choose an address isomorphism $a$ onto $\Z^N$. The cocycle
$c(\omega,x):=a(x)$ has compact open bisection fibres
(Proposition~\ref{prop:cocycle}), and gives a topological $\Z^N$-grading
of $\mathfrak C$, with homogeneous norms identified with supremum
norms (Theorem~\ref{thm:grading}). The unweighted spectral-invariance
statement is an application of \cite[Thms.~2.4 and~2.8]{JM}.
For the weighted statement, a short spectral-radius transfer
argument from scalar Beurling-algebra theory is used
\cite{FGL}; this avoids imposing the shell-comparability condition
of \cite[Thm.~2.20]{JM}. On $\Z^N$, ordinary GRS already implies
the uniform GRS condition for finite generating sets, so no independent
relaxation of that condition is claimed. Proposition~\ref{prop:weightedSI}
records the resulting form convenient for exact physical weights.

\emph{The kernel dictionary.} The bisection property is exactly what
makes the map $\Phi\mapsto K^\Phi$, $K^\Phi_\omega(x,y)=\Phi(\omega-x,y-x)$,
an \emph{isometric} $*$-isomorphism of $\ell^{1,\nu}(\mathfrak C)$ onto
the algebra of covariant kernels dominated by $k\circ a$ with
$k\in\ell^{1,\nu}(\Z^N)$, the norm being the best dominating profile
(Proposition~\ref{prop:dictionary}). A configuration with dense orbit
has a faithful regular representation (Lemma~\ref{lem:densefaithful});
for repetitive $L$ this holds for every configuration, without requiring
aperiodicity or simplicity. Inverses therefore transport from a single
such configuration to the whole hull.

\emph{The metric comparison.} Lagarias's address characterization
of finite type \cite{Lagarias} compares address and Euclidean lengths
on return vectors. Explicit constants and their uniformity
over the hull are recorded in Lemma~\ref{lem:addcomp}.
Corollary~\ref{cor:weightclasses} identifies polynomial classes and
compares radial stretched-exponential scales, in general changing
the radial address rate. Instead, the exact pullback weight
$\widetilde\nu_w(g)=w(|\Psi g|)$ gives an isometric identification at
the prescribed physical rate
(Proposition~\ref{prop:physicalweights}). For quantitative estimates,
the supported degrees $a(L-L)$ are counted rather than all degrees of
$\Z^N$ whenever this gives a better exponent. The resulting differential
estimates permit application of the norm-controlled inversion criteria
of Samei--Shepelska \cite{SS}.

\subsection*{Relation to other work}

The algebraic starting point is the spectral-invariance theory for
topologically graded C*-algebras and partial actions developed in
\cite{JM}, together with the related Fell-bundle framework of
\cite{FJM,LP}. The kernel dictionary is a Delone realization of this
mechanism, with the bisection norms identifying the least
hull-uniform profile exactly. No new general unweighted Wiener theorem
for graded C*-algebras is claimed.

For infinite matrices, Baskakov \cite{Baskakov97},
Gr\"ochenig--Leinert \cite{GL}, and Fendler--Gr\"ochenig--Leinert
\cite{FGL08} provide relevant weighted and convolution-dominated
antecedents. K\"ohldorfer--Balazs \cite{KB25} treat operator-valued
matrix algebras on relatively separated sets; their
convolution-dominated construction in Section~5 is indexed by
$\Z^d$. The formulation here instead concerns continuous covariant
families with varying Delone index sets and a single summable
profile over all return vectors in the hull.

Norm-controlled inversion is also an established theory
\cite{GK2,GKII,SS,ShSu}. In particular, Flores
\cite[Theorem~1.4 and Section~5.4]{Flores25} proves it for polynomially
weighted cross-section algebras of Fell bundles over discrete groups.
Applied to the address Fell bundle, that result already supplies
existence of polynomial norm control. Here the additional information
is the explicit return-vector-dependent differential estimate and
its interpretation in the physical common-profile norm, rather than
existence of norm control for the first time.

For \'etale groupoids, Austad--Ortega \cite{AO} study hermitian
Banach $*$-algebras, while Austad--Ortega--Palmstr\o m
\cite{AOP26} develop weighted spectral invariance and $K$-theory
results under strong subexponential growth. Their
Lemmas~4.5--4.6 are a close methodological antecedent to the
stretched-exponential argument below. Their weighted fibre norms
place the supremum over units outside the fibre sum. The norm here
instead uses the sum of the degreewise suprema,
\[
  \|\Phi\|_{1,\nu}=\sum_g\nu(g)\|\widetilde P_g(\Phi)\|.
\]
At $p=1$ this dominates the corresponding weighted $I$-norm. The distinction is the order of
summation and supremum, not whether weights can be used.

Accordingly, the contribution is a quantitative Delone specialization:
an isometric common-profile dictionary, exact physical weights, and
inversion estimates governed by the growth of the supported return
vectors. No optimality claim is made for the control functions. The Fermi and
overlap conclusions are applications of this algebraic control,
complementary to localization estimates in \cite{BBR,CO,OTC}; they
are not claims of first localization results for aperiodic systems.
Likewise, the $K$-theory isomorphism follows from spectral invariance
and density. It concerns representatives in weighted algebras, not
a computation of new $K$-groups or gap-label sets
\cite{Bost,BHZ,Kellendonk,PSB}.

The paper does not treat general Fell-bundle coefficients over the
Delone groupoid, fixed exponential-weight spectral invariance,
infinite-local-complexity tilings, or optimal control functions.


Section~\ref{sec:setting} introduces Delone sets, their hull and
transversal, the groupoid $\GL$, and the standing conventions.
Section~\ref{sec:cocycle} constructs the address cocycle and establishes
the metric comparison. Section~\ref{sec:main} proves the grading theorem
and establishes symmetry and spectral invariance of
$\ell^{1,\nu}(\mathfrak C)$. Section~\ref{sec:kernels} develops the
kernel dictionary. Finally, Section~\ref{sec:consequences} derives
Theorem~A and its corollaries, proves Theorem~B and stretched-exponential
norm control, and discusses periodic and aperiodic obstructions at the
unweighted endpoint.

\section{Preliminaries}\label{sec:setting}

\subsection{Delone sets, hulls, transversals and the Delone groupoid}

\begin{definition}[Delone, FLC, repetitive]\label{def:delone}
Fix $d\ge1$ and $r,R>0$. A set $L\subset\R^d$ is called
\begin{enumerate}[label=\emph{(\roman*)},leftmargin=2.4em,itemsep=2pt]
\item \emph{$(r,R)$-Delone} if every open ball of radius $r$ meets $L$
in at most one point and every closed ball of radius $R$ meets $L$ in at
least one point;
\item of \emph{finite local complexity} (FLC) if for every $\rho>0$ the
collection of patches $\{(L-x)\cap B_\rho : x\in L\}$ is finite or equivalently, if $L-L$ is closed and discrete;
\item \emph{repetitive} if, for every $\rho>0$ and every centred patch
$P=(L-x)\cap\bar B_\rho$ with $x\in L$, the occurrence set
$\{y\in L:(L-y)\cap\bar B_\rho=P\}$ is relatively dense;
occurrence means exact patch agreement, including the absence of extra points;
\item \emph{aperiodic} if $L-t=L$ implies $t=0$ (under repetitivity this
is equivalent to aperiodicity of every element of the hull).
\end{enumerate}
\end{definition}
Throughout, $B_\rho(p)$ and $\bar B_\rho(p)$ denote the open and the
closed ball of radius $\rho$ centred at $p$, and
$B_\rho:=B_\rho(0)$, $\bar B_\rho:=\bar B_\rho(0)$.

\begin{definition}[Local matching topology, hull, transversal]\label{def:hull}
For a Delone set $L\subset\R^d$, $\rho>0$ and $\varepsilon\in(0,1)$ put
\[
U_{\rho,\varepsilon}(L):=\bigl\{L'\ \text{Delone}:\ \exists\, x,x'\in B_\varepsilon(0)\ \text{with}\
(L-x)\cap \bar B_\rho=(L'-x')\cap \bar B_\rho\bigr\}.
\]
The sets $U_{\rho,\varepsilon}(L)$ form a neighbourhood base of a Hausdorff
topology on the set of Delone sets, the \emph{local matching topology}
\cite[\S5.4]{BaakeGrimm13}, \cite[\S2]{LenzStollmann03}. (The topology
is defined through this base rather than through the customary formula
$d(L,L')=\inf\{\varepsilon:\ L'\in U_{1/\varepsilon,\varepsilon}(L)\}$, which
fails the triangle inequality and must be truncated to yield a metric; see
\cite[Rem.~5.6]{BaakeGrimm13}.) The \emph{continuous hull} is
\[
\hull=\hull(L):=\overline{\{L-t:\ t\in\R^d\}},
\]
the closure in this topology, and the \emph{canonical transversal} is
$\trans:=\{\omega\in\hull:\ 0\in\omega\}$. Translation $(t,\omega)\mapsto\omega-t$
defines an action of $\R^d$ on $\hull$, and $\trans$ is closed in $\hull$.
\end{definition}

\begin{proposition}[Standard facts about the hull]\label{prop:hullfacts}
Let $L$ be FLC. Then:
\begin{enumerate}[label=\emph{(H\arabic*)},leftmargin=2.8em,itemsep=2pt]
\item\label{H1} $\hull$ is compact and metrizable, and the $\R^d$-action
$(t,\omega)\mapsto\omega-t$ is jointly continuous;
\item\label{H2} $\trans$ is closed in $\hull$, hence compact, and totally
disconnected;
\item\label{H3} if $L$ is repetitive, the $\R^d$-action on $\hull$ is minimal;
\item\label{H4} if $L$ is repetitive and aperiodic, no $\omega\in\hull$
has a non-zero period, i.e.\ the $\R^d$-action on $\hull$ is free.
\end{enumerate}
\end{proposition}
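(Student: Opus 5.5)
The plan is to prove the four items in order, reducing everything to FLC through finitely many patches. The main tool is the neighbourhood base $U_{\rho,\varepsilon}$.

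For (H1), I first show that every element of $\hull$ is again $(r,R)$-Delone. I also show that for each $\rho$ its $\rho$-patches (centred at points) belong to the finite FLC list $\mathcal P_\rho$ of patches of $L$. Both properties follow because any $\omega\in\hull$ agrees, on $\bar B_\rho$ and up to a shift by less than $\varepsilon$, with some translate $L-t$, and the properties are local. Compactness then comes from a diagonal argument. Given a sequence $L-t_k$, I write $t_k=x_k+u_k$ with $x_k\in L$ and $|u_k|\le R$. I pass to a subsequence so that $u_k$ converges, which is possible since $|u_k|\le R$. By FLC and a diagonal extraction, I may also assume the patches $(L-x_k)\cap\bar B_m$ are eventually constant for every $m$. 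The nested limit patches define a Delone set $\omega$, and $L-t_k\to\omega-u$ in the local matching topology. For general sequences in $\hull$ I approximate each term by a translate and use the same extraction. Metrizability follows because a truncated metric (e.g.\ $\min\{1/2,\cdot\}$ of the formula in the definition, suitably corrected) generates the same countable base at each point. Joint continuity is direct: if $\omega'\in U_{\rho,\varepsilon}(\omega)$ and $|t-t'|<\delta$, then $\omega'-t'\in U_{\rho-|t|-1,\varepsilon+\delta+\dots}(\omega-t)$, since matching on a large ball survives a small shift.

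For (H2), closedness of $\trans$ uses the fact that all points of sets in $\hull$ are $r$-separated. If $\omega_k\to\omega$ with $0\in\omega_k$, then matching on $\bar B_\rho$ with shifts less than $\varepsilon<r/2$ forces a point of $\omega$ within $2\varepsilon$ of $0$. Letting $\varepsilon\to0$ and using discreteness, this gives $0\in\omega$. Total disconnectedness comes from exhibiting a clopen base. For $\omega\in\trans$ and a patch $P\in\mathcal P_\rho$, set $C_P=\{\omega'\in\trans:\omega'\cap\bar B_\rho=P\}$. These sets are open in $\trans$: a point $x'$ of $\omega'$ near $0$ with a matching shifted patch must be $0$ itself by separation, so the shift is zero once $\varepsilon<r/2$. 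The sets $C_P$, $P\in\mathcal P_\rho$, are finitely many and partition $\trans$, hence each is closed. They shrink to points as $\rho\to\infty$.

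For (H3), given $\omega\in\hull$, I show that the orbit of $\omega$ is dense by showing that $L-t$ lies in its closure for every $t$. Fix $\rho$ and $\varepsilon$. The patch of $L$ around a point $x$ near $t$ of radius $\rho+R+1$ occurs relatively densely in $L$ by repetitivity, with some radius $R'$. Take a patch of $\omega$ of radius $R'+\rho+R+2$. This patch equals a shifted patch of $L$, so it contains an occurrence of the given patch. The corresponding translate $\omega-s$ then lies in $U_{\rho,\varepsilon}(L-t)$. The step I expect to be the main obstacle is the bookkeeping of radii and of the small shifts in this argument, and making sure exact patch agreement (absence of extra points) is preserved under the truncations.

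For (H4), suppose $\omega-t=\omega$ with $t\neq0$. Then every patch of $\omega$ of radius $\rho$ recurs at the translate by $t$. By (H3), $L$ lies in the orbit closure of $\omega$, so every patch of $L$ appears in $\omega$ up to small shifts, and by FLC we may take them exactly. The idea is to transport the period $t$ to $L$: patches of $L$ agree with their $t$-translates on all balls. Concretely, for each $\rho$ the patch $(L-x)\cap\bar B_\rho$ equals a patch of $\omega$, which equals its $t$-translate in $\omega$. Hence $(L-x)\cap\bar B_\rho$ and $(L-x-t)\cap\bar B_{\rho-|t|}$ are compatible. Letting $\rho\to\infty$ gives $L-t=L$, which contradicts aperiodicity.
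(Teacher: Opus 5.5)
Your proposal is correct. For (H2) and (H4) it follows the paper's argument.

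\emph{(H2).} This is exactly Lemma~\ref{lem:collapse}. Uniform discreteness forces the matching shift at the origin to vanish, so for each $\rho$ the finitely many cylinders $\{\omega'\in\trans:\omega'\cap\bar B_\rho=P\}$, $P\in\mathcal P_\rho$, form a clopen partition, and together they give a neighbourhood base.

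\emph{(H4).} You transport the period to $L$ through the orbit closure supplied by (H3), as the paper does. The paper's version is shorter. The set $\{\eta\in\hull:\eta-t=\eta\}$ is closed, because $\eta\mapsto\eta-t$ is continuous and $\hull$ is Hausdorff. It is also translation invariant, so it contains the orbit closure of $\omega$, hence $L$. Your comparison of $(L-x)\cap\bar B_\rho$ with $(L-x-t)\cap\bar B_{\rho-|t|}$ is the same argument written out on patches. One small correction: the exact, shift-free matching you use there comes from the enlarged-ball trick of Lemma~\ref{lem:patch}(i), not from FLC.

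\emph{(H1) and (H3).} Here you differ from the paper, which does not prove these but cites them as classical \cite{LagariasPleasants03,BaakeGrimm13}. Your direct arguments work.
\begin{itemize}
\item The diagonal extraction over the finite lists $\mathcal P_m$ gives sequential compactness of the translates. Passing to general sequences in $\hull$ uses only the approximate triangle property of the sets $U_{\rho,\varepsilon}$.
\item Your shift estimate gives joint continuity. In fact $\omega'-t'\in U_{\rho-|t|,\varepsilon+\delta}(\omega-t)$ whenever $\omega'\in U_{\rho,\varepsilon}(\omega)$ and $|t-t'|<\delta$.
\item In (H3) the bookkeeping you were worried about closes with exact agreement. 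Suppose $\omega$ coincides with $L-v$ on $\bar B_{R'+\rho+R}$. Any occurrence $y\in L\cap\bar B_{R'}(v)$ of the patch of radius $\rho+R$ at $x$ then yields the translate $\omega-(y-v)-(t-x)$, which agrees with $L-t$ exactly on $\bar B_\rho$.
\end{itemize}

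\emph{The one soft spot: metrizability.} You deduce compactness from sequential compactness, so metrizability or second countability must be established first; first countability alone would not suffice. Saying the metric is ``suitably corrected'' covers exactly the point where the naive formula fails the triangle inequality. Either cite the truncated metric of \cite{BaakeGrimm13}, as the paper does, or exhibit a countable base using FLC and apply Urysohn's metrization theorem.

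\emph{What each route buys.} Yours is self-contained, using only FLC and the neighbourhood base. The paper's is short, but it relies on the literature for (H1) and (H3).
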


\begin{proof}
\ref{H1} and \ref{H3} are classical
\cite[Thm.~3.1(i), Thm.~3.2]{LagariasPleasants03}, \cite[Prop.~5.4]{BaakeGrimm13};
in fact \ref{H3} is an equivalence. \ref{H2} is Lemma~\ref{lem:collapse}
below, whose proof uses only \ref{H1} and Lemma~\ref{lem:patch}.
For \ref{H4}: for fixed $t$ the set $\{\omega\in\hull:\omega-t=\omega\}$
is closed by \ref{H1}. If $\omega\in\hull$ had a period $t\neq0$, by
\ref{H3} there are $y_n$ with $\omega-y_n\to L$, each $\omega-y_n$ has
period $t$, so $L-t=L$, contradicting aperiodicity.
\end{proof}

\begin{remark}
(a) Under FLC the local matching topology agrees on $\hull$ with the Fell
topology on closed subsets of $\R^d$ \cite{BaakeLenz04}; this is not
needed here. (b) Repetitivity cannot be dropped in \ref{H4}:
$L=\Z\setminus\{0\}$ is aperiodic while $\Z\in\hull(L)$.
(c) That every $\omega\in\hull$ is again $(r,R)$-Delone is not cited;
it is Lemma~\ref{lem:patch}(i). That $\trans$ is a Cantor set under
repetitivity and aperiodicity is Corollary~\ref{cor:cantor}.
\end{remark}

\begin{definition}[Patches]\label{def:patch}
Let $L\subseteq\R^d$, $\rho>0$ and $x\in\R^d$. The \emph{$\rho$-patch of
$L$ at $x$} is the portion of $L$ seen in $\bar B_\rho(x)$, translated
back to the origin:
\[
P_\rho(L,x)\ :=\ (L-x)\cap\bar B_\rho\ \subseteq\ \bar B_\rho .
\]
Recording patches at the origin makes two of them equal precisely when
the corresponding portions of the point sets agree after a translation.
A set $P\subseteq\bar B_\rho$ \emph{occurs in $L$ at $y$} if
$P_\rho(L,y)=P$, and \emph{occurs in $L$} if it occurs at some
$y\in\R^d$; note that if $0\in P$ then necessarily $y\in L$. The
collection of such patches is denoted by
\[
\mathcal P_\rho\ =\ \mathcal P_\rho(L)\ :=\ \{P_\rho(L,x)\ :\ x\in L\}
\]
Each such patch contains $0$ and is finite as soon as $L$ is uniformly
discrete.
\end{definition}
\begin{remark}[Decorated Delone sets]\label{rem:decorated}
Fix a finite set $\mathcal A$ of \emph{decorations} (atomic species,
letters of a subshift, local data). A \emph{decorated point set} is a
pair $(L,\kappa)$ with $L\subset\R^d$ and $\kappa:L\to\mathcal A$;
translations act by $(L,\kappa)-t:=\big(L-t,\,\kappa(\cdot+t)\big)$,
and a \emph{decorated patch} is a restriction
$\big(L\cap\bar B_\rho,\,\kappa|_{L\cap\bar B_\rho}\big)$, translated
likewise. The decorated set is called Delone, respectively FLC, when
its underlying set $L$ has the corresponding property; both conditions
concern positions alone. Indeed, if $L$ is Delone its $\rho$-patches
are finite sets, and each of the finitely many
positional patches $P$ carries at most $|\mathcal A|^{\#P}$
decorations, and finiteness of decorated patches follows.
\emph{Repetitivity and aperiodicity, by contrast, are imposed on the
decorated object}: every decorated patch recurs within a uniform
radius, and $(L,\kappa)-t=(L,\kappa)$ forces $t=0$. Decorated
aperiodicity is a strictly weaker hypothesis than positional
aperiodicity, and this is the point of the notion: $L=\Z^d$ with an
aperiodic decoration is covered, which is how $\Z^d$-subshifts and
random binary alloys (Example~\ref{ex:random}) enter the framework.
The results extend verbatim to decorated sets when the local matching
topology in \S\ref{sec:setting} is defined using decorated patches. Two
patches match when, after a small translation, both their positions and
decorations agree. The hull and transversal then consist of decorated
configurations. Three observations justify this extension.

\emph{(1)} Lemmas~\ref{lem:patch} and~\ref{lem:collapse}, together with
\eqref{eq:domclopen}, remain valid as stated. Their proofs use
decorations only through the finiteness of $\mathcal P_\rho$. The metric
arguments, including the F{\o}lner sequence in the proof of
Proposition~\ref{prop:basic}, depend only on positions.

\emph{(2)} The group $\Gamma_L=\langle L-L\rangle$, the address map,
the cocycle $c(\cdot,x)=a(x)$, and the comparison
Lemma~\ref{lem:addcomp} also depend only on positions. Groupoid arrows
remain pairs consisting of a decorated configuration and a return
vector, so the weights and decay classes over $\Z^N$ are unchanged.

\emph{(3)} Decorations enter only through the unit space,
$\mathfrak C_0\cong C(\trans)$, where $\trans$ is the decorated
transversal. Decoration labels are not matrix indices, so the
coefficients $M_n(\Cx)$ in case (ii) remain available for internal
degrees of freedom such as orbitals and spin. Kernels may depend on the
configuration's decorations through local dependence condition (b).
This includes species-dependent hopping amplitudes, which form
continuous covariant families over the decorated hull.

Finally, decorations cannot be traded for vacancies: deleting the points of one color in general destroys the Delone property (a Bernoulli dilution of $\Z^d$ almost surely contains arbitrarily large empty balls), so for alloys the decorated framework is a necessity, not a convenience.

\end{remark}

\begin{definition}[Transversal groupoid]\label{def:groupoid}
Let $\trans$ be the transversal, i.e.\ the patterns $\omega$ with $0\in\omega$.
The \emph{transversal groupoid} is
\[
  \GL:=\bigl\{(\omega,x)\in\trans\times\R^d : x\in\omega\bigr\},
\]
with range, source, product, and inverse
\begin{align*}
  r(\omega,x)&=\omega, & s(\omega,x)&=\omega-x,\\
  (\omega,x)\cdot(\omega-x,y)&=(\omega,x+y), & (\omega,x)^{-1}&=(\omega-x,-x),
\end{align*}
the product being defined exactly when $s(\omega,x)=r(\omega-x,y)$.
The unit space is $\GL^{(0)}=\{(\omega,0):\omega\in\trans\}$, identified with
$\trans$ via $(\omega,0)\mapsto\omega$.
Also, the topology of this groupoid is generated by the basic sets
\[
U_{P,x}:=\bigl\{(\omega,x)\in\GL : \omega\cap \bar B_\rho=P\bigr\},
\]
for every $\rho\in(0,+\infty)$, $P\in\mathcal P_\rho$ and $x\in P$.
\end{definition}

\begin{remark}[Relation with Deaconu--Renault groupoids]\label{rem:DR}
In the following cases, transformation-groupoid and
partial-action descriptions are available.

\emph{(i) Dimension one.} Let $d=1$ (any rank $N$). For $\omega\in\trans$
put $x_1(\omega):=\min\big(\omega\cap(0,\infty)\big)$ and
$\sigma(\omega):=\omega-x_1(\omega)$. By uniform discreteness and
relative denseness, $r\le x_1(\omega)\le2R$, and $x_1$ is locally
constant on $\trans$ (Lemma~\ref{lem:collapse}), so $\sigma$ is a
homeomorphism of $\trans$ with inverse
$\omega\mapsto\omega-\max(\omega\cap(-\infty,0))$. Enumerating
$\omega=\{x_n(\omega)\}_{n\in\Z}$ with $x_0=0$, the map
$(\omega,x_n(\omega))\mapsto(\omega,n)$ is an isomorphism of \'etale
groupoids $\GL\cong\trans\rtimes_\sigma\Z$, the transformation
groupoid of $\sigma$ \cite{Deaconu95}. The counting cocycle
$(\omega,x_n)\mapsto n$ satisfies $r|n|\le|x_n|\le2R|n|$, which is
\eqref{eq:bilip} with $\Z$ in place of $\Z^N$; the address cocycle
$c(\omega,x_n)=a(x_n)=\sum_{k}a\big(x_{k+1}-x_k\big)$ is a
$\Z^N$-valued cocycle over $\sigma$.
In rank one, write $\Gamma_L=\ell\Z$ with $\ell>0$ and
choose the positive generator. The address and counting
cocycles agree exactly when every consecutive gap equals
$\ell$, equivalently when $L$ is a translate of $\ell\Z$.
Rank one alone does not imply this: for
$L=3\Z\cup(3\Z+1)$, one has $\Gamma_L=\Z$ but
$x_2=3$, so the address is $3$ while the counting index is $2$.
The groupoid isomorphism with a $\Z$-action therefore does not, by
itself, identify the counting-grading norm with the address-grading
common-profile norm: a fixed counting index can correspond to
several physical displacements across the hull.

\emph{(ii) Subsets of a lattice.} Let $L\subseteq\Gamma$ for a lattice
$\Gamma\subset\R^d$ (decorated lattices, vacancy models). Then
$\Gamma_L\subseteq\Gamma$ has finite index, $N=d$, and if
$\Gamma_L=\Z^d$ with the standard basis then $a=\mathrm{id}$ and
the comparison \eqref{eq:bilip} holds with $c_\pm=1$.
Choose $p\in L$ and replace $L$ by $L-p$.
This does not change its continuous hull or its return group,
and now $L-p\subseteq\Gamma_L$.
For the remainder of this paragraph, set
$\Gamma:=\Gamma_L$, so that the weight $\nu\circ a$ is
defined on the whole acting lattice.
The discrete hull
$\hull_\Gamma:=\overline{\{L-\gamma:\gamma\in\Gamma\}}\subseteq\{0,1\}^{\Gamma}$
carries the $\Gamma$-action by translation, $\trans=\{\omega\in\hull_\Gamma:0\in\omega\}$
is clopen in $\hull_\Gamma$ and meets every orbit, and
$\GL=(\hull_\Gamma\rtimes\Gamma)|_{\trans}$ is the reduction of the
transformation groupoid of that action to $\trans$;
equivalently, the transformation groupoid of the partial action of
$\Gamma$ on $\trans$ \cite{Abadie03,ExelBook}. Since
$\ell^{1,\nu}(\GL)=\chi_\trans\,\ell^{1,\nu}(\hull_\Gamma\rtimes\Gamma)\,\chi_\trans$
for the weight $\nu\circ a$ on $\Gamma$, and inverse-closedness passes
to corners, the \emph{unweighted} part of Theorem~\ref{thm:main}
also follows from the partial-action results of \cite{JM}.
For arbitrary GRS weights, one additionally uses
Proposition~\ref{prop:weightedSI}; the cited unweighted
partial-action theorem alone does not supply that weighted conclusion.

Related topological descriptions are available under the hypotheses
of \cite{BenedettiGambaudo03,SadunWilliams03}. In particular,
\cite[Thm.~2]{SadunWilliams03} treats tiling spaces built from
finitely many polyhedral prototiles meeting full-face to full-face,
counting distinct orientations as distinct tile types, and produces
a homeomorphism to a $d$-fold suspension of a $\Z^d$-subshift.
That statement is not a translation-equivariant conjugacy and does
not by itself identify the fixed physical profile norm used here.
No general Delone-to-suspension reduction is used; the needed metric
information is proved directly in Lemma~\ref{lem:addcomp}.
\end{remark}

\subsection{The topology of the transversal}
The next two lemmas isolate what this topology does on the transversal.
The first identifies bounded portions of hull elements as translates of $L$.
The second shows that small relative translations disappear on the
transversal, and that its topology is generated by exact agreement of patches.

\begin{lemma}[Patches]\label{lem:patch}
Let $L$ be an $(r,R)$-Delone set with FLC. Then:
\begin{enumerate}[label=\emph{(\roman*)},itemsep=2pt]
\item for every $\omega\in\hull$ and every $\rho>0$ there is
$v\in\R^d$ with
\[
\omega\cap\bar B_\rho=(L-v)\cap\bar B_\rho ;
\]
consequently every $\omega\in\hull$ is again $(r,R)$-Delone, and
$\omega-\omega\subseteq L-L$;
\item if moreover $\omega\in\trans$, then $v$ may be taken in $L$. Since
$0\in\omega$, this says that the $\rho$-patch of $\omega$ at the origin is
a $\rho$-patch of $L$; conversely $L-x\in\trans$ for every $x\in L$. Hence
\[
\{\,P_\rho(\omega,0)\ :\ \omega\in\trans\,\}\ =\ \mathcal P_\rho(L)
\qquad(\rho>0),
\]
a finite set by FLC. Moreover $\omega\subseteq L-L$.
\end{enumerate}
\end{lemma}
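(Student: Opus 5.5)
The plan is to unwind the definition of the hull as a closure in the local matching topology and use FLC only at the last step. Fix $\omega\in\hull$ and $\rho>0$. Since $\omega$ lies in the closure of the orbit $\{L-t\}$, the basic neighbourhood $U_{\rho',\varepsilon}(\omega)$ with $\rho'=\rho+1$ and any $\varepsilon\in(0,1)$ contains some $L-t$. By definition there are $x,x'\in B_\varepsilon(0)$ with
\[
(\omega-x)\cap\bar B_{\rho'}=(L-t-x')\cap\bar B_{\rho'} .
\]
Translating by $x$ gives $\omega\cap\bar B_{\rho'}(-x)=(L-v)\cap\bar B_{\rho'}(-x)$ with $v=t+x'-x$. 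Since $\bar B_\rho\subseteq\bar B_{\rho'}(-x)$ because $|x|<1$, intersecting with $\bar B_\rho$ yields (i). I would verify that the neighbourhood base is indexed so that this direction works, i.e.\ that "$L-t\in U_{\rho',\varepsilon}(\omega)$" is what the closure gives. This holds because the $U$'s are declared to be a neighbourhood base at each point.

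The consequences of (i) follow directly. Any open $r$-ball or closed $R$-ball lies in some $\bar B_\rho$, where $\omega$ agrees with the translate $L-v$. Since $L-v$ is $(r,R)$-Delone, so is $\omega$; this requires taking $\rho$ larger than the ball's extent, which is allowed. For $p,q\in\omega$, choose $\rho\ge\max(|p|,|q|)$. Then $p,q\in L-v$, so $p-q\in L-L$.

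For (ii), suppose $0\in\omega$ and apply (i) with the same $\rho$. Then $0\in(L-v)\cap\bar B_\rho$, so $v\in L$, and $P_\rho(\omega,0)=(L-v)\cap\bar B_\rho=P_\rho(L,v)\in\mathcal P_\rho(L)$. Conversely, for $x\in L$ we have $L-x\in\hull$ because it lies in the orbit, and $0\in L-x$, so $L-x\in\trans$ with $P_\rho(L-x,0)=P_\rho(L,x)$. This gives the equality of the two patch collections, and finiteness is exactly the FLC hypothesis. Finally, $\omega=\omega-0\subseteq\omega-\omega\subseteq L-L$ by (i).

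The only delicate point is the first step: the matching in $U_{\rho',\varepsilon}$ involves two small shifts $x,x'$, and the ball must be enlarged from $\rho$ to $\rho+1$ so that recentering by $x$ still covers $\bar B_\rho$. Everything else is bookkeeping.
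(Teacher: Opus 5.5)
Your proof is correct and follows the paper's argument essentially step for step. You enlarge the ball so that undoing the small shift still covers $\bar B_\rho$, obtain the Delone property and $\omega-\omega\subseteq L-L$ by choosing $\rho$ large, and get $v\in L$ from $0\in(L-v)\cap\bar B_\rho$; the paper enlarges by some $\eta<1$ rather than by $1$, which makes no difference. One harmless slip: translating $(\omega-x)\cap\bar B_{\rho'}$ back by $x$ gives agreement on $\bar B_{\rho'}(x)$, not $\bar B_{\rho'}(-x)$, but since $|x|<1$ you still have $\bar B_\rho\subseteq\bar B_{\rho+1}(x)$.
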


\begin{proof}
(i) Fix $\rho>0$ and choose $\eta\in(0,1)$. Since
$\omega\in\hull(L)$, there exist $t\in\R^d$ and
$u,u^{\prime}\in B_\eta$ such that
\[
(\omega-u)\cap\bar B_{\rho+\eta}
=
(L-t-u^{\prime})\cap\bar B_{\rho+\eta}.
\]
Translating both sides by $u$, the configurations $\omega$ and
$L-t-u^{\prime}+u$ agree on $\bar B_{\rho+\eta}+u$. Since
$\bar B_\rho\subseteq\bar B_{\rho+\eta}+u$, restriction to
$\bar B_\rho$ gives
\[
\omega\cap\bar B_\rho
=
(L-v)\cap\bar B_\rho,
\qquad v:=t+u^{\prime}-u.
\]

Uniform discreteness: if an open ball $B(p,r)$ met $\omega$ in two
distinct points, then taking $\rho\ge|p|+r$ the same ball would meet
the translate $L-v$ in two distinct points, contradicting
Definition~\ref{def:delone}. Relative denseness: given $p$, take
$\rho\ge|p|+R$; then $\bar B(p,R)\subseteq\bar B_\rho$ meets $L-v$,
and any such intersection point lies in $\omega$. Finally, if
$x,y\in\omega$ and $\rho\ge\max(|x|,|y|)$, then $x=\ell_1-v$ and
$y=\ell_2-v$ with $\ell_1,\ell_2\in L$, whence
$x-y=\ell_1-\ell_2\in L-L$.

(ii) If $0\in\omega$ then $0\in\omega\cap\bar B_\rho=(L-v)\cap\bar B_\rho$,
so $v\in L$ and $P_\rho(\omega,0)=\omega\cap\bar B_\rho=P_\rho(L,v)$, a
member of $\mathcal P_\rho(L)$. For the reverse inclusion, $L-x\in\trans$
for every $x\in L$, and $P_\rho(L-x,0)=P_\rho(L,x)$. Finiteness is FLC.
Finally, taking $\rho\ge|x|$ for $x\in\omega$ gives
$x\in L-v\subseteq L-L$.
\end{proof}

\begin{lemma}[No translation on the transversal]\label{lem:collapse}
Let $\omega,\omega'\in\trans$ and suppose that
\[
(\omega'+s)\cap\bar B_\rho\;=\;\omega\cap\bar B_\rho
\qquad\text{for some }|s|<\varepsilon ,
\]
with $0<\varepsilon\le\min\{r,\rho\}$. Then $s=0$. Consequently,
writing
\[
\trans_\rho(\omega):=\big\{\omega'\in\trans\ :\
\omega'\cap\bar B_\rho=\omega\cap\bar B_\rho\big\},
\]
the family $\{\trans_\rho(\omega)\}_{\rho>0}$ is a neighbourhood basis
at $\omega$ in $\trans$, each $\trans_\rho(\omega)$ is compact open, and
$\trans$ is a compact, metrizable, totally disconnected space.
\end{lemma}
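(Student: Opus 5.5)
The plan is to prove the rigidity statement first and then read off the topological consequences from it together with Lemma~\ref{lem:patch} and the neighbourhood base $U_{\rho,\varepsilon}$ of Definition~\ref{def:hull}.

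\emph{Rigidity.} Since $\omega'\in\trans$, we have $0\in\omega'$, so $s\in\omega'+s$. Because $|s|<\varepsilon\le\rho$, the point $s$ lies in $\bar B_\rho$. The hypothesis then gives $s\in\omega\cap\bar B_\rho\subseteq\omega$. We also have $0\in\omega$. By Lemma~\ref{lem:patch}(i), $\omega$ is $(r,R)$-Delone, so two distinct points of $\omega$ are at distance at least $r$; indeed, if $0<|s|<r$, the open ball of radius $r$ centred at $s/2$ would contain both $0$ and $s$. Since $|s|<\varepsilon\le r$, this forces $s=0$.

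\emph{Neighbourhood basis.} Let $\omega'\in\trans$ lie in a basic neighbourhood $U_{\rho,\varepsilon}(\omega)$. By definition there are $x,x'\in B_\varepsilon$ with
\[
(\omega-x)\cap\bar B_\rho=(\omega'-x')\cap\bar B_\rho .
\]
Put $s:=x-x'$ and take $\varepsilon$ small compared with $r$ and $\rho$. Translating by $x$ and restricting to a slightly smaller ball $\bar B_{\rho-\varepsilon}\subseteq\bar B_\rho+x$ yields
\[
(\omega'+s)\cap\bar B_{\rho-\varepsilon}=\omega\cap\bar B_{\rho-\varepsilon},\qquad |s|<2\varepsilon .
\]
The rigidity step, applied with the radius $\rho-\varepsilon$ and with $2\varepsilon\le\min\{r,\rho-\varepsilon\}$, gives $s=0$. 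Hence $\omega'\in\trans_{\rho-\varepsilon}(\omega)$. Conversely, $\trans_\rho(\omega)\subseteq U_{\rho,\varepsilon}(\omega)\cap\trans$ trivially, by taking $x=x'=0$. It follows that the sets $\trans_\rho(\omega)$ form a neighbourhood basis at $\omega$ in the relative topology.

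\emph{Openness.} For $\omega''\in\trans_\rho(\omega)$ we have $\trans_\rho(\omega'')=\trans_\rho(\omega)$, so $\trans_\rho(\omega)$ is a neighbourhood of each of its points and is therefore open.

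\emph{Closedness and compactness.} By Lemma~\ref{lem:patch}(ii), the sets $\trans_\rho(\omega)$, as $\omega$ ranges over $\trans$, form a finite partition of $\trans$ indexed by $\mathcal P_\rho(L)$. Each part is therefore also closed, being the complement of a finite union of open parts. Since $\trans$ is compact (it is closed in $\hull$, which is compact by \ref{H1}), each $\trans_\rho(\omega)$ is compact.

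\emph{Total disconnectedness and metrizability.} Given $\omega\neq\omega'$ in $\trans$, Hausdorffness provides $\rho$ with $\omega'\notin\trans_\rho(\omega)$. The clopen set $\trans_\rho(\omega)$ then separates $\omega$ from $\omega'$, so $\trans$ is totally disconnected. Metrizability is inherited from $\hull$ by \ref{H1}.

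The main obstacle is the bookkeeping in the neighbourhood-basis step. One has to shrink the radius to absorb the translations $x,x'$, and choose $\varepsilon$ small relative to both $r$ and $\rho$ so that the rigidity hypothesis $|s|<\min\{r,\rho\}$ actually applies. The rest is formal.
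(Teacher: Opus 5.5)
Everything you actually argue is correct and follows the paper's proof. The steps are the same: rigidity from $0\in\omega'$ and uniform discreteness, the sandwich of $\trans_\rho(\omega)$ between traces of basic neighbourhoods $U_{\rho',\varepsilon}(\omega)\cap\trans$, openness by re-centring the cylinder at any of its points, closedness from the finite partition given by Lemma~\ref{lem:patch}(ii), and separation of points by clopen cylinders. The one organisational difference is harmless. You get the nontrivial inclusion by shrinking the ball and invoking the rigidity statement. The paper instead runs the same discreteness argument directly on the enlarged ball: $-u\in\omega-u$ forces $v-u\in\omega'$, hence $u=v$.

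There is one step you assert rather than prove: that $\trans$ is closed in $\hull$. Compactness of $\trans$, and hence of each cylinder, rests on it. This fact is not available beforehand. The remark in Definition~\ref{def:hull} and item~\ref{H2} both defer to the present lemma, and the paper proves it inside this proof. The fix is short. Let $\omega\in\hull$ lie in the closure of $\trans$. Then for each small $\varepsilon>0$ some $\omega'\in\trans$ lies in $U_{1,\varepsilon}(\omega)$, so there are $x,x'\in B_\varepsilon$ with $(\omega-x)\cap\bar B_1=(\omega'-x')\cap\bar B_1$. Since $0\in\omega'$, the point $-x'$ lies on the right-hand side, so $x-x'\in\omega$ with $|x-x'|<2\varepsilon$. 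Thus $\omega$ has points arbitrarily close to $0$. Uniform discreteness of $\omega$ (Lemma~\ref{lem:patch}(i)) then forces $0\in\omega$. With this added, your proof is complete.
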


\begin{proof}
First consider the displayed hypothesis in the statement. Since
$0\in\omega'$, it follows that $s\in\omega'+s$. The inequality
$|s|<\varepsilon\le\rho$ therefore gives
$s\in\omega\cap\bar B_\rho$. Both $0$ and $s$ belong to
$\omega\cap B_r$, so uniform discreteness implies $s=0$.

To compare the neighbourhood systems, fix $\rho>0$ and
$0<\varepsilon<\min\{1,r/2\}$. The following inclusions hold:
\begin{equation}\label{eq:transversal-neighbourhoods}
U_{\rho+\varepsilon,\varepsilon}(\omega)\cap\trans
\subseteq
\trans_\rho(\omega)
\subseteq
U_{\rho,\varepsilon}(\omega)\cap\trans .
\end{equation}
The second inclusion follows by taking both translations equal to
zero.

For the first inclusion, let $\omega'\in\trans$ and choose
$u,v\in B_\varepsilon$ such that
\[
(\omega-u)\cap\bar B_{\rho+\varepsilon}
=
(\omega'-v)\cap\bar B_{\rho+\varepsilon}.
\]
Since $0\in\omega$, the point $-u$ belongs to the left-hand side.
Consequently $v-u\in\omega'$. But $0\in\omega'$ and
$|v-u|<2\varepsilon<r$, so uniform discreteness gives $u=v$.
Undoing this common translation, the configurations agree on
$\bar B_{\rho+\varepsilon}+u$, which contains $\bar B_\rho$.
Thus $\omega'\in\trans_\rho(\omega)$.

The inclusions \eqref{eq:transversal-neighbourhoods} show that the
sets $\trans_\rho(\omega)$ form a neighbourhood basis. Each is
open: the same first inclusion applies at every configuration
belonging to that cylinder. For fixed $\rho$,
Lemma~\ref{lem:patch}(ii) gives a finite partition of $\trans$
into these open cylinders. Hence every cylinder is also closed.
Distinct configurations differ on some bounded ball, so this
clopen basis separates points and $\trans$ is totally disconnected.

For completeness, $\trans$ is closed in $\hull$. Indeed, let
$\omega_n\to\omega$ with $0\in\omega_n$. By local matching, for all
sufficiently large $n$ there exist $u_n,v_n\to0$ such that
\[
(\omega_n-u_n)\cap\bar B_1
=
(\omega-v_n)\cap\bar B_1 .
\]
Then $-u_n$ belongs to the left-hand side, and therefore
$v_n-u_n\in\omega$. These points converge to $0$; since $\omega$
is uniformly discrete and hence closed, $0\in\omega$.

Finally, $\hull$ is compact and metrizable by
Proposition~\ref{prop:hullfacts}\ref{H1}. Its closed subspace
$\trans$ is therefore compact and metrizable, and each clopen
cylinder is compact.
\end{proof}

\begin{proposition}[Basic structure]\label{prop:basic}
Let $L$ be FLC. Then $\GL$ is a Hausdorff, étale, ample, second
countable groupoid with compact unit space $\trans$, and it is
amenable. If $L$ is repetitive, $\GL$ is minimal. If $L$ is also
aperiodic, $\GL$ is principal, and consequently $\Cst_r(\GL)$ is simple.
\end{proposition}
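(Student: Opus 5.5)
The plan is to verify each property directly from the basic sets $U_{P,x}$ of Definition~\ref{def:groupoid} and the clopen cylinders of Lemma~\ref{lem:collapse}, and then to quote standard groupoid facts for amenability and simplicity.

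\emph{Topological structure.} First I will show that each $U_{P,x}$ is a compact open bisection. The range map restricted to $U_{P,x}$ is $(\omega,x)\mapsto\omega$, a bijection onto the cylinder $\{\omega\in\trans:\omega\cap\bar B_\rho=P\}$, which is clopen and compact by Lemma~\ref{lem:collapse}. The source map is $(\omega,x)\mapsto\omega-x$, and it maps $U_{P,x}$ onto the cylinder determined by $P-x$ at a slightly smaller radius. To see that $s$ is continuous and open there, I will check that agreement of $\omega,\omega'$ on $\bar B_{\rho'}$ with $\rho'\ge\rho'' +|x|$ forces agreement of $\omega-x,\omega'-x$ on $\bar B_{\rho''}$, and conversely.

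From this I obtain the following consequences.
\begin{itemize}
\item Multiplication and inversion are continuous. On basic sets they are given by $U_{P,x}\cdot U_{Q,y}\subseteq U_{P',x+y}$ for a suitable larger patch $P'$, and $U_{P,x}^{-1}=U_{P-x,-x}$ up to shrinking the radius.
\item $\GL$ is étale, since $r$ is a local homeomorphism.
\item $\GL$ is ample, since it has a basis of compact open bisections.
\item $\GL$ is second countable. The basis is countable because each $\mathcal P_\rho$ is finite and it suffices to take $\rho\in\N$.
\item $\GL$ is Hausdorff. Two distinct arrows $(\omega,x)\neq(\omega',x')$ either have $x\ne x'$, and then the basic sets with distinct second coordinates are disjoint, or have $\omega\ne\omega'$, and then they are separated by disjoint cylinders.
\end{itemize}
The unit space $\trans$ is compact by Lemma~\ref{lem:collapse}.

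\emph{Amenability.} This is the step I expect to be the main technical point. I would first try the cocycle route. The map $c(\omega,x)=x\in\R^d$ is a continuous homomorphism into an amenable group. Its kernel is the unit space, because $x=0$ forces the arrow to be a unit. A groupoid with a continuous cocycle into an amenable group whose kernel is amenable is itself amenable, by Renault's result (see also Anantharaman-Delaroche--Renault). Since the kernel here is just a compact space, it is trivially amenable, so $\GL$ is amenable.

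As an alternative, one can observe that $\GL$ is the reduction to the transversal of the transformation groupoid $\hull\rtimes\R^d$. That groupoid is amenable because $\R^d$ is amenable, and amenability passes to reductions to abstract transversals, giving an equivalent groupoid.

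A hands-on backup is also available. Take a Følner sequence of balls $B_n$ and set
\[
m_n^\omega=\frac{1}{\#(\omega\cap B_n)}\sum_{x\in\omega\cap B_n}\delta_{(\omega,x)}.
\]
Continuity of $m_n^\omega$ in $\omega$ follows from local constancy of the patch $\omega\cap B_n$, by Lemma~\ref{lem:collapse}. Approximate invariance follows from the Delone bounds, which give
\[
\frac{\#(\omega\cap(B_n\triangle(B_n-x)))}{\#(\omega\cap B_n)}\to0
\]
uniformly for $|x|$ bounded, since boundary volume is $o$ of bulk volume and point counts are comparable to volumes with constants depending only on $r$ and $R$.

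\emph{Minimality, principality, simplicity.} Orbits of $\GL$ are $\{\omega-x:x\in\omega\}$, that is, the $\R^d$-orbit of $\omega$ intersected with $\trans$. If $L$ is repetitive, the $\R^d$-action is minimal by Proposition~\ref{prop:hullfacts}\ref{H3}, so every orbit in $\hull$ is dense. To pass density to the transversal, fix a cylinder $\trans_\rho(\omega')$. An $\R^d$-translate $\omega-t$ matches the patch of $\omega'$ up to a small shift. Correcting by that shift, which is possible because $0\in\omega'$, produces a point $x\in\omega$ with $\omega-x\in\trans_\rho(\omega')$. Hence $\GL$ is minimal.

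The isotropy at $\omega$ consists of arrows $(\omega,x)$ with $\omega-x=\omega$. If $L$ is also aperiodic, Proposition~\ref{prop:hullfacts}\ref{H4} forces $x=0$, so $\GL$ is principal.

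Finally, a minimal, principal (hence effective), Hausdorff étale groupoid has simple reduced C*-algebra, by the standard result of Renault, or Brown--Clark--Farthing--Sims in the ample case.
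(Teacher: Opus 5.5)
Your proposal is correct. Minimality, principality and simplicity are argued as in the paper: enlarge the ball to absorb the small shift, use Proposition~\ref{prop:hullfacts}\ref{H4}, then apply the minimal plus topologically principal criterion. The differences lie in how the topology is handled and in how amenability is obtained.

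\textbf{Topology.} The paper first proves that the cylinder topology coincides with the topology induced from $\trans\times\R^d$, using that $L-L$ is closed and discrete and that each $\dom_x$ is clopen. Hausdorffness and continuity of the operations are then immediate, and the identification is reused later, e.g.\ in Proposition~\ref{prop:cocycle} and Lemma~\ref{lem:transport}. You instead verify everything directly on the basic sets, which works. Two of your descriptions are inaccurate, though:
\begin{itemize}
\item $s(U_{P,x})=\{\eta\in\trans:\eta\cap\bar B_\rho(-x)=P-x\}$ is a finite union of cylinders of radius $\rho+|x|$, not the cylinder of $P-x$ at a smaller radius.
\item $U_{P,x}^{-1}$ is only contained in the shrunken basic set, not equal to it.
\end{itemize}
Your two-sided agreement check is what actually gives openness of $s|_{U_{P,x}}$ and continuity of inversion, so nothing breaks.

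\textbf{Amenability.} The paper gives a self-contained invariant-density argument with a Lipschitz cutoff and an explicit $O(1/n)$ rate. Your third route is the same argument with a sharp cutoff. That is legitimate, because the second coordinate is locally constant on $\GL$. Your first two routes are shorter but rest on heavier results.
\begin{itemize}
\item \emph{Cocycle route.} The target $\R^d$ is not discrete, so you need the general form of the amenable-kernel criterion, not the elementary discrete-group version. In this instance the required skew-product amenability can be seen directly: the skew product $\GL\times_c\R^d$ is the proper equivalence relation of the local homeomorphism $(\omega,t)\mapsto\omega+t$ from $\trans\times\R^d$ onto $\hull$.
\item \emph{Equivalence route.} You need $\trans$ to be an abstract transversal of $\hull\rtimes\R^d$, i.e.\ the local product structure of $\hull$. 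You also need the topology of the reduction $(\hull\rtimes\R^d)|_{\trans}$ to agree with the cylinder topology, which is precisely the paper's Step~1.
\end{itemize}
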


\begin{proof}
For $x\in\R^d$ write $\dom_x:=\{\omega\in\trans:x\in\omega\}$; for
$P\in\mathcal P_\rho$ write $\trans_\rho(P)$ for the corresponding
member of the partition of Lemma~\ref{lem:collapse} and
$U_{P,x}:=\trans_\rho(P)\times\{x\}$ for $x\in P$, in accordance with
Definition~\ref{def:groupoid}.

\medskip\noindent\textbf{Step 1: the topology of $\GL$.}
For $\rho\ge|x|$ one has
\begin{equation}\label{eq:domclopen}
\dom_x=\bigsqcup\big\{\trans_\rho(P)\ :\ P\in\mathcal P_\rho,\ x\in P\big\},
\end{equation}
a finite union of clopen sets whose complement is the union of the
remaining members of the finite partition; hence \emph{$\dom_x$ is
clopen in $\trans$}.

By Lemma~\ref{lem:patch}(ii), $\GL\subseteq\trans\times(L-L)$, and
$L-L$ is closed and discrete by FLC. In the topology induced from
$\trans\times\R^d$ the second coordinate is therefore locally constant:
choosing $\delta>0$ with $B(x,\delta)\cap(L-L)=\{x\}$,
\[
\big(V\times B(x,\delta)\big)\cap\GL=(V\cap\dom_x)\times\{x\}
\qquad (V\subseteq\trans\text{ open}).
\]
Combining with Lemma~\ref{lem:collapse} and \eqref{eq:domclopen}, the
sets $U_{P,x}$ form a basis of this topology; that is, the cylinder
topology of Definition~\ref{def:groupoid} \emph{is} the topology induced
from $\trans\times\R^d$. In particular $\GL$ is Hausdorff. It is second
countable, a countable basis being $\{U_{P,x}:\rho\in\N,\
P\in\mathcal P_\rho,\ x\in P\}$, each $\mathcal P_\rho$ being finite and
each $P$ finite.

Continuity of the operations is now immediate: on $U_{P,x}$ the second
coordinate is constant, inversion is $(\omega,x)\mapsto(\omega-x,-x)$
with $\omega\mapsto\omega-x$ continuous by Proposition~\ref{prop:hullfacts}\ref{H1}, and multiplication
$\big((\omega,x),(\omega-x,y)\big)\mapsto(\omega,x+y)$ is the identity
in the first coordinate and constant in the second. The unit space is
$\{(\omega,0):\omega\in\trans\}$, identified with $\trans$; it is
compact by Lemma~\ref{lem:collapse}.

\medskip\noindent\textbf{Step 2: étale.}
Fix $\rho$, $P\in\mathcal P_\rho$ and $x\in P$, and note
$\trans_\rho(P)\subseteq\dom_x$. The map
$r|_{U_{P,x}}:(\omega,x)\mapsto\omega$ is a bijection onto
$\trans_\rho(P)$, continuous with continuous inverse
$\omega\mapsto(\omega,x)$, hence a homeomorphism onto a clopen subset of
$\trans$. Since $x\in\eta$ and $0\in\eta$ imply $0,-x\in\eta-x$, the
translation $\eta\mapsto\eta-x$ is a homeomorphism of $\dom_x$ onto
$\dom_{-x}$ by \ref{H1}, with inverse $\eta\mapsto\eta+x$; therefore
$s|_{U_{P,x}}:(\omega,x)\mapsto\omega-x$ is a homeomorphism of
$U_{P,x}$ onto $\trans_\rho(P)-x$, which is clopen in the clopen set
$\dom_{-x}$, hence clopen in $\trans$. Thus $r$ and $s$ are local
homeomorphisms and $\GL$ is étale.

\medskip\noindent\textbf{Step 3: ample.}
Each $U_{P,x}$ is a bisection because its fixed second coordinate means
that each arrow is determined by its range and by its source. It is
compact, as the range map identifies it with the clopen subset
$\trans_\rho(P)$ of the compact space $\trans$. By Step~1, these sets
form a basis of compact open bisections for $\GL$. Thus $\GL$ is ample,
or equivalently, étale with totally disconnected unit space
(Lemma~\ref{lem:collapse}).

\medskip\noindent\textbf{Step 4: amenability.}
The \emph{topological approximate invariant density} criterion for
amenability of an étale groupoid is verified here
\cite[Def.~2.2.8 and Prop.~2.2.13]{ADR}; equivalently, the continuous-probability-mean definition in
\cite[Def.~4.1.2]{Sims18}. It suffices to construct a sequence
$(g_n)\subset C_\co(\GL)$, $g_n\ge0$, with
$\sum_{\eta\in\GL^{u}}g_n(\eta)\le1$ for every $u\in\trans$ and with
\begin{equation}\label{eq:folner}
\sum_{\eta\in\GL^{u}}g_n(\eta)\longrightarrow1
\ \ \text{uniformly in }u\in\trans,
\qquad
\sum_{\eta\in\GL^{r(\gamma)}}\big|g_n(\gamma^{-1}\eta)-g_n(\eta)\big|
\longrightarrow0
\end{equation}
uniformly on compact subsets of $\GL$, where $\GL^{u}:=r^{-1}(u)$
(the unit space being compact, uniformity on compacta of $\trans$
is uniformity on $\trans$; the functions $g_n$ below satisfy the first sum
$=1$ exactly, so the boundedness clause holds automatically).
Indeed, $g_n|_{\GL^u}$ then gives a probability measure on each
range fibre; continuity of these systems follows from the continuity
of $g_n$ and the counting Haar system.

Let $\varphi_n(z):=\max\{0,\min\{1,\,n+1-|z|\}\}$, so that $\varphi_n$
is $1$-Lipschitz with values in $[0,1]$, equals $1$ on $\bar B_n$ and
vanishes on $\{|z|\ge n+1\}$. Put
\[
m_n(\omega):=\sum_{z\in\omega}\varphi_n(z),
\qquad
g_n(\omega,x):=\frac{\varphi_n(x)}{m_n(\omega)} .
\]
Each sum is finite by Lemma~\ref{lem:patch}(i), and
$m_n(\omega)\ge\varphi_n(0)=1$, so $g_n$ is well defined. Since
$m_n(\omega)$ depends only on $\omega\cap\bar B_{n+1}$, it is locally
constant on $\trans$ by Lemma~\ref{lem:collapse}; hence $g_n$ is
continuous. Its support is contained in
$\bigcup\{\dom_x\times\{x\}:x\in P,\ P\in\mathcal P_{n+1}\}$, a finite
union of compact sets, so $g_n\in C_\co(\GL)$ and $g_n\ge0$.

Since $\GL^{\omega}=\{(\omega,x):x\in\omega\}$, the first condition in
\eqref{eq:folner} holds \emph{exactly}:
$\sum_{\eta\in\GL^{\omega}}g_n(\eta)
=m_n(\omega)^{-1}\sum_{x\in\omega}\varphi_n(x)=1$ for every $n$ and
every $\omega$.

For the second, fix $\gamma=(\omega,y)\in\GL$. For
$\eta=(\omega,x)\in\GL^{r(\gamma)}=\GL^{\omega}$ one has
$\gamma^{-1}=(\omega-y,-y)$ and $\gamma^{-1}\eta=(\omega-y,\,x-y)$,
while $m_n(\omega-y)=\sum_{x\in\omega}\varphi_n(x-y)$. Writing
$f(x):=\varphi_n(x-y)$ and $h(x):=\varphi_n(x)$ as functions on
$\omega$, with $F:=\|f\|_{\ell^1(\omega)}$ and
$H:=\|h\|_{\ell^1(\omega)}$, the identity
$\frac fF-\frac hH=\frac{f-h}{H}+\frac{f\,(H-F)}{FH}$ together with
$|H-F|\le\|f-h\|_1$ gives
\[
\sum_{\eta\in\GL^{r(\gamma)}}\big|g_n(\gamma^{-1}\eta)-g_n(\eta)\big|
=\Big\|\frac fF-\frac hH\Big\|_{\ell^1(\omega)}
\ \le\ \frac{2\,\|f-h\|_{\ell^1(\omega)}}{m_n(\omega)} .
\]

\emph{Denominator (relative denseness).} For $n\ge2R$, the balls
$\bar B(x,R)$ with $x\in\omega\cap\bar B_n$ cover
$\bar B_{n-R}$. Volume comparison therefore gives
\[
m_n(\omega)\ge\#(\omega\cap\bar B_n)
\ge\left(\frac{n-R}{R}\right)^d
\ge\left(\frac n{2R}\right)^d.
\]
This lower bound is uniform in $\omega$.

\emph{Numerator (uniform discreteness).} The summand
$|\varphi_n(x-y)-\varphi_n(x)|$ is at most $\min(1,|y|)$ and vanishes
unless $n-|y|<|x|<n+1+|y|$: below that range both values equal $1$,
above it both vanish. The points of $\omega$ in that annulus are
$2r$-separated, so the balls $B(x,r)$ around them are disjoint and
contained in a concentric annulus of thickness $1+2|y|+2r$; comparing
volumes and using $b^d-a^d\le d\,b^{d-1}(b-a)$,
\[
\|f-h\|_{\ell^1(\omega)}\ \le\
d\,r^{-d}\,(n+1+|y|+r)^{d-1}\,(1+2|y|+2r).
\]

A compact subset of $\GL$ meets only finitely many slices
$\dom_x\times\{x\}$ because its image under the continuous projection
to the discrete set $L-L$ is compact and therefore finite. Its second
coordinates are consequently bounded, so it lies in
$\{|y|\le\Lambda\}$ for some $\Lambda$. Combining the two bounds,
\[
\sum_{\eta\in\GL^{r(\gamma)}}\big|g_n(\gamma^{-1}\eta)-g_n(\eta)\big|
\ \le\ C(d,r,R,\Lambda)\,\frac1n
\qquad\text{whenever }|y|\le\Lambda ,
\]
uniformly in $\omega$. Thus \eqref{eq:folner} holds and $\GL$ is
topologically amenable; being second countable, Hausdorff and étale, it
is amenable, and $\Cst(\GL)=\Cst_r(\GL)$ is nuclear
\cite[Prop.~6.1.8, Cor.~6.2.14]{ADR}; see also
\cite[Thms.~4.1.4--4.1.5]{Sims18}. The general theory of groupoid
$\Cst$-algebras used here is that of \cite{Renault}.

\medskip\noindent\textbf{Step 5: minimality.}
Assume $L$ repetitive. Since $0\in\omega-t$ iff $t\in\omega$, the
$\GL$-orbit of $\omega\in\trans$ is
$\{\omega-x:x\in\omega\}=(\R^d\!\cdot\omega)\cap\trans$. Let
$\eta\in\trans$ and $\rho>0$; it remains to find $x\in\omega$ with
$\omega-x\in\trans_\rho(\eta)$. By Proposition~\ref{prop:hullfacts}\ref{H3} the orbit $\R^d\!\cdot\omega$ is
dense in $\hull$.
Using the enlarged-ball argument from the proof of
Lemma~\ref{lem:patch}, there exists $x\in\R^d$ such that
\[
(\omega-x)\cap\bar B_\rho=\eta\cap\bar B_\rho.
\]
Since $0\in\eta$, this equality implies $x\in\omega$.
Thus $\omega-x\in\trans_\rho(\eta)$, proving minimality.

\medskip\noindent\textbf{Step 6: principality and simplicity.}
An element $(\omega,x)$ of the isotropy satisfies
$\omega=r(\omega,x)=s(\omega,x)=\omega-x$, i.e.\ $x$ is a period of
$\omega$; by Proposition~\ref{prop:hullfacts}\ref{H4} the action is free, so $x=0$ and $\GL$ is principal,
hence topologically principal. Being second countable, locally compact,
Hausdorff and étale, minimal and topologically principal, $\GL$ has
simple $\Cst_r(\GL)$ \cite{BCFS}; by Step~4 and \cite[Prop.~6.1.8]{ADR} the full and
reduced algebras coincide, so $\Cst(\GL)$ is simple as well.
\end{proof}

\begin{corollary}\label{cor:cantor}
If $L$ is repetitive and aperiodic then $\trans$ has no isolated points and it is a Cantor set (compact, metrizable and totally disconnected).
\end{corollary}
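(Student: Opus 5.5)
Compactness, metrizability and total disconnectedness of $\trans$ are already given by Lemma~\ref{lem:collapse}. Brouwer's characterization identifies a Cantor set as a nonempty compact metrizable totally disconnected space without isolated points. So only perfectness remains to be proved. By Lemma~\ref{lem:collapse} the cylinders $\trans_\rho(\omega)$ form a neighbourhood basis at $\omega$. It therefore suffices to show that for every $\omega\in\trans$ and every $\rho>0$ the cylinder $\trans_\rho(\omega)$ contains some $\omega'\neq\omega$.

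Fix $\omega$ and $\rho$ and write $P:=\omega\cap\bar B_\rho$. By Step~5 of Proposition~\ref{prop:basic} (minimality, from repetitivity), the $\GL$-orbit $\{\omega-x:x\in\omega\}$ is dense in $\trans$. Hence $P$ recurs in $\omega$ itself: the set $\{x\in\omega:(\omega-x)\cap\bar B_\rho=P\}$ is nonempty. I will show it contains some $x\neq0$. Indeed, each $\omega-x$ with such $x$ lies in $\trans_\rho(\omega)$. If $\omega-x=\omega$ for some $x\neq 0$, then $x$ would be a period of $\omega$, which Proposition~\ref{prop:hullfacts}\ref{H4} forbids under repetitivity and aperiodicity. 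So any nonzero return $x$ gives $\omega':=\omega-x\ne\omega$ in the cylinder, and we are done.

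The remaining point, and the only real obstacle, is to produce a return vector $x\neq0$. I would argue by contradiction. Suppose the only occurrence of $P$ in $\omega$ is at $x=0$. Then $\omega$ is isolated in $\trans$: its neighbourhood $\trans_\rho(\omega)$ meets the dense orbit of $\omega$ only in $\omega$ itself. Since $\trans_\rho(\omega)$ is open and the orbit is dense, every point of the cylinder is a limit of orbit points lying in the cylinder, that is, a limit of the constant sequence $\omega$. Hence $\trans_\rho(\omega)=\{\omega\}$. Now apply minimality at a different base point. Take $\omega_0:=L-\ell$ with $\ell\in L$. Its orbit, the set of translates $L-y$ with $y\in L$, is dense, so it meets $\{\omega\}$, giving $\omega=L-y_0$ for some $y_0\in L$. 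Repetitivity of $L$ makes the occurrence set of the patch $P_\rho(L,y_0)=P$ relatively dense in $L$, in particular infinite. So there is $y\neq y_0$ with $L-y\in\trans_\rho(\omega)=\{\omega\}$. Then $L-y=L-y_0$, so $y-y_0\neq0$ is a period of $L$, contradicting aperiodicity. Thus a nonzero return exists, $\trans$ has no isolated points, and $\trans$ is a Cantor set. Nonemptiness is clear, since $L-\ell\in\trans$.
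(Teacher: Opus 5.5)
Your proof is correct, but it closes the argument differently from the paper. Both proofs assume an isolated point and use minimality (Step~5 of Proposition~\ref{prop:basic}) to place it in a prescribed orbit.

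The paper then argues structurally. If $\{\omega_0\}$ is open, every dense orbit meets it, so $\trans$ is the single orbit of $\omega_0$. The translation homeomorphisms $\dom_x\to\dom_{-x}$ carry the isolation of $\omega_0$ to every point, so the compact space $\trans$ is finite. Freeness (H4) makes $x\mapsto\omega_0-x$ injective, which forces $\omega_0$ to be finite and contradicts relative denseness.

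You instead put the isolated point in the orbit of $L$, writing $\omega=L-y_0$, and use the combinatorial definition of repetitivity directly. The occurrence set of $P=P_\rho(L,y_0)$ is relatively dense, hence contains some $y\neq y_0$. Then $L-y\in\trans_\rho(\omega)=\{\omega\}$ gives a nonzero period of $L$.

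What each route buys:
\begin{itemize}
\item Your route skips the homogeneity and finiteness steps. It needs only aperiodicity of $L$ itself, not freeness of the whole hull.
\item The paper's route uses repetitivity only through minimality and (H4). It is therefore a purely dynamical argument: minimality, freeness and infinite orbits already exclude isolated units.
\end{itemize}

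One simplification is available. Your detour through nonzero returns of $P$ in $\omega$, and the appeal to (H4) to get $\omega-x\neq\omega$, are unnecessary. Your contradiction argument already shows directly that no cylinder $\trans_\rho(\omega)$ is a singleton. Since the cylinders form a neighbourhood basis (Lemma~\ref{lem:collapse}), that is all perfectness requires.
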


\begin{proof}
Suppose $\{\omega_0\}$ were open in $\trans$. By Step~5 the orbit of
every $\eta\in\trans$ is dense, hence meets $\{\omega_0\}$; as lying in
a common orbit is an equivalence relation, $\trans$ is the single orbit
$\{\omega_0-x:x\in\omega_0\}$. For $x\in\omega_0$ the map
$\eta\mapsto\eta-x$ is a homeomorphism of $\dom_x$ onto $\dom_{-x}$
(Step~2) between clopen subsets of $\trans$, so $\{\omega_0-x\}$ is open
and every point of $\trans$ is isolated; a compact space all of whose
points are isolated is finite, so $\trans$ is finite. But by \ref{H4} the
map $x\mapsto\omega_0-x$ is injective on $\omega_0$, so $\omega_0$ would
be finite, contradicting relative denseness. Now apply
Lemma~\ref{lem:collapse} and Brouwer's characterization of the Cantor set.
\end{proof}

\subsection{The address cocycle}\label{sec:cocycle}

The address-map geometry of \cite{Lagarias}, including the explicit
constants and hull compatibility used in the weighted estimates, is
recalled here.
\begin{proposition}[{Lagarias \cite{Lagarias}}]\label{prop:lagarias}
Let $L\subset\R^d$ be an FLC Delone set. Then
$\Gamma_L:=\langle L-L\rangle\le(\R^d,+)$ is free abelian of finite rank
$N\ge d$. Any choice of basis gives the \emph{address map}, a group
isomorphism $a:\Gamma_L\to\Z^N$.
\end{proposition}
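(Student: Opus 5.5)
The plan is to prove that $\Gamma_L$ is a finitely generated, torsion-free subgroup of $\R^d$; the structure theorem for finitely generated abelian groups then makes it free abelian. Torsion-freeness is immediate, since $\R^d$ is torsion-free. The rank bound $N\ge d$ follows from relative denseness. Every point of $\R^d$ lies within distance $R$ of $L$, so $L-L$ contains vectors near $tv$ for every direction $v$ and every large $t>0$. Hence $L-L$ spans $\R^d$ over $\R$, so it contains $d$ linearly independent vectors, and these generate a copy of $\Z^d$ inside $\Gamma_L$. Once finite generation is known, the address map is simply the coordinate map with respect to a chosen $\Z$-basis.

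The key step is finite generation, which I would prove by a local-to-global argument. Let $F:=(L-L)\cap\bar B_{2R}$; it is finite because FLC makes $L-L$ closed and discrete. I claim $\langle L-L\rangle=\langle F\rangle$. Fix a base point $x_0\in L$. It suffices to show that every $y-x_0$ with $y\in L$ is a sum of elements of $F$, since every element of $L-L$ is a difference of two such vectors. To do this, join $x_0$ to $y$ by a straight segment and subdivide it into pieces of length at most $\varepsilon$, with points $p_0=x_0,\dots,p_m=y$. By relative denseness, each $p_j$ has a point $\ell_j\in L$ with $|\ell_j-p_j|\le R$, and we take $\ell_0=x_0$ and $\ell_m=y$. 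Then $|\ell_{j+1}-\ell_j|\le 2R+\varepsilon$. Replacing $F$ by $(L-L)\cap\bar B_{2R+1}$ and taking $\varepsilon=1$ handles this slack. Each consecutive difference $\ell_{j+1}-\ell_j$ therefore lies in the finite set $F$, and the differences telescope to $y-x_0$. So $\Gamma_L$ is generated by a finite set.

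No part of this looks like a serious obstacle. The only point needing care is that the finiteness of $F$ really uses FLC, in the form that $L-L$ is closed and discrete (Definition~\ref{def:delone}(ii)); uniform discreteness of $L$ alone would not suffice. Since the paper attributes this result to \cite{Lagarias}, the proof can also simply cite it, with the argument above recorded as the self-contained route. I would not attempt to prove the finer metric comparison between address length and Euclidean length at this point, as it is deferred to Lemma~\ref{lem:addcomp}.
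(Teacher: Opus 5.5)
Your proposal is correct and follows essentially the same route as the paper. Both arguments chain points of $L$ lying near a subdivided segment to show that $\Gamma_L$ is generated by a finite ball of return vectors; the paper uses $(L-L)\cap\bar B_{3R}$ with steps of length at most $R$, while you use radius $2R+1$ with unit steps. Both then conclude with the structure theorem for finitely generated torsion-free abelian groups and with relative denseness for $N\ge d$. The remaining difference is cosmetic: you telescope through a fixed base point $x_0$, whereas the paper telescopes an arbitrary difference $y_1-y_0$ directly.
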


\begin{proof}
By FLC the set $F:=(L-L)\cap\bar B_{3R}$ is finite. It suffices to
consider $z=y_1-y_0\in(L-L)\setminus\{0\}$. Put $m:=\lceil|z|/R\rceil$, $p_j:=y_0+\tfrac jm z$,
and choose $x_j\in L\cap\bar B_R(p_j)$ for $0<j<m$, with $x_0:=y_0$,
$x_m:=y_1$; then $x_{j+1}-x_j\in F$ for every $j$ and
$z=\sum_j(x_{j+1}-x_j)$. Hence $\Gamma_L$ is generated by $F$. A
finitely generated torsion-free abelian group is free of finite rank,
and $N\ge d$ because $L-L$ contains $d$ linearly independent vectors
($L$ is relatively dense, so it is not contained in a proper affine
subspace). Lagarias works with $\langle L\rangle$ rather than
$\langle L-L\rangle$; the two agree when $0\in L$, and $\Gamma_L$ has
the advantage of being translation-invariant.
\end{proof}

\begin{lemma}[Hull compatibility]\label{lem:hullcompat}
Let $L$ be FLC. Then $\omega-\omega\subset\Gamma_L$ for every $\omega\in\hull(L)$; in fact every return vector of every $\omega$ in the hull is a return vector of $L$: $\bigcup_{\omega\in\hull}(\omega-\omega)\subset \overline{L-L}=L-L$.
\end{lemma}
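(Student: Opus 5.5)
The plan is to reduce everything to Lemma~\ref{lem:patch}(i), which already gives $\omega-\omega\subseteq L-L$ for each $\omega\in\hull$, together with FLC, which gives that $L-L$ is closed and discrete (Definition~\ref{def:delone}(ii)).

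First, fix $\omega\in\hull$ and two points $x,y\in\omega$. Choose $\rho\ge\max(|x|,|y|)$. Lemma~\ref{lem:patch}(i) supplies $v\in\R^d$ with $\omega\cap\bar B_\rho=(L-v)\cap\bar B_\rho$. Hence $x=\ell_1-v$ and $y=\ell_2-v$ for some $\ell_1,\ell_2\in L$, so $x-y=\ell_1-\ell_2\in L-L$. Taking the union over all $\omega$ gives $\bigcup_{\omega\in\hull}(\omega-\omega)\subseteq L-L$, and a fortiori this union lies in $\overline{L-L}$.

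Second, closedness of $L-L$ under FLC identifies $\overline{L-L}$ with $L-L$, so the displayed chain holds with the stated equality. Since $L-L\subseteq\langle L-L\rangle=\Gamma_L$ by Proposition~\ref{prop:lagarias}, we also get $\omega-\omega\subseteq\Gamma_L$ for every $\omega\in\hull$.

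The only point that needs care is the closure identification. I would take it directly from the equivalent form of FLC in Definition~\ref{def:delone}(ii). Alternatively, one can argue that $(L-L)\cap\bar B_\rho$ is finite for every $\rho$, since it is read off the finitely many patches in $\mathcal P_{2\rho}$. A set whose intersection with every ball is finite has no accumulation points and is therefore closed. Everything else is bookkeeping already done in Lemma~\ref{lem:patch}.
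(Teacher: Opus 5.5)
Your proposal is correct and follows the paper's proof. The paper also reads off $\omega-\omega\subseteq L-L\subseteq\Gamma_L$ directly from the final clause of Lemma~\ref{lem:patch}(i), with no limit argument, and uses closedness of $L-L$ under FLC (Definition~\ref{def:delone}(ii)) to get $\overline{L-L}=L-L$. Your alternative justification of closedness, via finiteness of $(L-L)\cap\bar B_\rho$ from finitely many finite patches, is also valid.
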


\begin{proof}
Contained in Lemma~\ref{lem:patch}(i), whose final clause gives
$\omega-\omega\subseteq L-L\subseteq\langle L-L\rangle=\Gamma_L$ for
every $\omega\in\hull$, with no limit argument; and $L-L$ is closed by
FLC (Definition~\ref{def:delone}), so $\overline{L-L}=L-L$.
\end{proof}

\begin{proposition}[The address cocycle]\label{prop:cocycle}
Let $L$ be an FLC Delone set. Define
\[
c:\GL\longrightarrow\Z^N,\qquad c(\omega,x):=a(x).
\]
Then: (i) $c$ is a continuous (indeed locally constant) groupoid
cocycle; (ii) $c^{-1}(0)=\trans$ (the cocycle is faithful); (iii) for
every $g\in\Z^N$, letting $x_g:=a^{-1}(g)\in\Gamma_L$,
\[
c^{-1}(g)=\{(\omega,x_g):\ \omega\in\trans,\ x_g\in\omega\}
\]
is a \emph{compact open bisection} of $\GL$, with
$\dom_g:=\{\omega\in\trans: x_g\in\omega\}$ clopen in $\trans$.
Moreover $x_{-g}=-x_g$,
\[
r\big(c^{-1}(g)\big)=\dom_g,\qquad
s\big(c^{-1}(g)\big)=\dom_g-x_g=\dom_{-g},
\]
and $\dom_g\neq\varnothing$ if and only if $x_g\in L-L$; consequently
$c(\GL)=a(L-L)$.
\end{proposition}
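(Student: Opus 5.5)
The plan is to verify the four claims in turn, using only the groupoid structure of Definition~\ref{def:groupoid} and the clopenness of $\dom_x$ established in \eqref{eq:domclopen}.

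\emph{Step 1: cocycle and continuity.}
For composable arrows $(\omega,x)\cdot(\omega-x,y)=(\omega,x+y)$ we have
\[
c\big((\omega,x)(\omega-x,y)\big)=a(x+y)=a(x)+a(y).
\]
This holds because $a$ is a homomorphism on $\Gamma_L$, and $x,y\in\Gamma_L$ by Lemma~\ref{lem:hullcompat}. Local constancy is immediate: on each basic set $U_{P,x}$ the second coordinate is fixed, so $c$ is constant there. By Step~1 of Proposition~\ref{prop:basic}, these sets form a basis of the topology.

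\emph{Step 2: fibres.}
Since $a$ is injective, $a(x)=0$ forces $x=0$. Hence $c^{-1}(0)=\{(\omega,0)\}$, which is the unit space identified with $\trans$. More generally, $a(x)=g$ if and only if $x=x_g$, which gives the displayed description of $c^{-1}(g)$. Its range is $\dom_g=\dom_{x_g}$, and this set is clopen by \eqref{eq:domclopen}, taking $\rho\ge|x_g|$.

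To see that $c^{-1}(g)$ is a bisection, note that the second coordinate is fixed at $x_g$. Thus $r$ is injective on $c^{-1}(g)$, and $s(\omega,x_g)=\omega-x_g$ is injective as well. The set is open, being the preimage of a point under a locally constant map. It is compact, since $r$ restricts to a homeomorphism onto the compact clopen set $\dom_g$; this uses the inverse $\omega\mapsto(\omega,x_g)$, whose continuity follows from the product topology of Step~1 of Proposition~\ref{prop:basic}.

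\emph{Step 3: sources.}
We have $x_{-g}=-x_g$ because $a^{-1}$ is a homomorphism. For the source of $c^{-1}(g)$, observe that $x_g\in\omega$ and $0\in\omega$ together imply $0,-x_g\in\omega-x_g$. Conversely, if $\eta\in\dom_{-x_g}$, then $\eta+x_g\in\dom_{x_g}$. Therefore $s(c^{-1}(g))=\dom_g-x_g=\dom_{-g}$, exactly as in Step~2 of Proposition~\ref{prop:basic}.

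\emph{Step 4: nonemptiness criterion.}
Suppose $\dom_g\neq\varnothing$, and pick $\omega\in\dom_g$. Then $x_g=x_g-0\in\omega-\omega\subseteq L-L$ by Lemma~\ref{lem:patch}(i). Conversely, suppose $x_g=\ell_1-\ell_0$ with $\ell_i\in L$. Then $\omega:=L-\ell_0\in\trans$ by Lemma~\ref{lem:patch}(ii), and $x_g\in\omega$. Hence $c(\GL)=\{g:\dom_g\neq\varnothing\}=a(L-L)$.

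Nothing here is a real obstacle. The only point needing care is that $c^{-1}(g)$ carries the subspace topology of $\trans\times\R^d$, so that its compactness follows from compactness of $\dom_g$. This is settled by the identification of topologies in Step~1 of Proposition~\ref{prop:basic}.
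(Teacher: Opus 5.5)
Your proposal is correct and follows the paper's proof essentially step for step. It uses additivity of $a$ together with the basis $U_{P,x}$ for local constancy, and injectivity of $a$ for the fibre description. It then shows $r|_{c^{-1}(g)}$ is a homeomorphism onto the compact clopen set $\dom_g$, using the topology induced from $\trans\times\R^d$, and uses the witness $L-\ell_0\in\trans$ to compute $c(\GL)=a(L-L)$.

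The differences are cosmetic. You check injectivity of $s$ and the identity $s(c^{-1}(g))=\dom_{-g}$ pointwise, where the paper invokes the translation homeomorphism $\dom_{x_g}\to\dom_{-x_g}$. You also cite Lemma~\ref{lem:hullcompat} instead of Lemma~\ref{lem:patch}(ii) to place $x,y$ in $\Gamma_L$.
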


\begin{proof}
First, $c$ is well defined: if $(\omega,x)\in\GL$ then
$x\in\omega\subseteq L-L\subseteq\Gamma_L$ by
Lemma~\ref{lem:patch}(ii), so $a(x)\in\Z^N$ makes sense.

(i) Let $\big((\omega,x),(\omega-x,y)\big)$ be a composable pair, with
product $(\omega,x+y)$ (Definition~\ref{def:groupoid}). As above
$x,y\in\Gamma_L$, hence $x+y\in\Gamma_L$, and additivity of the
isomorphism $a$ (Proposition~\ref{prop:lagarias}) gives
\[
c\big((\omega,x)(\omega-x,y)\big)=a(x+y)=a(x)+a(y)
=c(\omega,x)+c(\omega-x,y);
\]
in particular $c$ vanishes on units and $c(\gamma^{-1})=-c(\gamma)$.
Local constancy: the sets $U_{P,x}$ form a basis of the topology of
$\GL$ (Step~1 of the proof of Proposition~\ref{prop:basic}) and have
constant second coordinate, on which alone $c(\omega,x)=a(x)$ depends.
Since $\Z^N$ is discrete, $c$ is continuous and every $c^{-1}(g)$ is
clopen in $\GL$.

(ii) By injectivity of $a$, $c(\omega,x)=0$ iff $x=0$; thus
$c^{-1}(0)=\{(\omega,0):\omega\in\trans\}$ is the unit space,
identified with $\trans$ as in Definition~\ref{def:groupoid}.

(iii) Fix $g\in\Z^N$. If $(\omega,x)\in c^{-1}(g)$ then $x\in\Gamma_L$
with $a(x)=g$, so $x=x_g$ by injectivity; and $(\omega,x_g)\in\GL$
precisely when $x_g\in\omega$, i.e.\ $\omega\in\dom_g=\dom_{x_g}$,
clopen in $\trans$ by \eqref{eq:domclopen}. This proves the displayed
description of $c^{-1}(g)$, which is clopen in $\GL$ by (i).

The restriction $r|_{c^{-1}(g)}:(\omega,x_g)\mapsto\omega$ is a
bijection onto $\dom_g$ with inverse $\omega\mapsto(\omega,x_g)$.
Both maps are continuous because the topology on $\GL$ is induced from
$\trans\times\R^d$ (Step~1 of the proof of Proposition~\ref{prop:basic}).
Therefore, $r$ is a homeomorphism from $c^{-1}(g)$ onto the clopen set
$\dom_g$. In particular, $c^{-1}(g)$
is compact, $\dom_g$ being closed in the compact space $\trans$
(Lemma~\ref{lem:collapse}). Since $a$ is an isomorphism,
$x_{-g}=-x_g$; and $s|_{c^{-1}(g)}$ is the composition of
$r|_{c^{-1}(g)}$ with the translation $\omega\mapsto\omega-x_g$, a
homeomorphism of $\dom_{x_g}$ onto $\dom_{-x_g}=\dom_{-g}$ (Step~2 of
the proof of Proposition~\ref{prop:basic}); hence $s$ restricts to a
homeomorphism of $c^{-1}(g)$ onto the clopen set $\dom_{-g}$. Thus
$c^{-1}(g)$ is a compact open bisection: the unique arrow of degree
$g$ with range $\omega\in\dom_g$ is $(\omega,x_g)$, and the unique one
with source $\eta\in\dom_{-g}$ is $(\eta+x_g,\,x_g)$.

Finally, if $\omega\in\dom_g$ then
$x_g\in\omega\subseteq L-L$ (Lemma~\ref{lem:patch}(ii)); conversely,
if $x_g=p-q$ with $p,q\in L$, then $\omega:=L-q\in\trans$ and
$x_g\in\omega$. Hence $\dom_g\neq\varnothing\iff x_g\in L-L$, and
$c(\GL)=\{g:\dom_g\neq\varnothing\}=a(L-L)$.
\end{proof}

\begin{example}
For $L=\Z^d$ with a decoration (subshifts, Remark~\ref{rem:decorated}): $N=d$ and $c(\omega,x)=x$. Fibonacci: $N=2$. Penrose and Ammann--Beenker vertices: $N=4$. The icosahedral quasicrystal: $N=6$. Disordered binary alloys on $\Z^d$ (the hull of a Bernoulli configuration): $N=d$, see Example~\ref{ex:random}.
\end{example}

The address map trades the physical (Euclidean) metric of $\R^d$ for the
algebraic metric of $\Z^N$, and the kernel classes of \S\ref{sec:kernels}
are defined through the latter. The next lemma shows that, precisely
because of FLC, nothing is lost in the trade: on return vectors the two
metrics are bi-Lipschitz comparable, with constants uniform over the
hull. Fix a basis $v_1,\dots,v_N$ of $\Gamma_L$, let
$a:\Gamma_L\to\Z^N$ be the associated address map
(Prop.~\ref{prop:lagarias}), and let $\Psi:\R^N\to\R^d$ be the
(surjective) linear map with $\Psi(e_i)=v_i$, so that
$\Psi\circ a=\mathrm{id}_{\Gamma_L}$. Euclidean norms on $\R^d$ and
$\R^N$ are both written $|\cdot|$; any other choice of norms only
affects the constants below.

\begin{lemma}[Address--Euclidean comparison]\label{lem:addcomp}
Let $L\subset\R^d$ be an $(r,R)$-Delone set with FLC. The set
$(L-L)\cap\bar B_{3R}$ is finite; put
\[
M:=\max\big\{\,|a(z)|\ :\ z\in(L-L)\cap\bar B_{3R}\,\big\},\qquad
c_-:=\|\Psi\|^{-1},\qquad c_+:=M\Big(\frac1r+\frac1R\Big).
\]
Then
\begin{equation}\label{eq:bilip}
c_-\,|z|\ \le\ |a(z)|\ \le\ c_+\,|z|
\qquad\text{for all } z\in L-L ,
\end{equation}
and consequently, by Lemma~\ref{lem:hullcompat},
\[
c_-\,|x|\ \le\ |c(\omega,x)|\ \le\ c_+\,|x|
\qquad\text{for all }(\omega,x)\in\GL ,
\]
with the same constants for every configuration in the hull.
\end{lemma}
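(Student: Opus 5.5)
The two inequalities in \eqref{eq:bilip} are proved separately; the hull statement then follows from Lemma~\ref{lem:hullcompat}. For $(\omega,x)\in\GL$ we have $x\in L-L$, and $c(\omega,x)=a(x)$ by definition, so nothing about the configuration enters the constants.

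\emph{Lower bound.} Since $\Psi\circ a=\mathrm{id}_{\Gamma_L}$, for $z\in L-L$ we get $|z|=|\Psi(a(z))|\le\|\Psi\|\,|a(z)|$. This is exactly $c_-|z|\le|a(z)|$. The operator norm is finite and nonzero because $\Psi$ is a surjective linear map between finite-dimensional spaces.

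\emph{Upper bound.} Here I reuse the chain construction from the proof of Proposition~\ref{prop:lagarias}. The finiteness of $(L-L)\cap\bar B_{3R}$ is FLC, so $M<\infty$. Take $z=y_1-y_0\ne0$ with $y_0,y_1\in L$, and put $m=\lceil|z|/R\rceil$ and $p_j=y_0+\tfrac jm z$. Choose $x_j\in L\cap\bar B_R(p_j)$ for $0<j<m$, with $x_0=y_0$ and $x_m=y_1$. Then $|p_{j+1}-p_j|=|z|/m\le R$, so $|x_{j+1}-x_j|\le 3R$ and every increment lies in $(L-L)\cap\bar B_{3R}$. Additivity of $a$ and the triangle inequality give $|a(z)|\le mM$. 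It remains to bound $m$ linearly in $|z|$. We have $m\le |z|/R+1$, and uniform discreteness gives $|z|\ge r$ for $z\ne0$ (two distinct points of $L$ lie at distance at least $r$, since an open $r$-ball contains at most one point). Hence $1\le|z|/r$, so
\[
m\le |z|\Big(\frac1R+\frac1r\Big)\quad\text{and}\quad |a(z)|\le M\Big(\frac1r+\frac1R\Big)|z|=c_+|z|.
\]
For $z=0$ both sides of \eqref{eq:bilip} vanish.

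The only delicate point is the additive $+1$ in the bound on $m$. Absorbing it into a linear bound needs the Delone separation $|z|\ge r$, and this is what produces the $1/r$ term in $c_+$. Everything else is the Lagarias chain argument, and uniformity over the hull is automatic because all return vectors of hull elements are return vectors of $L$.
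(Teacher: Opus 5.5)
Your proposal is correct and follows the paper's proof essentially step for step. The lower bound comes from $\Psi\circ a=\mathrm{id}_{\Gamma_L}$, and the upper bound from the Lagarias chain with increments in $(L-L)\cap\bar B_{3R}$, giving $|a(z)|\le mM$. The additive $1$ in $m\le|z|/R+1$ is absorbed using $|z|\ge r$, and uniformity over the hull follows from Lemma~\ref{lem:hullcompat}.
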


\begin{proof}
Finiteness of $(L-L)\cap\bar B_{3R}$ follows since $L-L$ is closed and
discrete (FLC) and $\bar B_{3R}$ is compact.

\emph{Lower bound.} For every $z\in\Gamma_L$ one has $z=\Psi(a(z))$,
hence $|z|\le\|\Psi\|\,|a(z)|$. (Only finite generation is used; this
inequality holds on all of $\Gamma_L$.)

\emph{Upper bound.} Let $z=y_1-y_0\neq0$ with $y_0,y_1\in L$. Put
$m:=\lceil|z|/R\rceil$ and $p_j:=y_0+\tfrac jm\,z$ for $0\le j\le m$.
By relative denseness choose $x_j\in L\cap\bar B(p_j,R)$ for $0<j<m$,
and set $x_0:=y_0$, $x_m:=y_1$. Each increment $z_j:=x_{j+1}-x_j$
belongs to $L-L$ and
\[
|z_j|\ \le\ |x_{j+1}-p_{j+1}|+|p_{j+1}-p_j|+|p_j-x_j|
\ \le\ R+\frac{|z|}{m}+R\ \le\ 3R ,
\]
so $|a(z_j)|\le M$. Since $a$ is additive and
$\sum_{j=0}^{m-1}z_j=z$,
\[
|a(z)|\ \le\ \sum_{j=0}^{m-1}|a(z_j)|\ \le\ mM
\ \le\ M\Big(\frac{|z|}{R}+1\Big)
\ \le\ M\Big(\frac1R+\frac1r\Big)|z| ,
\]
using $|z|\ge r$ in the last step (uniform discreteness).

\emph{Hull.} Every return vector of every $\omega\in\hull$ is a return
vector of $L$ (Lemma~\ref{lem:hullcompat}), so \eqref{eq:bilip} applies
verbatim; for $(\omega,x)\in\GL$ one has $x\in\omega-\omega$ because
$0\in\omega$, and $c(\omega,x)=a(x)$.
\end{proof}

\begin{remark}[FLC is exactly the comparability condition]\label{rem:flcsharp}
(i) The upper bound in \eqref{eq:bilip} \emph{characterizes} FLC:
Lagarias \cite{Lagarias} proves that a finitely generated Delone set
satisfies a Lipschitz bound $|a(z)|\le C|z|$ on $L-L$ if and only if it
is of finite type, i.e.\ FLC. The hypothesis of
Lemma~\ref{lem:addcomp} is therefore necessary as well as sufficient,
and the standing FLC assumption of this paper is precisely the
condition under which the algebraic and the physical decay scales
agree.
(ii) No such comparison can hold on all of $\Gamma_L$ when $N>d$: a
finitely generated subgroup of $\R^d$ of rank $N>d$ is non-discrete,
hence contains nonzero elements of arbitrarily small Euclidean norm,
while their addresses satisfy $|a(x)|\ge1$. The content of
\eqref{eq:bilip} is that the \emph{return vectors} are protected from
this collapse.
\end{remark}

\begin{example}[The collapse is real]\label{ex:fibcollapse}
Fibonacci ($d=1$, $N=2$): $\Gamma_L=\Z+\tau\Z$ with $\tau$ the golden
mean, basis $v_1=1$, $v_2=\tau$, so $c_-=(1+\tau^2)^{-1/2}$. The
elements $x_k:=F_{k+1}-F_k\,\tau\in\Gamma_L$ satisfy
$|x_k|=\tau^{-k}\to0$ while $|a(x_k)|=|(F_{k+1},-F_k)|\sim\tau^{k}$:
on $\Gamma_L$ the ratio $|a(x)|/|x|$ is unbounded. For the standard
normalization (tile lengths $1$ and $\tau$) none of the $x_k$, $k\ge1$,
is a return vector.
\end{example}

\begin{remark}[Effective constants]\label{rem:constants}
The constants in Lemma~\ref{lem:addcomp} are effective once a basis of
$\Gamma_L$, the Delone parameters, and a complete list of the short
return vectors $(L-L)\cap\bar B_{3R}$ are supplied:
$c_-^{-1}=\|\Psi\|$ is the operator norm of the $d\times N$ matrix
$[\,v_1|\cdots|v_N\,]$, and $M$ is a maximum over that finite list.
FLC guarantees its finiteness; it does not, by itself, give an algorithm
for extracting a certified complete list from an unspecified infinite
point set. The quantitative estimates also take the counting constants
$(\kappa,C_\kappa)$ as data. They depend on the
choice of basis of $\Gamma_L$, but a change of basis by
$A\in\mathrm{GL}_N(\Z)$ modifies them in a controlled way
($a\mapsto Aa$). These constants enter the quantitative bounds in
Theorems~\ref{thm:nci} and~\ref{thm:stretchednci}; no optimality
claim is made for the resulting control functions.
\end{remark}

\section{Main results}\label{sec:main}
\subsection{Coefficients and graded algebras}\label{sec:coeff}

\begin{convention}[Coefficient scope]\label{conv:coeffscope}
Throughout the untwisted results, an \emph{admissible coefficient bundle}
means exactly the trivial bundle
$\Fell=\GL\times M_n(\Cx)$ for a fixed finite integer $n\ge1$.
Case (i) is $n=1$ and case (ii) is general finite $n$. The operator
norm on $M_n(\Cx)$ is used, and the ambient algebra is
\[
\mathfrak C=C_r^*(\GL;M_n(\Cx))\cong M_n(C_r^*(\GL)).
\]
Statements written with scalar notation are read with this matrix
amplification when $n>1$. No theorem below quantifies over arbitrary
Fell bundles or infinite-dimensional coefficient fibres.
The specific scalar magnetic twist is treated separately in
Remark~\ref{rem:magnetic}.
\end{convention}

On $C_\co(\GL|\Fell)$ one has Renault's $I$-norm, the exact analogue of the Hahn norm $\|\cdot\|_{\infty,1}$ of \cite[\S2.1]{FJM}:
\[
\|\Phi\|_{I}:=\max\Big\{\ \sup_{\omega\in\trans}\sum_{x\in\omega}\|\Phi(\omega,x)\|\ ,\ \sup_{\omega\in\trans}\sum_{x\in\omega}\|\Phi(\omega-x,-x)\|\ \Big\}.
\]
Recall from \cite[Def.~2.1]{FJM} the notions of \emph{symmetric}, \emph{rigidly symmetric} and \emph{hypersymmetric} discrete groupoid, and from \cite[Def.~2.1--2.2]{JM} the notion of a \emph{topologically $\mathsf{G}$-graded $\Cst$-algebra} $\mathfrak{C}=\widetilde{\bigoplus}_{g}\mathfrak{C}_g$ and its $\ell^1$-algebra $\ell^1(\mathfrak{C})=\{\Phi:\sum_g\|\widetilde P_g(\Phi)\|<\infty\}$.

\subsection{The grading}

\begin{theorem}[Cocycle grading]\label{thm:grading}
Let $L$ be FLC and let $\Fell$ be an admissible Fell bundle over $\GL$ (\S\ref{sec:coeff}). Put $\mathfrak{C}:=\Cst_r(\GL|\Fell)=\Cst(\GL|\Fell)$ (full $=$ reduced by amenability, \cite[Prop.~6.1.8]{ADR}) and, for $g\in\Z^N$,
\[
\mathfrak{C}_g:=\overline{C_\co\big(c^{-1}(g)\,\big|\,\Fell\big)}\ \subset\ \mathfrak{C}.
\]
Then $\mathfrak{C}=\widetilde{\bigoplus}_{g\in\Z^N}\mathfrak{C}_g$ is a unital, topologically $\Z^N$-graded $\Cst$-algebra in the sense of \cite[Def.\ 2.1]{JM}, with
$\mathfrak{C}_0\cong C(\trans)$ (case (i)) resp.\ $C(\trans)\otimes M_n$ (case (ii)), and the conditional expectation $\widetilde P_0$ is the restriction of kernels to the unit space; it is faithful.
\end{theorem}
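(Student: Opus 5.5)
The plan is to obtain the grading from the continuous cocycle $c$ of Proposition~\ref{prop:cocycle}. A continuous $\Z^N$-valued cocycle on an étale groupoid induces a dual action of $\T^N$, and the spectral subspaces of that action give the required topological grading. Unitality will follow from compactness of $\trans$: the characteristic function $\chi_{\trans}$ of the unit space is compactly supported, because $\trans=c^{-1}(0)$ is compact open. Hence $\chi_{\trans}\otimes\mathbf 1_n$ lies in $C_\co(\GL|\Fell)$, and it is a unit for the convolution product.

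\emph{Step 1: algebraic grading.} Each $c^{-1}(g)$ is a compact open bisection, and these bisections partition $\GL$. An element $\Phi\in C_\co(\GL|\Fell)$ has compact support, which meets only finitely many of the clopen sets $c^{-1}(g)$. Therefore $\Phi=\sum_g\Phi\chi_{c^{-1}(g)}$ is a finite sum of elements of $C_\co(c^{-1}(g)|\Fell)$. Additivity of the cocycle gives
\[
c^{-1}(g)\,c^{-1}(h)\subseteq c^{-1}(g+h),\qquad c^{-1}(g)^{-1}=c^{-1}(-g),
\]
so $\mathfrak C_g\mathfrak C_h\subseteq\mathfrak C_{g+h}$ and $\mathfrak C_g^*=\mathfrak C_{-g}$. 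The algebraic direct sum $\bigoplus_g C_\co(c^{-1}(g)|\Fell)$ is therefore the dense $*$-subalgebra $C_\co(\GL|\Fell)$.

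\emph{Step 2: topological grading via the dual action.} For $z\in\T^N$ set $(\alpha_z\Phi)(\gamma)=z^{c(\gamma)}\Phi(\gamma)$ on $C_\co$. This is a $*$-automorphism, again by the cocycle identity. In the regular representation on $\ell^2(\GL^{\omega})\otimes\Cx^n$, it is implemented by the unitary of multiplication by $z^{c(\omega,\cdot)}$, so $\alpha_z$ is isometric for the reduced norm and extends to $\mathfrak C$. Strong continuity holds on the dense subalgebra $C_\co$ because each element there involves only finitely many degrees, and it extends to $\mathfrak C$ by an $\varepsilon/3$ argument. Define
\[
\widetilde P_g(\Phi)=\int_{\T^N}z^{-g}\alpha_z(\Phi)\dd z .
\]
This is a contractive projection onto $\mathfrak C_g$: on $C_\co$ it is restriction to $c^{-1}(g)$, and the image is closed and lies in $\mathfrak C_g$ by continuity. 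The projection vanishes on $\mathfrak C_h$ for $h\neq g$. These are exactly the properties required in \cite[Def.~2.1]{JM}, namely a dense algebraic sum together with continuous projections, which forces the sum to be direct.

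\emph{Step 3: the zero fibre and faithfulness.} By Proposition~\ref{prop:cocycle}(ii), $c^{-1}(0)=\trans$. On functions supported on the unit space, the reduced norm equals the sup norm, since such functions act diagonally. Hence $\mathfrak C_0\cong C(\trans)\otimes M_n$. The map $\widetilde P_0$ is restriction to the unit space, as computed in Step 2. For faithfulness, I would use the standard fact that the restriction map $E:C_r^*(\GL)\to C(\trans)$ is a faithful conditional expectation for étale Hausdorff groupoids. For $a\ge0$ one has $\langle\pi_\omega(a)\delta_\gamma,\delta_\gamma\rangle=E(a)(s(\gamma))$. So $E(a^*a)=0$ forces $\pi_\omega(a)=0$ for every $\omega$, hence $a=0$ in the reduced norm. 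The matrix case follows entrywise, because $\mathfrak C\cong M_n(C_r^*(\GL))$. Identifying $\widetilde P_0$ with $E$ also requires $E$ to be continuous, since the two maps then agree on $C_\co$ and hence on $\mathfrak C$.

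The main obstacle I expect is the careful verification that $\alpha_z$ is isometric on the reduced completion. With matrix coefficients I must check that the unitary implementation commutes correctly with the $M_n$ factor, and that the definition of $\widetilde P_g$ matches the conventions of \cite{JM}. Everything else is routine once the bisection structure of Proposition~\ref{prop:cocycle} is in hand.
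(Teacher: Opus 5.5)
Your proposal is correct and follows essentially the same route as the paper: the clopen decomposition into the bisections $c^{-1}(g)$, the gauge action $\alpha_z$ made isometric by unitary implementation in the regular representations, the Fourier-coefficient integrals $\widetilde P_g$, and the identification of $\widetilde P_0$ with restriction to $\trans$. The only real variation is in faithfulness. You read $\|\pi_\omega(a)\delta_\gamma\|^2$ off the diagonal coefficients of $a^*a$ to get $\pi_\omega(a)=0$ for every $\omega$ directly, whereas the paper goes through the injective symbol map $j$ and a sum-of-squares identity, and proves the conditional-expectation property via Tomiyama instead of citing it. Both arguments are valid; the paper's detour pays off because $j$ is reused later in the kernel dictionary.
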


\begin{proof}
A complete, self-contained proof for case (i) is given; the amplification to case (ii) is indicated at the end. Throughout, $\|\cdot\|_r$ is the reduced norm; for $u\in\trans$, $\pi_u$ denotes the regular representation on $\ell^2(\mathcal{G}_u)$, where $\mathcal{G}_u:=s^{-1}(u)$, given by $(\pi_u(f)\xi)(\gamma)=\sum_{\alpha\beta=\gamma}f(\alpha)\xi(\beta)$. Proposition~\ref{prop:cocycle} is used freely: $c$ is a locally constant cocycle with $c^{-1}(0)=\trans$.

\medskip\noindent\textbf{Step 1: partition into compact open bisections.}
The sets $c^{-1}(g)$, $g\in\Z^N$, form a partition of $\GL$ into clopen subsets, and $c^{-1}(g)=\{(\omega,x_g):\omega\in\dom_g\}$ with $x_g:=a^{-1}(g)$ and $\dom_g=\{\omega\in\trans: x_g\in\omega\}$ clopen, hence compact, in $\trans$ (Proposition~\ref{prop:cocycle}(iii), \eqref{eq:domclopen}). Each $c^{-1}(g)$ is a bisection: an arrow in it is determined by its range (the vector $x_g$ being fixed), and likewise by its source; via $r$ it is homeomorphic to $\dom_g$.

\medskip\noindent\textbf{Step 2: the $\Cst$-norm on a bisection is the sup norm.}
First, for $h\in C_\co(\trans)$ each $\pi_u(h)$ is the diagonal operator $(\pi_u(h)\xi)(\gamma)=h(r(\gamma))\xi(\gamma)$; since $u\in\mathcal{G}_u$, taking suprema gives $\|h\|_r=\|h\|_\infty$. Next, let $U\subset\GL$ be an open bisection and $f\in C_\co(U)$. Then $U^{-1}U\subset\trans$ (two arrows of $U$ with the same range coincide), and
\[
(f^*\!*f)(u)=\sum_{s(\beta)=u,\ \beta\in U}|f(\beta)|^2=|f(\gamma_u)|^2 ,
\]
$\gamma_u$ being the unique element of $U\cap s^{-1}(u)$ (the term is $0$ if there is none). Hence $f^*\!*f\in C_\co(\trans)$ and
\[
\|f\|_r^2=\|f^*\!*f\|_r=\|f^*\!*f\|_\infty=\|f\|_\infty^2 .
\]
Consequences: $\mathfrak{C}_g=C_\co(c^{-1}(g))\cong C(\dom_g)$ \emph{isometrically} (already complete, no closure needed); the indicator $u_g:=\mathbf 1_{c^{-1}(g)}\in C_\co(\GL)$ is a partial isometry whose initial and final projections are indicators of clopen subsets of $\trans$; and $u_0=\mathbf 1_\trans$ is a unit for $\mathfrak{C}$ because $\trans$ is compact, so $\mathfrak{C}$ is unital with $\mathfrak{C}_0=C(\trans)$.

\medskip\noindent\textbf{Step 3: grading axioms and density.}
For $f\in C_\co(c^{-1}(g))$, $k\in C_\co(c^{-1}(h))$, every term of $(f*k)(\gamma)=\sum_{\alpha\beta=\gamma}f(\alpha)k(\beta)$ forces $c(\gamma)=c(\alpha)+c(\beta)=g+h$; thus $\mathfrak{C}_g\mathfrak{C}_h\subset\mathfrak{C}_{g+h}$, and $f^*(\gamma)=\overline{f(\gamma^{-1})}$ gives $\mathfrak{C}_g^*=\mathfrak{C}_{-g}$. Density: the compact support of any $f\in C_\co(\GL)$ meets only finitely many of the disjoint open sets $c^{-1}(g)$; since these are clopen, $f=\sum_g f\cdot\mathbf 1_{c^{-1}(g)}$ is a \emph{finite} sum of continuous sections. Hence $C_\co(\GL)=\bigoplus_g^{\mathrm{alg}}C_\co(c^{-1}(g))$ is dense in $\mathfrak{C}$.

\medskip\noindent\textbf{Step 4: gauge action and the projections (the heart).}
Define $\alpha:\T^N\to\mathrm{Aut}(\mathfrak{C})$ on $C_\co$ by
\[
(\alpha_z f)(\gamma):=z^{\,c(\gamma)}f(\gamma),\qquad z^{\,g}:=z_1^{g_1}\cdots z_N^{g_N}.
\]
The cocycle identity makes $\alpha_z$ multiplicative, and $c(\gamma^{-1})=-c(\gamma)$ makes it $*$-preserving. It is isometric for $\|\cdot\|_r$ because it is \emph{unitarily implemented} in every regular representation: with $(U_z\xi)(\gamma):=z^{\,c(\gamma)}\xi(\gamma)$ on $\ell^2(\mathcal{G}_u)$ one checks $U_z\,\pi_u(f)\,U_z^*=\pi_u(\alpha_z f)$, using that the matrix coefficient of $\pi_u(f)$ at $(\beta,\alpha)$ is $f(\beta\alpha^{-1})$ and $c(\beta\alpha^{-1})=c(\beta)-c(\alpha)$. Strong continuity: on $f\in C_\co$, $\alpha_z f-f=\sum_{g\in F}(z^g-1)f_g$ is a finite sum tending to $0$; density plus equicontinuity extend this to $\mathfrak{C}$. Now set, as a Bochner integral over normalized Haar measure,
\[
\widetilde P_g(a):=\int_{\T^N}\bar z^{\,g}\,\alpha_z(a)\dd z .
\]
Each $\widetilde P_g$ is contractive (average of isometries). On $f\in C_\co$ the integral is a finite computation and character orthogonality gives $\widetilde P_g(f)=f\cdot\mathbf 1_{c^{-1}(g)}$; by density, $\widetilde P_g$ maps $\mathfrak{C}$ onto $\mathfrak{C}_g$, acts as the identity on $\mathfrak{C}_g$ and as $0$ on $\mathfrak{C}_h$, $h\neq g$ (in particular the algebraic sum is direct). Thus $\widetilde P_0:\mathfrak{C}\to C(\trans)$ is a unital contractive projection onto a $\Cst$-subalgebra, hence by Tomiyama a positive conditional expectation, and all the axioms of \cite[Def.\ 2.1--2.2]{JM} are verified: the grading is topological.

\medskip\noindent\textbf{Step 5: faithfulness (here ``reduced'' is used).}
Let $j:\Cst_r(\GL)\to C_b(\GL)$, $j(a)(\gamma):=\langle\pi_{s(\gamma)}(a)\,\delta_{s(\gamma)},\delta_\gamma\rangle$. It is contractive and extends the identity on $C_\co$. Approximation
by $C_\co$ in the reduced norm shows that $j(a)$ is a uniform limit
of continuous compactly supported functions, hence belongs to
$C_0(\GL)$. It satisfies $\langle\pi_u(a)\delta_\alpha,\delta_\beta\rangle=j(a)(\beta\alpha^{-1})$ for all $\alpha,\beta\in\mathcal{G}_u$; hence $j$ is injective. A direct computation with $\xi=\delta_u$ gives, for every unit $u$,
\[
j(a^*a)(u)=\|\pi_u(a)\delta_u\|^2=\sum_{\gamma\in\mathcal{G}_u}|j(a)(\gamma)|^2 ,
\]
while $j(\widetilde P_g(a))=j(a)\cdot\mathbf 1_{c^{-1}(g)}$ (integrate $j(\alpha_z a)(\gamma)=z^{\,c(\gamma)}j(a)(\gamma)$ against $\bar z^{\,g}$). If $\widetilde P_0(a^*a)=0$ then the displayed sum vanishes for every $u$, so $j(a)\equiv0$ and $a=0$. Faithful. (Since each $c^{-1}(g)$ is compact, only boundedness of $j(a)$ is needed, not decay.)

\medskip\noindent\textbf{Case (ii).} Amplify the scalar algebra to
$M_n(C_r^*(\GL))$. The homogeneous pieces are
$C(\dom_g,M_n)$ with their supremum operator norms. Every amplified
coefficient map has the formula
\[
(\widetilde P_g\otimes\mathrm{id}_{M_n})(b)
=\int_{\T^N}\bar z^{\,g}(\alpha_z\otimes\mathrm{id}_{M_n})(b)\dd z,
\]
so its norm is at most one, uniformly in $n$. The zero coefficient
is a conditional expectation. If its value on $b^*b$ is zero, its
diagonal entries are sums of positive terms of the form
$\widetilde P_0(b_{ij}^*b_{ij})$; scalar faithfulness forces every
$b_{ij}$ to vanish. This proves matrix faithfulness as well.
\end{proof}

\begin{remark}[Fibres, symbols, and matrix coefficients]\label{rem:symbols}
The following notation from the proof of Theorem~\ref{thm:grading} is
used throughout.
\emph{(a) Fibres.} For $\omega\in\trans$ the source fibre is
parametrized by the configuration itself,
\[
\mathcal G_\omega:=s^{-1}(\omega)
=\big\{\gamma_x:=(\omega-x,\,-x):\ x\in\omega\big\}
\ \longleftrightarrow\ \omega,
\qquad
\gamma_x\gamma_y^{-1}=(\omega-x,\,y-x),
\]
so that $\pi_\omega$ acts on $\ell^2(\mathcal G_\omega)\cong\ell^2(\omega)$
and $c(\gamma_x\gamma_y^{-1})=a(y-x)$.
\emph{(b) Symbols.} The map $j:\mathfrak C\to C_b(\GL)$,
$j(a)(\gamma)=\langle\pi_{s(\gamma)}(a)\delta_{s(\gamma)},\delta_\gamma\rangle$,
of Step~5 is contractive and injective, restricts to the identity on
$C_\co(\GL)$, and satisfies
$j(\widetilde P_g a)=j(a)\,\mathbf 1_{c^{-1}(g)}$. For $a\in\mathfrak C$
set $a(\gamma):=j(a)(\gamma)$; then
$\langle\pi_\omega(a)\delta_{\gamma_y},\delta_{\gamma_x}\rangle
=a(\omega-x,\,y-x)$ for $x,y\in\omega$, i.e.\ $x,y\mapsto a(\omega-x,y-x)$
is the matrix of $\pi_\omega(a)$ in the basis $(\delta_x)_{x\in\omega}$.
\emph{(c) Matrix coefficients.} In case~(ii) every statement is read
with $\mathfrak C$ replaced by $M_n(\mathfrak C)\cong\Cst_r(\GL;M_n)$,
graded by the $M_n(\mathfrak C_g)$, with gauge projections
$\widetilde P_g\otimes\mathrm{id}_{M_n}$, still written
$\widetilde P_g$, and with $j$ then $M_n(\Cx)$-valued. When citing
\cite{JM} or \cite{SS}, their Fourier coefficient maps $E_g$ agree
with $\widetilde P_g$.
\end{remark}

\begin{lemma}[Faithfulness on a dense orbit]\label{lem:densefaithful}
Let $L$ be FLC and let $\omega\in\trans$ have a dense
$\GL$-orbit in $\trans$. Then the regular representation
$\pi_\omega$ of $C_r^*(\GL)$ is faithful.

In particular, if $L$ is repetitive, every regular representation
$\pi_\omega$ is faithful, without any aperiodicity assumption.
The same conclusions hold for matrix amplifications.
\end{lemma}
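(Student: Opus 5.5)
The plan is to show that the kernel of $\pi_\omega$ is zero by showing that $\|\pi_\omega(a)\|$ controls $\|\pi_\eta(a)\|$ for every unit $\eta$. Since $\|a\|_r=\sup_{\eta\in\trans}\|\pi_\eta(a)\|$, this gives isometry of $\pi_\omega$. Two ingredients are needed: orbit invariance and lower semicontinuity.

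\emph{Orbit invariance.} If $\eta=\omega-x$ with $x\in\omega$, then right multiplication by the arrow $(\omega,x)$ is a bijection $\mathcal G_\eta\to\mathcal G_\omega$. It induces a unitary $\ell^2(\mathcal G_\eta)\cong\ell^2(\mathcal G_\omega)$ intertwining $\pi_\eta$ and $\pi_\omega$. Concretely, in the parametrization of Remark~\ref{rem:symbols}(a), $\eta=\omega-x$ as a point set is $\omega$ shifted, and the matrices $a(\omega-x',y'-x')$ coincide after relabelling. Hence $\|\pi_\eta(a)\|=\|\pi_\omega(a)\|$ on the orbit.

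\emph{Lower semicontinuity.} I claim $\eta\mapsto\|\pi_\eta(a)\|$ is lower semicontinuous for $a\in\mathfrak C$. It suffices to prove this for $f\in C_\co(\GL)$, since $\|\pi_\eta(a)-\pi_\eta(f)\|\le\|a-f\|_r$ uniformly in $\eta$, and uniform limits of lower semicontinuous functions are lower semicontinuous.

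For $f\in C_\co(\GL)$, $f$ is a finite sum $\sum_{g\in F}f_g$ with $f_g\in C(\dom_g)$ locally constant up to uniform approximation. By density we may even assume each $f_g$ is locally constant, i.e.\ depends only on the $\rho$-patch for some fixed $\rho$. Fix finitely supported unit vectors $\xi,\zeta\in\ell^2(\eta)$ with $|\langle\pi_\eta(f)\xi,\zeta\rangle|>\|\pi_\eta(f)\|-\varepsilon$, supported in $\eta\cap\bar B_\Lambda$. The matrix entries $f(\eta-x,y-x)$ for $x,y\in\eta\cap\bar B_\Lambda$ depend only on the patch $\eta\cap\bar B_{\Lambda+\rho}$.

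If $\eta'\in\trans_{\Lambda+\rho}(\eta)$, the same points lie in $\eta'$ with the same entries. Transplanting $\xi,\zeta$ then gives $\|\pi_{\eta'}(f)\|>\|\pi_\eta(f)\|-\varepsilon$ on the neighbourhood $\trans_{\Lambda+\rho}(\eta)$ from Lemma~\ref{lem:collapse}. This is the step I expect to need the most care: the locality of matrix entries, and the reduction to locally constant $f$ via density of locally constant sections, which holds since $\dom_g$ is totally disconnected and compact.

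\emph{Conclusion.} Given $\eta$ and $\varepsilon$, lower semicontinuity gives an open $V\ni\eta$ on which $\|\pi_{\eta'}(a)\|>\|\pi_\eta(a)\|-\varepsilon$. Density of the orbit of $\omega$ gives a point $\omega-x\in V$, so $\|\pi_\omega(a)\|=\|\pi_{\omega-x}(a)\|>\|\pi_\eta(a)\|-\varepsilon$. Taking the supremum over $\eta$ yields $\|\pi_\omega(a)\|=\|a\|_r$, so $\pi_\omega$ is faithful, indeed isometric.

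For repetitive $L$, Proposition~\ref{prop:basic} Step~5 gives minimality, so every orbit is dense, and no aperiodicity is used. For matrix amplifications, apply the same argument to $M_n(\mathfrak C)$: orbit invariance amplifies via $\pi_\omega\otimes\mathrm{id}$, and the locality argument is unchanged with $M_n$-valued entries. Alternatively, injectivity of $\pi_\omega\otimes\mathrm{id}_{M_n}$ follows entrywise from that of $\pi_\omega$.
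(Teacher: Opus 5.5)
Your proof is correct, but it takes a different route from the paper.

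\textbf{The paper's argument.} The paper never compares norms. It takes $b$ with $\pi_\omega(b)=0$ and observes that the diagonal coefficients $\langle\pi_\omega(b^*b)\delta_\gamma,\delta_\gamma\rangle$ are the values of the continuous function $\widetilde P_0(b^*b)\in C(\trans)$ at the points $r(\gamma)$ of the orbit of $\omega$. This function therefore vanishes on a dense set, hence everywhere. Faithfulness of the conditional expectation (Step~5 of Theorem~\ref{thm:grading}) then gives $b=0$.

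\textbf{How the two correspond.} In the paper, continuity of $\widetilde P_0(b^*b)$ does the job of your lower semicontinuity of $\eta\mapsto\|\pi_\eta(a)\|$. Faithfulness of $\widetilde P_0$ replaces your combination of $\|a\|_r=\sup_\eta\|\pi_\eta(a)\|$ with orbit invariance.

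\textbf{What each route buys.} Your version is the standard general-groupoid argument. It does not use the grading or the expectation. It gives isometry of $\pi_\omega$ directly, rather than through the automatic isometry of injective $*$-homomorphisms between C*-algebras. The paper's route is shorter given Step~5, and it transfers verbatim to the magnetic twist (Remark~\ref{rem:magnetic}).

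\textbf{A possible simplification.} Your two approximation steps (first by $C_\co$, then by locally constant sections) can be dropped. For any $a\in\mathfrak C$ the symbol $j(a)$ is continuous on $\GL$. So the finitely many entries $j(a)(\eta'-x,\,y-x)$, with $x,y\in\eta\cap\bar B_\Lambda$, already depend continuously on $\eta'$ in the cylinder $\trans_\Lambda(\eta)$ of Lemma~\ref{lem:collapse}. That gives lower semicontinuity immediately.

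\textbf{A harmless slip.} Right multiplication by $(\omega,x)$ maps $\mathcal G_\omega$ onto $\mathcal G_{\omega-x}$, not the other way round. Use its inverse for the map you wrote.
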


\begin{proof}
Write $E=\widetilde P_0$. Suppose that $\pi_\omega(b)=0$.
For every $\gamma\in\mathcal G_\omega=s^{-1}(\omega)$,
the matrix-coefficient formula gives
\[
E(b^*b)(r(\gamma))
=
\big\langle
\pi_\omega(b^*b)\delta_\gamma,\delta_\gamma
\big\rangle
=0.
\]
Thus the continuous function $E(b^*b)\in C(\trans)$ vanishes
on the orbit of $\omega$. By density it vanishes everywhere.
Faithfulness of $E$, proved in Theorem~\ref{thm:grading},
implies $b=0$.

If $L$ is repetitive, the groupoid is minimal by Step~5 of
Proposition~\ref{prop:basic}, so every orbit is dense.
Finally, matrix amplification preserves faithfulness.
\end{proof}

\begin{example}[Random binary alloys]\label{ex:random}
Let $\omega\in\{A,B\}^{\Z^d}$ have a Bernoulli distribution assigning
positive probability to both symbols. This is a decorated Delone set
(Remark~\ref{rem:decorated}), and its hull is almost surely the full
shift. FLC holds, so Theorem~\ref{thm:main}(i)--(ii) applies, but
minimality fails because the hull contains periodic configurations.
By Lemma~\ref{lem:densefaithful}, $\pi_\omega$ is faithful whenever
$\omega$ has a dense orbit, which holds almost surely. Invertibility
on such a configuration therefore transfers to the entire hull;
invertibility on an arbitrary configuration need not do so. This
example overlaps the setting of \cite{CLL}.
\end{example}

\begin{remark}[Structural consequence: the dual-action route]\label{rem:dualaction}
Step 4 exhibits more than a grading: a genuine continuous action of the compact abelian group $\T^N=\widehat{\Z^N}$ on the unital $\Cst$-algebra $\mathfrak{C}$ whose spectral subspaces are exactly the $\mathfrak{C}_g$. Consequently part (i) of the Main Theorem is a verbatim application of \cite[Thm.\ 2.8]{JM} (dual actions of compact abelian groups), which was written for precisely this mechanism. The weighted statement
is established separately in Proposition~\ref{prop:weightedSI}, while
the graded language also organizes the treatment of graded ideals
\cite[Thm.~2.16]{JM}.
\end{remark}

\begin{remark}[A trap to avoid]\label{rem:trap}
One should \emph{not} attempt to prove norm reconstruction $a=\sum_g\widetilde P_g(a)$ for all $a\in\mathfrak{C}$: it is false already for $\GL=\Z$ (then $\mathfrak{C}=C(\T)$ and the claim is uniform convergence of Fourier series of continuous functions, refuted by du Bois-Reymond). \cite[Def.\ 2.1]{JM} is calibrated exactly so as not to require it. For $a\in\ell^1(\mathfrak C)$, the reconstruction
$a=\sum_g\widetilde P_g(a)$ is absolutely convergent in
the C*-norm, hence unconditionally convergent.
Membership in $\ell^1(\mathfrak C)$ is characterized by
absolute summability of the coefficient norms, not here
by an assertion about unconditional convergence alone.
\end{remark}

\subsection{Spectral invariance in the Delone setting}
\begin{definition}[Weights]\label{def:weight}
A \emph{weight} on $\Z^N$ is a function $\nu:\Z^N\to[1,\infty)$ which
is submultiplicative, $\nu(g+h)\le\nu(g)\nu(h)$, symmetric,
$\nu(-g)=\nu(g)$, and normalized, $\nu(0)=1$ \cite[Def.~2.19]{JM}. It
satisfies the \emph{GRS condition} if $\lim_{n\to\infty}\nu(ng)^{1/n}=1$
for every $g\in\Z^N$; the limit exists by Fekete's subadditivity lemma.
Every weight in this paper is of this kind. Symmetry makes the
involution of $\ell^{1,\nu}$ isometric, and normalization gives
$\|\mathbf 1\|_{1,\nu}=1$.
\end{definition}

\begin{definition}\label{def:l1graded}
With the grading of Theorem~\ref{thm:grading} and a weight $\nu$ (Definition~\ref{def:weight}),
\[
\ell^{1,\nu}(\mathfrak{C}):=\Big\{\Phi\in\mathfrak{C}\ :\ \|\Phi\|_{1,\nu}:=\sum_{g\in\Z^N}\nu(g)\,\big\|\widetilde P_g(\Phi)\big\|_{\mathfrak{C}}<\infty\Big\},
\qquad \ell^{1}(\mathfrak{C}):=\ell^{1,1}(\mathfrak{C}).
\]
\end{definition}

\begin{theorem}[Main Theorem]\label{thm:main}
Let $L\subset\R^d$ be an FLC Delone set and $\Fell$ admissible. Then:
\begin{enumerate}[label=\emph{(\roman*)},itemsep=2pt]
\item $\ell^{1}(\mathfrak{C})$ is a reduced, symmetric Banach $*$-algebra, inverse closed in $\mathfrak{C}$;
\item if $\nu$ is a GRS weight, then $\ell^{1,\nu}(\mathfrak{C})$ is a symmetric Banach $*$-algebra, inverse closed in $\mathfrak{C}$, and spectrally invariant in every faithful nondegenerate representation
(Proposition~\ref{prop:weightedSI});
\item if moreover $L$ is repetitive, then for \emph{every} $\omega\in\trans$ the regular representation $\pi_\omega$ on $\ell^2(\omega)\otimes\Cx^n$ is faithful and unital, and consequently, for every GRS weight $\nu$, the algebra $\pi_\omega\big[\ell^{1,\nu}(\mathfrak{C})\big]$ is inverse closed in $\Bd\big(\ell^2(\omega)\otimes\Cx^n\big)$.
\end{enumerate}
\end{theorem}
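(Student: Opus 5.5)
The proof runs in three stages, one for each part of the statement, and takes Theorem~\ref{thm:grading} as its foundation.

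\emph{Part (i).} By Theorem~\ref{thm:grading}, $\mathfrak C$ is a unital topologically $\Z^N$-graded $\Cst$-algebra. Remark~\ref{rem:dualaction} upgrades this grading to a continuous dual action of $\T^N$ whose spectral subspaces are the $\mathfrak C_g$. I would then apply \cite[Thms.~2.4 and~2.8]{JM} directly. Since $\Z^N$ is abelian, hence rigidly symmetric, these results give that $\ell^1(\mathfrak C)$ is a symmetric Banach $*$-algebra, inverse closed in $\mathfrak C$.

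Two checks are needed to place $\ell^1(\mathfrak C)$ in their framework:
\begin{itemize}
\item \emph{Banach algebra.} The inequality $\|\widetilde P_{g}(\Phi\Psi)\|\le\sum_{h}\|\widetilde P_h\Phi\|\,\|\widetilde P_{g-h}\Psi\|$ follows from absolute convergence of $\Phi=\sum_g\widetilde P_g\Phi$ (Remark~\ref{rem:trap}) and $\mathfrak C_h\mathfrak C_k\subset\mathfrak C_{h+k}$.
\item \emph{Reducedness.} If $\Phi\neq0$ then some $\widetilde P_g\Phi\neq0$. Hence $\Phi\neq 0$ in $\mathfrak C$, so the $\Cst$-norm is a faithful $\Cst$-seminorm and the $*$-radical vanishes.
\end{itemize}
In case (ii) the same argument is run after matrix amplification (Remark~\ref{rem:symbols}(c)).

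\emph{Part (ii).} This is the main obstacle, since the cited unweighted theorem does not cover weights. The plan is a spectral-radius transfer in the style of Hulanicki and \cite{FGL}. First, submultiplicativity of $\nu$ together with the homogeneous estimate above shows that $\ell^{1,\nu}(\mathfrak C)$ is a Banach $*$-algebra; the involution is isometric because $\nu$ is symmetric. For self-adjoint $\Phi$ I aim to prove
\[
\rho_{\ell^{1,\nu}}(\Phi)=\|\Phi\|_{\mathfrak C},
\]
after first truncating to finite degree support, which is dense in the weighted norm. For $\Phi$ supported in a finite set $F$, the power $\Phi^n$ is supported in $nF$, and
\[
\|\Phi^n\|_{1,\nu}\le \Big(\max_{g\in nF}\nu(g)\Big)\,\#(nF)\,\|\Phi^n\|_{\mathfrak C},
\]
using $\|\widetilde P_g\|\le1$. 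The factor $\#(nF)$ grows polynomially in $n$. The GRS condition, combined with submultiplicativity over a finite generating set of $F$, makes $\max_{nF}\nu$ subexponential. Taking $n$-th roots therefore gives $\rho_{\ell^{1,\nu}}(\Phi)\le\|\Phi\|_{\mathfrak C}$.

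The reverse inequality, and passage to general $\Phi$, come from Hulanicki's lemma: equality of spectral radii on a dense $*$-subalgebra yields symmetry and inverse-closedness in $\mathfrak C$. Extending from finitely supported elements requires care. I would use the unweighted part (i) to get $\rho_{\ell^1}=\rho_{\mathfrak C}$ on self-adjoints, and then compare $\rho_{\ell^{1,\nu}}$ with $\rho_{\ell^1}$ by a Barnes/Brandenburg-type argument for GRS weights. Spectral invariance in an arbitrary faithful nondegenerate representation then follows, because such a representation is isometric on the $\Cst$-algebra $\mathfrak C$.

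\emph{Part (iii).} Lemma~\ref{lem:densefaithful} gives faithfulness of every $\pi_\omega$ when $L$ is repetitive. Unitality holds because $\pi_\omega(\mathbf 1_\trans)$ is the identity on $\ell^2(\mathcal G_\omega)$, so $\pi_\omega$ is nondegenerate. Part (ii) then yields inverse-closedness of $\pi_\omega[\ell^{1,\nu}(\mathfrak C)]$ in $\Bd(\ell^2(\omega)\otimes\Cx^n)$, since $\pi_\omega(\mathfrak C)$ is a $\Cst$-subalgebra and $\Cst$-subalgebras are inverse closed.
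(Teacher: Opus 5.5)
Your overall architecture is the paper's: (i) from \cite[Thms.~2.4, 2.8]{JM} via the gauge action, (ii) from $r_{\ell^{1,\nu}}=r_{\ell^1}=\|\cdot\|_{\mathfrak C}$ on self-adjoint elements plus Hulanicki's lemma, and (iii) from Lemma~\ref{lem:densefaithful} and spectral permanence. Parts (i) and (iii) are fine.

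The gap is in (ii). The comparison $r_{\ell^{1,\nu}}(\Phi)=r_{\ell^{1}}(\Phi)$ for \emph{every} $\Phi\in\ell^{1,\nu}(\mathfrak C)$ carries the whole argument, and you only name it; what you actually prove does not reach it. Your finite-support estimate gives $r_{\ell^{1,\nu}}(\Phi)=\|\Phi\|_{\mathfrak C}$ only for self-adjoint $\Phi$ of finite degree support. Hulanicki's lemma (Lemma~\ref{lem:hulanicki}) needs this identity for all self-adjoint elements, and it does not pass to limits. In a noncommutative Banach algebra the spectral radius is merely upper semicontinuous, which gives $r(\Phi)\ge\limsup_k r(\Phi_k)$, the direction you already have for free. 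Running Hulanicki's argument on the dense subalgebra does make each finitely supported element invertible in $\ell^{1,\nu}(\mathfrak C)$ when it is invertible in $\mathfrak C$. But approximating a general invertible $\Phi$ by such elements gives no bound on $\|\Phi_k^{-1}\|_{1,\nu}$, and such a bound is essentially norm control. So ``equality of spectral radii on a dense $*$-subalgebra yields symmetry and inverse-closedness'' is not a valid principle. A Brandenburg-type argument does not close the gap either: it needs a differential norm inequality, which is unavailable for a general GRS weight (the paper obtains one only for polynomial and stretched-exponential weights).

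The missing idea, Lemma~\ref{lem:radtransfer}, is to pass to the commutative scalar majorant $\phi(g):=\|\widetilde P_g\Phi\|$.
\begin{itemize}
\item Coefficient majorization gives $\|\Phi^n\|_{1,\nu}\le\|\phi^{*n}\|_{\ell^1_\nu(\Z^N)}$, hence $r_{\ell^{1,\nu}}(\Phi)\le r_{\ell^1_\nu(\Z^N)}(\phi)$.
\item In the commutative Beurling algebra, GRS forces every character to be evaluation at some $z\in\T^N$. Since $\phi\ge0$, this gives $r_{\ell^1_\nu(\Z^N)}(\phi)=\max_z\bigl|\sum_g\phi(g)z^g\bigr|=\|\phi\|_{1}=\|\Phi\|_{\ell^1}$.
\item Applying this to $\Phi^m$ and letting $m\to\infty$ gives $r_{\ell^{1,\nu}}(\Phi)\le r_{\ell^1}(\Phi)$ for all $\Phi$.
\item Part (i) supplies $r_{\ell^1}(\Phi)=\|\Phi\|_{\mathfrak C}$ for self-adjoint $\Phi$, and Hulanicki's lemma followed by Shirali--Ford finishes (ii).
\end{itemize}
Your own finite-support computation can be salvaged at exactly this scalar level. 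It gives $r_{\ell^1_\nu}(\phi)=\|\phi\|_1$ for finitely supported $\phi\ge0$. In a commutative Banach algebra the spectral radius is subadditive, hence $1$-Lipschitz, so this identity extends by density. That extension is precisely what fails in the noncommutative algebra $\ell^{1,\nu}(\mathfrak C)$.
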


\noindent The proof is given at the end of this subsection, after the supporting Lemmas~\ref{lem:radtransfer} and~\ref{lem:hulanicki} and Proposition~\ref{prop:weightedSI}.

The weighted theorem \cite[Thm.~2.20]{JM} is formulated using
uniform GRS and shell-comparability assumptions
\cite[(2.11)--(2.12)]{JM}. For the present grading group,
ordinary GRS already implies uniform GRS. Indeed, for every
$\varepsilon>0$, submultiplicativity and GRS on the standard basis
give a constant $C_\varepsilon$ such that
\[
  \nu(g)\le C_\varepsilon e^{\varepsilon\|g\|_1}
  \qquad(g\in\Z^N).
\]
Every word ball for a finite generating set is contained in an
$\ell^1$-ball of radius proportional to its word radius, which proves
the uniform condition. Shell comparability is not assumed.
The following elementary transfer argument combines scalar
Beurling-algebra spectral theory with unweighted graded spectral
invariance. It handles the exact, possibly non-radial physical weights
without a shell assumption; no priority claim is made for the GRS
principle.

Throughout, $\mathfrak B$ denotes a unital C*-algebra, topologically
graded over $\Z^N$ with closed components $(\mathfrak B_g)_{g\in\Z^N}$
and $1\in\mathfrak B_0$. The summable-coordinate definition
of $\ell^{1,\nu}(\mathfrak B)$, with the norms of the homogeneous
components inherited from $\mathfrak B$; compare \cite[\S2]{JM}.
The following elementary observation fixes the Banach-algebra and
inclusion properties used in the spectral-radius arguments.

\begin{lemma}[Weighted coordinate algebra]\label{lem:weightedbanach}
For a normalized symmetric submultiplicative weight $\nu$, the space
\[
\mathcal A_\nu=
\left\{(b_g)_{g\in\Z^N}:b_g\in\mathfrak B_g,\quad
\sum_g\nu(g)\|b_g\|_{\mathfrak B}<\infty\right\}
\]
is a unital Banach $*$-algebra for
\[
(bd)_g=\sum_h b_h d_{g-h},\qquad (b^*)_g=b_{-g}^*,
\]
with isometric involution and unit of norm one. The summation map
$\iota(b)=\sum_g b_g$ is a continuous injective unital
$*$-homomorphism into $\mathfrak B$, and
$\|\iota(b)\|_{\mathfrak B}\le\|b\|_{\mathcal A_\nu}$.
Its image is dense in $\mathfrak B$.
For the concrete grading of Theorem~\ref{thm:grading}, this image
is exactly the coefficient-defined space in
Definition~\ref{def:l1graded}, with the stated norm.
\end{lemma}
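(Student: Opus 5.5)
The approach is to treat $\mathcal A_\nu$ as a weighted $\ell^1$-direct sum of the Banach spaces $\mathfrak B_g$, and to check the algebra axioms on it coordinatewise. Completeness is the usual argument for weighted $\ell^1$-sums: a Cauchy sequence is Cauchy in each closed subspace $\mathfrak B_g$, the coordinatewise limits lie in $\mathfrak B_g$, and Fatou's lemma for sums shows they define an element of $\mathcal A_\nu$ and that the convergence holds in norm.

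For the product, the grading gives $b_hd_{g-h}\in\mathfrak B_h\mathfrak B_{g-h}\subseteq\mathfrak B_g$. Submultiplicativity gives
\[
\sum_g\nu(g)\Big\|\sum_h b_hd_{g-h}\Big\|\le\sum_{g,h}\nu(h)\nu(g-h)\|b_h\|\,\|d_{g-h}\|=\|b\|_{\mathcal A_\nu}\|d\|_{\mathcal A_\nu}.
\]
So the defining series converges absolutely in the closed space $\mathfrak B_g$, and the norm is submultiplicative. Associativity and bilinearity follow by Fubini for absolutely summable double series. For the involution, $\mathfrak B_g^*=\mathfrak B_{-g}$ and symmetry of $\nu$ make $b\mapsto b^*$ an isometric involution. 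The identity $(bd)^*=d^*b^*$ is a reindexing. The unit is $(1\,\delta_{g,0})$, with norm $\nu(0)\|1\|=1$.

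Next, $\iota$ is well defined with $\|\iota(b)\|\le\sum_g\|b_g\|\le\|b\|_{\mathcal A_\nu}$, because $\nu\ge1$. It is linear, unital and $*$-preserving. Multiplicativity follows by multiplying the two absolutely convergent series in $\mathfrak B$ and regrouping by degree.

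The main obstacle is injectivity, since coordinates are not automatically recoverable from sums in a general graded algebra. Here one uses the topological grading: the coefficient maps $\widetilde P_g$ of the grading (the maps $E_g$ of \cite[Def.~2.1]{JM}) are contractive, equal the identity on $\mathfrak B_g$, and vanish on $\mathfrak B_h$ for $h\neq g$. By continuity, $\widetilde P_g(\iota(b))=b_g$, so $\iota(b)=0$ forces $b=0$.

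Density of the image holds because it contains the algebraic direct sum of the $\mathfrak B_g$, which is dense by the topological grading axioms.

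For the concrete grading of Theorem~\ref{thm:grading}, the identity $\widetilde P_g(\iota(b))=b_g$ shows that $\|\iota(b)\|_{1,\nu}=\|b\|_{\mathcal A_\nu}$ in the sense of Definition~\ref{def:l1graded}, so the image lies in $\ell^{1,\nu}(\mathfrak C)$. Conversely, let $\Phi\in\mathfrak C$ have finite $\|\Phi\|_{1,\nu}$, and put $b=(\widetilde P_g\Phi)_g$. Then $b\in\mathcal A_\nu$, and $\Phi':=\iota(b)$ satisfies $\widetilde P_g(\Phi-\Phi')=0$ for all $g$. Faithfulness of $\widetilde P_0$ kills an element all of whose coefficients vanish: apply $\widetilde P_0$ to $x^*x$, or use the injective symbol map $j$ together with $j(\widetilde P_ga)=j(a)\mathbf 1_{c^{-1}(g)}$ from Remark~\ref{rem:symbols}. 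Hence $\Phi=\Phi'$, and the two spaces coincide isometrically.
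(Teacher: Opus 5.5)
Your proposal is correct and follows the paper's proof essentially step for step. The shared steps are completeness of the weighted $\ell^1$-sum, Tonelli plus submultiplicativity for the product, symmetry and normalization for the involution and unit, $\nu\ge1$ for continuity of $\iota$, the contractive coefficient projections for injectivity, density of finite homogeneous sums, and the injective symbol map $j$ with $j(\widetilde P_g a)=j(a)\mathbf 1_{c^{-1}(g)}$ for the concrete identification. Your alternative via faithfulness of $\widetilde P_0$ also works here, but only because the sum-of-squares identity in Step~5 of Theorem~\ref{thm:grading} expresses $\widetilde P_0(x^*x)$ through the coefficients of $x$; the bare instruction ``apply $\widetilde P_0$ to $x^*x$'' needs that identity to go through.
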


\begin{proof}
The weighted $\ell^1$-sum of the closed Banach spaces
$\mathfrak B_g$ is complete. Each product-coordinate series
converges absolutely in the closed space $\mathfrak B_g$, and
Tonelli's theorem and submultiplicativity give
\[
\sum_g\nu(g)\|(bd)_g\|
\le\sum_{g,h}\nu(h)\nu(g-h)\|b_h\|\|d_{g-h}\|
=\|b\|_{\mathcal A_\nu}\|d\|_{\mathcal A_\nu}.
\]
The involution and unit assertions follow from symmetry and
normalization of $\nu$. Since $\nu\ge1$, summation converges
absolutely in $\mathfrak B$ and respects both operations.
The continuous coefficient projections satisfy
$\widetilde P_h(\iota(b))=b_h$, proving injectivity. Finite
homogeneous sums are contained in the image and are dense in
$\mathfrak B$.
For the concrete coefficient definition, if
$\sum_g\nu(g)\|\widetilde P_g(a)\|<\infty$, set
$b=\sum_g\widetilde P_g(a)$ in $\mathfrak C$. Every coefficient
of $a-b$ vanishes. The symbol identity from Step~5 of
Theorem~\ref{thm:grading} gives
$j(a-b)1_{c^{-1}(g)}=0$ for all $g$; injectivity of $j$ yields
$a=b$. This proves the final identification, without assuming
Fourier-series convergence for arbitrary elements of $\mathfrak C$.
\end{proof}

The image of $\mathcal A_\nu$ is denoted by
$\ell^{1,\nu}(\mathfrak B)$. No GRS assumption was needed in this
lemma. The case of interest is the algebra $\mathfrak C$ from
Convention~\ref{conv:coeffscope}.

\begin{lemma}[Spectral radius transfer]\label{lem:radtransfer}
Let $\mathfrak B$ be a unital C*-algebra topologically graded over
$\Z^N$, with the weighted coordinate algebras of
Lemma~\ref{lem:weightedbanach}, and let $\nu$ be a GRS weight.
Then, for every $\Phi\in\ell^{1,\nu}(\mathfrak B)$,
\[
r_{\ell^{1,\nu}(\mathfrak B)}(\Phi)=r_{\ell^{1}(\mathfrak B)}(\Phi).
\]
\end{lemma}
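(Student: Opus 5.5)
The plan is to prove the two inequalities between the spectral radii separately; only one of them uses the GRS condition.

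\emph{The easy inequality.} Since $\nu\ge1$, the inclusion $\ell^{1,\nu}(\mathfrak B)\hookrightarrow\ell^1(\mathfrak B)$ is a contractive unital homomorphism, so $\|\Phi^k\|_{1}\le\|\Phi^k\|_{1,\nu}$ for every $k$. Gelfand's formula then gives $r_{\ell^1}(\Phi)\le r_{\ell^{1,\nu}}(\Phi)$.

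\emph{The reverse inequality.} First reduce to finitely supported elements. Fix $\varepsilon>0$. By the estimate recorded before the lemma, GRS and submultiplicativity give $\nu(g)\le C_\varepsilon e^{\varepsilon\|g\|_1}$. Take $\Phi$ with finite support $F\subset\{\|g\|_1\le m\}$. Then $\Phi^k$ is supported in $\{\|g\|_1\le km\}$, so
\[
\|\Phi^k\|_{1,\nu}\le C_\varepsilon e^{\varepsilon km}\|\Phi^k\|_1 .
\]
Taking $k$-th roots and letting $k\to\infty$ yields $r_{\ell^{1,\nu}}(\Phi)\le e^{\varepsilon m}r_{\ell^1}(\Phi)$. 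This bound depends on $m$, so it does not survive the limit $\varepsilon\to0$ once one approximates a general $\Phi$. The cleaner route is the standard Beurling-algebra argument by splitting.

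Write $\Phi=\Phi_F+\Phi_R$, where $\Phi_F$ is the truncation to a finite set $F$ and $\|\Phi_R\|_{1,\nu}<\delta$. Expand $\Phi^k$ as a sum of words in $\Phi_F$ and $\Phi_R$, and estimate each word with $j$ tail letters by splitting the norm multiplicatively: bound the tail letters in $\ell^{1,\nu}$ by $\delta^j$, and bound each block of consecutive $\Phi_F$ letters using the previous estimate.

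The difficulty is that the block estimate uses $\ell^1$-norms of powers of $\Phi_F$, not of $\Phi$. One therefore also needs $r_{\ell^1}(\Phi_F)\to r_{\ell^1}(\Phi)$, which is upper semicontinuity of the spectral radius in $\ell^1$ but is not automatic. This suggests a different route.

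\emph{Alternative route via the gauge action.} I would instead work with the gauge action, which exists abstractly for any topological $\Z^N$-grading on the coordinate algebras by $z\cdot(b_g)=(z^gb_g)$. The relevant fact is that $\ell^{1,\nu}$ with a GRS weight sits inside every $\ell^{1,\nu_\rho}$ with $\nu_\rho(g)=\rho^{\|g\|_1}$ for $\rho>1$. For those exponential weights, the action extends to the complexified polyannulus, and $z\mapsto\alpha_z(\Phi)$ is holomorphic in $\ell^1$ there. One would then attempt to show that $r_{\ell^{1,\nu_\rho}}(\Phi)\to r_{\ell^1}(\Phi)$ as $\rho\to1$, using that $r$ is continuous along the analytic family $\rho\mapsto\alpha_{\rho}(\Phi)$. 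Here one exploits that $\alpha_{\rho}$ is an automorphism of the finite-support algebra. The requisite domination of $\nu$ by $\nu_\rho$ holds only up to a constant, $\nu\le C\nu_\rho$. Since constants disappear under $k$-th roots, the whole argument reduces to the exponential case.

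\emph{Main obstacle.} The hardest step is this limit $\rho\downarrow1$. I would try to handle it via Gelfand's formula combined with Hadamard three-circle or subharmonicity: $\log r$ along the analytic family $z\mapsto\alpha_z(\Phi)$ is subharmonic and equals $\log r_{\ell^1}(\Phi)$ on the torus, since $\alpha_z$ is isometric for $|z_i|=1$. Upper semicontinuity of $r$ then controls nearby radii.

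If that fails, the fallback is to pass to the commutative Gelfand theory of the closed subalgebra generated by $\Phi$, $\Phi^*$ and a resolvent. One would identify its maximal ideal space fibrewise over $\T^N$ and invoke the scalar Beurling fact that GRS weights have characters only on $\T^N$.
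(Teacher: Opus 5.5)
Your easy inequality and your finite-support estimate are correct. For $\Phi$ supported in $\{\|g\|_1\le m\}$, the bound $\|\Phi^k\|_{1,\nu}\le C_\varepsilon e^{\varepsilon km}\|\Phi^k\|_1$ gives $r_{\ell^{1,\nu}}(\Phi)\le r_{\ell^1}(\Phi)$ after letting $\varepsilon\to0$. The gap is the passage to a general $\Phi\in\ell^{1,\nu}(\mathfrak B)$, which none of your routes accomplishes.

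\begin{itemize}
\item \emph{Approximation.} Upper semicontinuity of the spectral radius is in fact automatic (Newburgh), contrary to what you write. But it says $\limsup r(\Phi_F)\le r(\Phi)$: it bounds the approximants by the limit, not the limit by the approximants. So the inequality for truncations gives no upper bound on $r_{\ell^{1,\nu}}(\Phi)$.
\item \emph{Word expansion.} In a noncommutative algebra you must estimate each $\Phi_F$-block separately, paying one factor $C_\varepsilon$ per block. A word with $j$ tail letters then costs $C_\varepsilon^{\,j+1}\delta^j$. Since $\varepsilon$ must be chosen after $m=m(F)$, which depends on $\delta$, you cannot make $C_\varepsilon\delta$ small.
\item \emph{Gauge route.} The inclusion is reversed: $\nu\le C\nu_\rho$ gives $\ell^{1,\nu_\rho}(\mathfrak B)\subseteq\ell^{1,\nu}(\mathfrak B)$, not the other way. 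An element with polynomially decaying coefficients need not lie in any $\ell^{1,\nu_\rho}$ with $\rho>1$. The reduction to exponential weights therefore covers only elements that already decay exponentially, and you are back to the approximation problem.
\item \emph{Fallback.} The algebra generated by $\Phi$ and $\Phi^*$ is commutative only when $\Phi$ is normal. You give no way to map the characters of the commutative algebra generated by $\Phi$ and its resolvents to $\T^N$, and the gauge action does not preserve that algebra in general.
\end{itemize}

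The missing idea is to leave the noncommutative algebra altogether. Put $\phi(g):=\|\Phi_g\|$. The grading and submultiplicativity of $\nu$ give $\|(\Phi\Psi)_g\|\le(\phi*\psi)(g)$. Hence $\|\Phi^n\|_{\ell^{1,\nu}}\le\|\phi^{*n}\|_{\ell^1_\nu(\Z^N)}$ and $r_{\ell^{1,\nu}(\mathfrak B)}(\Phi)\le r_{\ell^1_\nu(\Z^N)}(\phi)$, which is a spectral radius in a \emph{commutative} scalar Beurling algebra. There GRS forces every character to be evaluation at some $z\in\T^N$. For $\phi\ge0$ this gives $r(\phi)=\max_{z\in\T^N}|\sum_g\phi(g)z^g|=\sum_g\phi(g)=\|\Phi\|_{\ell^1}$. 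Your own finite-support estimate would also work at this stage: for commuting elements $r(a+b)\le r(a)+\|b\|$, so the approximation step goes through.

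This yields only $r_{\ell^{1,\nu}}(\Psi)\le\|\Psi\|_{\ell^1}$ for every $\Psi$. The second ingredient is to apply it to $\Psi=\Phi^m$:
\[
r_{\ell^{1,\nu}}(\Phi)=r_{\ell^{1,\nu}}(\Phi^m)^{1/m}\le\|\Phi^m\|_{\ell^1}^{1/m}\xrightarrow[m\to\infty]{}r_{\ell^1}(\Phi).
\]
This also removes your worry about controlling $\ell^1$-norms of powers of $\Phi_F$. You aim first for the crude bound $\|\Phi\|_{\ell^1}$ and recover $r_{\ell^1}(\Phi)$ through Gelfand's formula.
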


\begin{proof}
Since $\nu\ge1$, $r_{\ell^1}\le r_{\ell^{1,\nu}}$. For the converse put
$\phi(g):=\|\Phi_g\|$. The grading and submultiplicativity give
$\|(\Phi\Psi)_g\|\le(\phi*\psi)(g)$, hence
$\|\Phi^n\|_{\ell^{1,\nu}}\le\|\phi^{*n}\|_{\ell^1_\nu(\Z^N)}$ and
$r_{\ell^{1,\nu}(\mathfrak B)}(\Phi)\le r_{\ell^1_\nu(\Z^N)}(\phi)$.
For clarity, the scalar Beurling-algebra step can be checked directly;
compare \cite{FGL}. If $\chi$ is a character of $\ell^1_\nu(\Z^N)$,
then
\[
|\chi(\delta_g)|^m=|\chi(\delta_{mg})|\le\nu(mg).
\]
GRS gives $|\chi(\delta_g)|\le1$; applying the same argument to
$-g$ and using $\delta_g*\delta_{-g}=\delta_0$ gives equality.
Thus $\chi(\delta_g)=z^g$ for some $z\in\T^N$. Conversely,
every such evaluation is continuous because $\nu\ge1$.
Density of finite supports identifies all characters, so for
$\phi\ge0$,
\[
r_{\ell^1_\nu(\Z^N)}(\phi)=\max_{z\in\T^N}\Big|\sum_g\phi(g)z^g\Big|
=\sum_g\phi(g)=\|\Phi\|_{\ell^1(\mathfrak B)}.
\]
Thus $r_{\ell^{1,\nu}}(\Psi)\le\|\Psi\|_{\ell^1}$ for every
$\Psi\in\ell^{1,\nu}(\mathfrak B)$; applied to $\Psi=\Phi^m$,
\[
r_{\ell^{1,\nu}}(\Phi)=r_{\ell^{1,\nu}}(\Phi^m)^{1/m}
\le\|\Phi^m\|_{\ell^1}^{1/m}\xrightarrow[m\to\infty]{}r_{\ell^1}(\Phi).
\qedhere
\]
\end{proof}

The scalar case is classical Beurling-algebra theory
\cite{FGL}. Weighted operator-valued Wiener theorems also precede
this formulation; see \cite{Baskakov97,KB25}. The role of
Lemma~\ref{lem:radtransfer} here is to provide a direct graded proof.
The next criterion is the classical Hulanicki lemma \cite{Hul};
its proof is included for completeness.
\begin{lemma}[{Hulanicki \cite{Hul}}]\label{lem:hulanicki}
Let $\mathcal{A}$ be a unital Banach $*$-algebra, $\mathcal{B}$ a
unital C*-algebra, and $\iota:\mathcal{A}\to\mathcal{B}$ an injective
unital $*$-homomorphism such that
\[
r_{\mathcal{A}}(a)=\|\iota(a)\|_{\mathcal{B}}
\qquad\text{for every }a=a^*\in\mathcal{A}.
\]
Then $\mathrm{sp}_{\mathcal{A}}(a)=\mathrm{sp}_{\mathcal{B}}(\iota(a))$
for \emph{every} $a\in\mathcal{A}$; in particular $\iota(\mathcal{A})$
is inverse-closed in $\mathcal{B}$.
\end{lemma}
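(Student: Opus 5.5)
The plan is to prove the inclusion $\mathrm{sp}_{\mathcal B}(\iota(a))\subseteq\mathrm{sp}_{\mathcal A}(a)$, since the reverse inclusion is automatic: $\iota$ is a unital homomorphism, so an inverse in $\mathcal A$ maps to an inverse in $\mathcal B$. The argument first handles invertibility of positive-type elements $b^*b$, then transfers the result to general elements by polar-type factorization.

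\emph{Step 1: self-adjoint elements have real $\mathcal A$-spectrum.} Let $h=h^*\in\mathcal A$ and suppose $\lambda=\alpha+i\beta\in\mathrm{sp}_{\mathcal A}(h)$ with $\beta\neq0$. Consider $u_t:=e^{ith}$, defined by the power series in $\mathcal A$, for $t\in\R$. Since the involution is continuous on the Banach $*$-algebra, $u_t^*=e^{-ith}$. We would like to say that $u_t$ is unitary in $\mathcal A$; it is, because $u_t^*u_t=e^{-ith}e^{ith}=1$, as $h$ commutes with itself. The hypothesis cannot be applied to $u_t$ directly, since $u_t$ is not self-adjoint. However, $r_{\mathcal A}(x)^2\le r_{\mathcal A}(x^*x)$ for every $x$, because $r(x)^2=r(x)r(x^*)$ and $r(x^*)=r(x)$ hold in general. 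To make this precise one needs $r(xy)\le r(x)r(y)$ for commuting elements, or one must work directly with $x^*x$. So a cleaner route is to apply the hypothesis to $x^*x$. Since $u_t^*u_t=1$, this gives $r_{\mathcal A}(u_t)\le\|u_t\|^{1/n}$-type control. More simply, use Gelfand theory on a maximal commutative $*$-subalgebra $\mathcal M$ containing $h$, for which $\mathrm{sp}_{\mathcal M}=\mathrm{sp}_{\mathcal A}$. A character $\chi$ of $\mathcal M$ with $\chi(h)=\lambda$ satisfies $|\chi(u_t)|=e^{-t\beta}$. On the other hand, $u_t$ and $u_t^*$ commute, and
\[
r_{\mathcal M}(u_t)^2\le r_{\mathcal M}(u_t^*u_t)\le r_{\mathcal A}(u_t^*u_t)=1.
\]
Here the first inequality uses submultiplicativity of $r$ on the commutative algebra $\mathcal M$, together with $r(u_t^*)=r(u_t)$. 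Hence $e^{-t\beta}\le1$ for all $t$, which forces $\beta=0$.

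\emph{Step 2: inverting $\iota(b^*b)$.} Let $x=b^*b$ with $\iota(x)$ invertible in $\mathcal B$. By Step~1, $\mathrm{sp}_{\mathcal A}(x)\subset\R$. Using $r_{\mathcal A}(x)=\|\iota(x)\|=:\mu$, we have $\mathrm{sp}_{\mathcal A}(x)\subset[-\mu,\mu]$. Invertibility of $\iota(x)\ge0$ in the C*-algebra gives $\iota(x)\ge\delta$ for some $\delta>0$. Put $y=\mu-x$, which is self-adjoint. The C*-norm satisfies $\|\iota(y)\|\le\mu-\delta$, so by hypothesis $r_{\mathcal A}(y)\le\mu-\delta<\mu$. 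Consequently $x=\mu(1-y/\mu)$ is invertible in $\mathcal A$ by the Neumann series, with the spectral radius criterion ensuring convergence.

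\emph{Step 3: general elements.} If $\iota(a)$ is invertible in $\mathcal B$, then so are $\iota(a^*a)$ and $\iota(aa^*)$. By Step~2, $a^*a$ and $aa^*$ are invertible in $\mathcal A$. Therefore $a$ has a left inverse $(a^*a)^{-1}a^*$ and a right inverse $a^*(aa^*)^{-1}$, so $a$ is invertible in $\mathcal A$. Applying this to $a-\lambda$ gives equality of spectra, and the case $\lambda=0$ is inverse-closedness.

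The main obstacle is Step~1, namely showing that the spectrum of a self-adjoint element is real. The delicate point is justifying $r(u^*u)\ge r(u)^2$. I would handle it through the commutative subalgebra generated by $h$ and $h^*=h$, where the spectral radius is submultiplicative. Note that $u_t^*=e^{-i t h}$ lies in that subalgebra, so all the relevant elements are available there.
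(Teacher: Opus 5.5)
Step~1 rests on an invalid inequality. In the commutative algebra $\mathcal M$, submultiplicativity of the spectral radius gives $r(u_t^*u_t)\le r(u_t^*)\,r(u_t)=r(u_t)^2$. That is the reverse of the inequality you use. The inequality $r(x)^2\le r(x^*x)$ is Pt\'ak's inequality, which holds in hermitian Banach $*$-algebras. Invoking it to prove that self-adjoint elements have real spectrum is therefore circular.

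It genuinely fails without extra input. Take the disc algebra with the isometric involution $f^*(z)=\overline{f(\bar z)}$. There $h(z)=z$ is self-adjoint and $u_t=e^{itz}$ satisfies $u_t^*u_t=1$, yet $r(u_t)=e^{|t|}$. Your earlier sentence asserting $r(x)^2\le r(x^*x)$ ``for every $x$'' is false for the same reason.

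The flaw is harmless because nothing downstream uses Step~1. In Step~2 the inclusion $\mathrm{sp}_{\mathcal A}(x)\subset[-\mu,\mu]$ plays no role. Invertibility of $x$ follows solely from $r_{\mathcal A}(y)\le\mu-\delta<\mu$, which is the hypothesis applied to the self-adjoint element $y$, together with the Neumann series for $1-y/\mu$. Delete Step~1 and the reference to it, and the proof is complete. Realness of $\mathrm{sp}_{\mathcal A}(h)$ for $h=h^*$ then comes out as a consequence of the lemma itself.

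So amended, your Steps~2--3 are essentially the paper's argument. The paper first treats self-adjoint $a$ through the positive element $b=(a-\bar\lambda)(a-\lambda)$ and the shift $\|\iota b\|1-b$. It then reduces a general $x$ to $x^*x$ and $xx^*$. You apply the same shift-and-Neumann mechanism directly to $x^*x$ and $xx^*$, which is slightly more economical since $\iota(x^*x)$ is already positive.
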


\begin{proof}
The inclusion
$\mathrm{sp}_{\mathcal{B}}(\iota a)\subseteq\mathrm{sp}_{\mathcal{A}}(a)$
holds for any unital homomorphism, so only the converse needs proof.

\emph{Self-adjoint case.} Let $a=a^*$ and
$\lambda\notin\mathrm{sp}_{\mathcal{B}}(\iota a)$. Since
$\mathrm{sp}_{\mathcal{B}}(\iota a)\subseteq\R$, also
$\bar\lambda\notin\mathrm{sp}_{\mathcal{B}}(\iota a)$. Put
$b:=(a-\bar\lambda)(a-\lambda)\in\mathcal{A}$; then $b=b^*$ and
$\iota b=(\iota a-\lambda)^*(\iota a-\lambda)$ is positive and
invertible in $\mathcal{B}$, so
$\mathrm{sp}_{\mathcal{B}}(\iota b)\subseteq[\delta,\|\iota b\|]$ for
some $\delta>0$. The element $c:=\|\iota b\|\,1-b$ is self-adjoint with
$\mathrm{sp}_{\mathcal{B}}(\iota c)\subseteq[0,\|\iota b\|-\delta]$, so
the hypothesis yields
$r_{\mathcal{A}}(c)=\|\iota c\|_{\mathcal{B}}\le\|\iota b\|-\delta$.
Every $w\in\mathrm{sp}_{\mathcal{A}}(b)
=\|\iota b\|-\mathrm{sp}_{\mathcal{A}}(c)$ therefore satisfies
$\big|\,\|\iota b\|-w\,\big|\le\|\iota b\|-\delta<\|\iota b\|$, whence
$0\notin\mathrm{sp}_{\mathcal{A}}(b)$ and $b$ is invertible in
$\mathcal{A}$. As $b$ is a polynomial in $a$, its inverse commutes with
$a$, and $b^{-1}(a-\bar\lambda)$ is then a two-sided inverse of
$a-\lambda$ in $\mathcal{A}$. Thus
$\lambda\notin\mathrm{sp}_{\mathcal{A}}(a)$.

\emph{General case.} If $\iota x$ is invertible in $\mathcal{B}$, so
are $\iota(x^*x)$ and $\iota(xx^*)$; by the self-adjoint case, $x^*x$
and $xx^*$ are invertible in $\mathcal{A}$, and $(x^*x)^{-1}x^*$,
$x^*(xx^*)^{-1}$ are a left and a right inverse of $x$, so $x$ is
invertible in $\mathcal{A}$. Applying this to $x-\lambda$ gives
$\mathrm{sp}_{\mathcal{A}}(x)\subseteq\mathrm{sp}_{\mathcal{B}}(\iota x)$.
\end{proof}

\begin{proposition}[Weighted spectral invariance over $\Z^N$]\label{prop:weightedSI}
Let $\mathfrak{B}$ be a unital C*-algebra topologically graded over
$\Z^N$ and let $\nu$ be a GRS weight. Then:
\begin{enumerate}
\item[(i)] $\ell^{1,\nu}(\mathfrak{B})$ is inverse-closed in
$\mathfrak{B}$; more precisely,
$\mathrm{sp}_{\ell^{1,\nu}(\mathfrak{B})}(\Phi)
=\mathrm{sp}_{\mathfrak{B}}(\Phi)$ for every
$\Phi\in\ell^{1,\nu}(\mathfrak{B})$.
\item[(ii)] For every faithful nondegenerate representation $\Pi$ of
$\mathfrak{B}$ on a Hilbert space $\mathcal{H}$, the algebra
$\Pi\big[\ell^{1,\nu}(\mathfrak{B})\big]$ is inverse-closed in
$B(\mathcal{H})$.
\item[(iii)] $\ell^{1,\nu}(\mathfrak{B})$ is a symmetric Banach
$*$-algebra, stable under the holomorphic functional calculus of
$\mathfrak{B}$.
\end{enumerate}
In particular all of this holds for
$\mathfrak{B}=\mathfrak{C}=C^*_r(\GL)$ with the grading of Theorem~\ref{thm:grading}
and any GRS weight. By Lemma~\ref{lem:densefaithful}, (ii) applies
whenever the chosen configuration has a dense orbit; for repetitive
$L$ it applies to every configuration, with the same decay class.
\end{proposition}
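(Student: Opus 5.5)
The approach is to verify the hypothesis of Hulanicki's Lemma~\ref{lem:hulanicki} for the inclusion $\iota:\ell^{1,\nu}(\mathfrak B)\to\mathfrak B$ of Lemma~\ref{lem:weightedbanach}. That inclusion is a unital injective $*$-homomorphism, and $\ell^{1,\nu}(\mathfrak B)$ is a unital Banach $*$-algebra with isometric involution. Concretely, I will show that $r_{\ell^{1,\nu}(\mathfrak B)}(\Phi)=\|\Phi\|_{\mathfrak B}$ for every self-adjoint $\Phi$. By Lemma~\ref{lem:radtransfer} the left side equals $r_{\ell^1(\mathfrak B)}(\Phi)$, so the weighted statement reduces to the unweighted equality $r_{\ell^1(\mathfrak B)}(\Phi)=\|\Phi\|_{\mathfrak B}$ for $\Phi=\Phi^*\in\ell^{1,\nu}(\mathfrak B)\subseteq\ell^1(\mathfrak B)$.

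For this unweighted step I will invoke the graded spectral-invariance theorem \cite[Thms.~2.4 and~2.8]{JM}. Since $\Z^N$ is abelian, hence amenable and rigidly symmetric, it gives that $\ell^1(\mathfrak B)$ is symmetric and inverse-closed in $\mathfrak B$, i.e.\ $\mathrm{sp}_{\ell^1}(\Phi)=\mathrm{sp}_{\mathfrak B}(\Phi)$. For the concrete $\mathfrak C$, Remark~\ref{rem:dualaction} supplies exactly the dual-action hypothesis of \cite[Thm.~2.8]{JM}. For an abstract topologically graded $\mathfrak B$, \cite[Thm.~2.4]{JM} is the relevant form. Equality of spectra gives equality of spectral radii, and $r_{\mathfrak B}(\Phi)=\|\Phi\|$ for self-adjoint elements of a C*-algebra.

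Hulanicki's lemma then yields $\mathrm{sp}_{\ell^{1,\nu}}(\Phi)=\mathrm{sp}_{\mathfrak B}(\Phi)$ for all $\Phi$, which is (i). For (ii), a faithful representation $\Pi$ is an isometric $*$-isomorphism of $\mathfrak B$ onto a C*-subalgebra of $B(\mathcal H)$. Nondegeneracy together with unitality makes $\Pi(1)=1$, and C*-subalgebras are inverse-closed in $B(\mathcal H)$, so spectra are preserved along $\ell^{1,\nu}\to\mathfrak B\to B(\mathcal H)$. For (iii), symmetry holds because $\mathrm{sp}_{\ell^{1,\nu}}(\Phi^*\Phi)=\mathrm{sp}_{\mathfrak B}(\Phi^*\Phi)\subseteq[0,\infty)$. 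Holomorphic stability follows from equality of spectra and a Cauchy integral: for $f$ holomorphic near $\mathrm{sp}(\Phi)$, the resolvents $(\lambda-\Phi)^{-1}$ lie in $\ell^{1,\nu}$ and depend continuously on $\lambda$ along the contour there, so the integral converges in $\ell^{1,\nu}$ and $\iota$ carries it to $f(\Phi)$ computed in $\mathfrak B$. The final sentence about $\mathfrak C$ combines Theorem~\ref{thm:grading} with Lemma~\ref{lem:densefaithful}. The regular representation $\pi_\omega$ is unital and hence nondegenerate, and it is faithful when the orbit of $\omega$ is dense.

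The main obstacle is the unweighted step: one must check that the hypotheses of \cite{JM} are literally met by an abstract topologically $\Z^N$-graded $\mathfrak B$, and not only by a dual action. If that citation proves awkward, I would fall back on proving $r_{\ell^1}(\Phi)\le\|\Phi\|$ directly for self-adjoint $\Phi$. The plan would be to use the conditional expectation to embed $\ell^1(\mathfrak B)$ into a twisted $\ell^1(\Z^N,\mathfrak B)$-type algebra, realised through the gauge action, and to apply symmetry of $\ell^1$ crossed products by abelian groups, but in the concrete $\mathfrak C$ case only.
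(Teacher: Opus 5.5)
Your proposal is correct and follows the paper's proof essentially step for step: Lemma~\ref{lem:radtransfer} together with the unweighted inverse-closedness of \cite[Thm.~2.4(ii)]{JM} gives $r_{\ell^{1,\nu}}(\Phi)=\|\Phi\|_{\mathfrak B}$ for self-adjoint $\Phi$. Hulanicki's lemma then yields (i), spectral permanence of C*-subalgebras yields (ii), and the Riesz--Dunford integral yields holomorphic stability. The one harmless difference is that you read symmetry directly from $\mathrm{sp}_{\ell^{1,\nu}}(\Phi^*\Phi)=\mathrm{sp}_{\mathfrak B}(\Phi^*\Phi)\subseteq[0,\infty)$, whereas the paper proves the algebra hermitian and invokes Shirali--Ford; your route is slightly more direct, and the fallback you sketch is not needed.
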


\begin{proof}
(i) For $\Phi=\Phi^*\in\ell^{1,\nu}(\mathfrak{B})$,
Lemma~\ref{lem:radtransfer} and the inverse-closedness of the
\emph{unweighted} algebra $\ell^{1}(\mathfrak{B})$ in $\mathfrak{B}$
\cite[Thm.~2.4(ii)]{JM} give
\[
r_{\ell^{1,\nu}}(\Phi)
=r_{\ell^{1}}(\Phi)
=r_{\mathfrak{B}}(\Phi)
=\|\Phi\|_{\mathfrak{B}},
\]
the last equality being the C*-identity for self-adjoint elements.
Lemma~\ref{lem:hulanicki}, applied to
$\iota:\ell^{1,\nu}(\mathfrak{B})\to\mathfrak{B}$, yields (i).

(ii) A faithful representation of a C*-algebra is isometric, and
$\Pi(\mathfrak{B})\subseteq B(\mathcal{H})$ is a C*-subalgebra
containing $I_{\mathcal{H}}=\Pi(1)$; by spectral permanence of
C*-subalgebras,
$\mathrm{sp}_{B(\mathcal{H})}(\Pi\Phi)=\mathrm{sp}_{\mathfrak{B}}(\Phi)$,
and (i) transfers inverses back into
$\Pi\big[\ell^{1,\nu}(\mathfrak{B})\big]$.

(iii) By (i), self-adjoint elements of $\ell^{1,\nu}(\mathfrak{B})$
have real spectra, so the algebra is hermitian and, by the
Shirali--Ford theorem \cite{SF}, symmetric. For the functional calculus,
if $f$ is holomorphic on a neighbourhood of
$\mathrm{sp}_{\mathfrak{B}}(\Phi)=\mathrm{sp}_{\ell^{1,\nu}}(\Phi)$,
the Riesz--Dunford integral
$\tfrac{1}{2\pi i}\oint f(\lambda)\,(\lambda-\Phi)^{-1}\,d\lambda$
converges in $\ell^{1,\nu}(\mathfrak{B})$. By (i), the resolvent is
$\ell^{1,\nu}$-valued and norm-continuous in $\lambda$. The map $\iota$
then sends the integral to $f(\iota\Phi)$ computed in $\mathfrak{B}$.
\end{proof}

\begin{corollary}[Matrix stability and K-theory]\label{cor:matrixK}
For every $n\in\mathbb{N}$, the C*-algebra $M_n(\mathfrak{B})$ is
topologically $\Z^N$-graded with components $M_n(\mathfrak{B}_g)$, and
$M_n\big(\ell^{1,\nu}(\mathfrak{B})\big)
=\ell^{1,\nu}\big(M_n(\mathfrak{B})\big)$ is inverse-closed in
$M_n(\mathfrak{B})$. Consequently $\ell^{1,\nu}(\mathfrak{B})$ is a
dense $*$-subalgebra of $\mathfrak{B}$, spectrally invariant and stable
under holomorphic functional calculus together with all its matrix
amplifications, and the inclusion induces isomorphisms
\[
K_*\big(\ell^{1,\nu}(\mathfrak{B})\big)
\;\xrightarrow{\ \cong\ }\;
K_*(\mathfrak{B})
\]
by the density theorem \cite[Thm.~1.3.1]{Bost}; see also the explicit formulation in \cite[Thm.~1.1]{Nica08}. In particular
$K_*\big(\ell^{1,\nu}(\mathfrak{C})\big)\cong K_*\big(C^*_r(\GL)\big)$:
the $K$-groups can be represented in the weighted algebra after
matrix stabilization: $K_0$-classes by differences of projection
classes and $K_1$-classes by invertible representatives. For a Fermi
projection obtained from a Hamiltonian in the weighted algebra,
membership follows directly from holomorphic functional calculus
across the gap. No new gap-label set is computed here.
\end{corollary}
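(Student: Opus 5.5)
The plan has three steps: establish the matrix grading, identify the weighted matrix algebra with the graded algebra of $M_n(\mathfrak B)$, and then invoke Proposition~\ref{prop:weightedSI} and the density theorem.

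\emph{Step 1: grading of $M_n(\mathfrak B)$.} I would take the gauge-type description of the grading and amplify it entrywise. Concretely, set $(M_n\mathfrak B)_g:=M_n(\mathfrak B_g)$ and use the coefficient maps $\widetilde P_g\otimes\mathrm{id}_{M_n}$, acting entrywise. The algebraic axioms are immediate: $M_n(\mathfrak B_g)M_n(\mathfrak B_h)\subseteq M_n(\mathfrak B_{g+h})$ because $\mathfrak B_g\mathfrak B_h\subseteq\mathfrak B_{g+h}$, and the $*$-compatibility follows from $\mathfrak B_g^*=\mathfrak B_{-g}$ together with transposition. Density of the algebraic direct sum is also entrywise.

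The real points are that the zero coefficient $E\otimes\mathrm{id}$ is a contractive, faithful conditional expectation, and that the $\widetilde P_g\otimes\mathrm{id}$ are bounded.
\begin{itemize}
\item For $E\otimes\mathrm{id}$: complete positivity of the conditional expectation $E$ (Tomiyama) gives a contractive conditional expectation. Faithfulness follows because the diagonal entries of $(E\otimes\mathrm{id})(b^*b)$ are $\sum_i E(b_{ij}^*b_{ij})$, exactly as in Case~(ii) of Theorem~\ref{thm:grading}.
\item For the $\widetilde P_g\otimes\mathrm{id}$: in the concrete case $\mathfrak C$ they are averages of the automorphisms $\alpha_z\otimes\mathrm{id}$, hence contractive. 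For an abstract topological grading, \cite{JM} gives the $\widetilde P_g$ as compositions of $E$ with multiplication, so they are completely bounded. In any case boundedness of each $\widetilde P_g\otimes \mathrm{id}$ is all that is needed.
\end{itemize}

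\emph{Step 2: the identification $M_n(\ell^{1,\nu}(\mathfrak B))=\ell^{1,\nu}(M_n(\mathfrak B))$.} This holds as sets with equivalent norms. The entry norms and the matrix norm satisfy
\[
\max_{ij}\|x_{ij}\|\le\|x\|_{M_n(\mathfrak B)}\le\sum_{ij}\|x_{ij}\|.
\]
Applying this to each homogeneous component $x_g$ gives the two-sided comparison
\[
\max_{ij}\|(x_{ij})\|_{1,\nu}\le\|x\|_{\ell^{1,\nu}(M_n)}\le\sum_{ij}\|x_{ij}\|_{1,\nu}.
\]
Inverse-closedness of $\ell^{1,\nu}(M_n(\mathfrak B))$ in $M_n(\mathfrak B)$ is then exactly Proposition~\ref{prop:weightedSI}(i), applied to the graded C*-algebra $M_n(\mathfrak B)$ with the same GRS weight. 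Stability under holomorphic functional calculus for every amplification is Proposition~\ref{prop:weightedSI}(iii), applied in the same way.

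\emph{Step 3: density and $K$-theory.} Density of $\ell^{1,\nu}(\mathfrak B)$ in $\mathfrak B$ is contained in Lemma~\ref{lem:weightedbanach}, since finite homogeneous sums lie in it. A dense, spectrally invariant subalgebra that is a Banach algebra under a finer norm, with the same property after all matrix amplifications, satisfies the hypotheses of the density theorem \cite[Thm.~1.3.1]{Bost} (or \cite[Thm.~1.1]{Nica08}). That theorem yields $K_*(\ell^{1,\nu}(\mathfrak B))\cong K_*(\mathfrak B)$ through the inclusion.

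The representative statements then follow from the standard construction of the inverse maps. For $K_0$, idempotents of the C*-algebra are perturbed, using holomorphic functional calculus, to projections lying in $M_k(\ell^{1,\nu})$. For $K_1$, invertibles are approximated by elements of the dense subalgebra, and these approximants remain invertible there by inverse-closedness. For the Fermi projection, it equals $\chi(H)$ with $\chi$ holomorphic near $\mathrm{sp}(H)$ across the gap, so it lies in $\ell^{1,\nu}$ by Proposition~\ref{prop:weightedSI}(iii).

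I expect the only delicate step to be the boundedness and faithfulness of the amplified coefficient maps in Step 1 for an abstract grading. I would handle it through complete positivity of $E$, and for $\mathfrak C$ through the gauge action.
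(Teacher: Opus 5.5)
Your proposal is correct and follows essentially the same route as the paper: you amplify the grading entrywise with $\widetilde P_0\otimes\mathrm{id}_{M_n}$ as the conditional expectation, identify $M_n(\ell^{1,\nu}(\mathfrak B))$ with $\ell^{1,\nu}(M_n(\mathfrak B))$ by entrywise norm estimates, apply Proposition~\ref{prop:weightedSI} to $M_n(\mathfrak B)$, and conclude with Bost's density theorem. One small precision: for the $K_0$ projection representatives, start from a projection and approximate it by a \emph{self-adjoint} element of the weighted matrix algebra before applying the Riesz calculus around $1$, as the paper does, because perturbing a non-self-adjoint idempotent this way only yields an idempotent.
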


\begin{proof}
The map $\widetilde P_0\otimes\mathrm{id}_{M_n}$ is a conditional expectation onto
$M_n(\mathfrak{B}_0)$ vanishing on $M_n(\mathfrak{B}_g)$ for $g\ne0$;
the algebraic sum of the $M_n(\mathfrak{B}_g)$ is dense in
$M_n(\mathfrak{B})$, and matrix products and adjoints respect the
grading entrywise. Hence $M_n(\mathfrak{B})$ is topologically
$\Z^N$-graded, and Proposition~\ref{prop:weightedSI} applies verbatim
to it: the input \cite[Thm.~2.4(ii)]{JM} holds for $M_n(\mathfrak{B})$
because $\Z^N$ is rigidly symmetric, exactly as in the proof of
Theorem~\ref{thm:main}(i). The identification
$M_n\big(\ell^{1,\nu}(\mathfrak{B})\big)
=\ell^{1,\nu}\big(M_n(\mathfrak{B})\big)$ holds with equivalence of
norms, by entrywise estimates with constants depending only on $n$.
The K-theory statement is the density theorem applied to the
continuous, dense, spectrally invariant (together with matrices)
inclusion $\iota$. Projection representatives may be obtained by
approximating an ambient projection by a self-adjoint element of the
weighted matrix algebra and applying the Riesz calculus on a small
contour around $1$. Sufficiently close projections determine the
same $K_0$-class. This explains the projection formulation, rather
than only a formulation in terms of Banach-algebra idempotents.
\end{proof}

\begin{remark}[Scope, envelope, and what is not claimed]\label{rem:noshell}
(i) \emph{Over $\Z^N$, GRS suffices.} Proposition~\ref{prop:weightedSI}
requires only the GRS condition. When the grading group is $\Z^N$, the
shell-comparability hypothesis \cite[(2.12)]{JM} in
\cite[Thm.~2.20]{JM} is not needed for symmetry or spectral invariance.
The uniform GRS condition in \cite[(2.11)]{JM} also holds automatically.
For general rigidly symmetric groups, both assumptions remain natural
because the commutative argument in the proof of
Lemma~\ref{lem:radtransfer} is unavailable. The weights considered here,
$(1+|g|)^s$ and $e^{\lambda|g|^{\alpha}}$ for $0<\alpha<1$, satisfy
GRS. Fixed exponential weights $e^{\lambda|g|}$ do not. The shift
counterexample in Remark~\ref{rem:weights} rules out a general
inverse-closedness statement for those weights, but it does not imply
failure for every individual graded algebra.
(ii) \emph{The envelope, a posteriori.} Since
$\ell^{1,\nu}(\mathfrak{B})$ is symmetric, its greatest C*-seminorm is
the Pt\'ak function $\rho(\Phi)=r(\Phi^*\Phi)^{1/2}$ \cite{Ptak}
, which by Lemma~\ref{lem:radtransfer} and
\cite[Thm.~2.4(ii)]{JM} equals $\|\iota\Phi\|_{\mathfrak{B}}$; hence
$C^*_{\mathrm{env}}\big(\ell^{1,\nu}(\mathfrak{B})\big)=\mathfrak{B}$,
which closes, a posteriori, the conceptual gap described above.
(iii) \emph{Norm control.} The proposition asserts membership of
inverses with no a priori bound on $\|\Phi^{-1}\|_{\ell^{1,\nu}}$ in
terms of $\|\Phi\|_{\ell^{1,\nu}}$ and
$\|\Phi^{-1}\|_{\mathfrak B}$. In the Delone setting, quantitative
bounds are available for polynomial weights by Theorem~\ref{thm:nci}
and for stretched-exponential weights by
Theorem~\ref{thm:stretchednci}. No norm-control assertion is made
here for arbitrary GRS weights.
(iv) \emph{Nonunital case.} Everything above is stated for unital
$\mathfrak{B}$ (in the present case, the transversal is compact, so $\mathfrak{C}$
is unital). If ever needed, the nonunital case follows by unitization:
$\widetilde{\mathfrak{B}}$ inherits the grading with
$(\widetilde{\mathfrak{B}})_0=\mathfrak{B}_0+\mathbb{C}1$, and the
arguments pass through unchanged.
\end{remark}

\begin{proof}[Proof of Theorem~\ref{thm:main}]
(i) Step 4 of the proof of Theorem~\ref{thm:grading} implements the
grading by a continuous gauge action of the compact abelian group
$\T^N=\widehat{\Z^N}$ on the unital $\Cst$-algebra $\mathfrak{C}$,
with spectral subspaces $\mathfrak{C}_g$. Part (i) therefore follows
directly from \cite[Thm.\ 2.8]{JM}; see also
Remark~\ref{rem:dualaction}. Alternatively, it follows from
\cite[Thm.\ 2.4]{JM}, since $\Z^N$ is abelian and hence rigidly
symmetric by Leptin--Poguntke. Part (ii) follows from
Proposition~\ref{prop:weightedSI}, applied to $\mathfrak{B}=\mathfrak{C}$
with the grading of Theorem~\ref{thm:grading}.

(iii) If $L$ is repetitive, minimality and
Lemma~\ref{lem:densefaithful} imply that every $\pi_\omega$
is faithful. Proposition~\ref{prop:weightedSI}(ii) then transfers
inverse-closedness to each regular representation.
\end{proof}

\begin{remark}[Weights: what is and is not reachable]\label{rem:weights}
Polynomial and stretched-exponential weights satisfy GRS, whereas
$e^{c|g|}$ with $c>0$ does not. Fixed exponential weights are not
inverse-closed in general: already for the bilateral shift $U$,
$\mathbf 1-tU$ with $e^{-c}<t<1$ has finite support but its inverse
$\sum_{m\ge0}t^mU^m$ is not summable with weight $e^{c|m|}$.

For general GRS weights the conclusion is preservation of the given
weighted class under inversion, products and holomorphic functional
calculus, with a hull-uniform profile. Positive-temperature
Fermi--Dirac functional calculus needs no spectral gap. A
zero-temperature Fermi projection requires a gap in the C*-spectrum
and retains the available class; a gap alone does not improve decay
to exponential. Exponential localization requires additional locality
hypotheses and corresponding resolvent estimates \cite{BBR,CO,OTC}.
Norm control is proved here for polynomial and stretched-exponential
weights, not for all GRS weights.
\end{remark}

\subsection{Graded ideals and defects: conditional outlook}

\begin{remark}[Defect ideals: conditional formulation]
\label{prop:fredholm}
Let $L'$ be FLC and suppose that
$F\subseteq\trans(L')$ is a nonempty closed invariant subset.
Put
\[
\mathcal G'=\mathcal G_{L'},\qquad
U=\trans(L')\setminus F,\qquad
\mathfrak J=C_r^*(\mathcal G'|_U).
\]
Amenability gives the exact sequence
\cite[Prop.~4.3.2 and its reduced-algebra discussion]{Sims18}
\[
0\longrightarrow\mathfrak J
\longrightarrow C_r^*(\mathcal G')
\longrightarrow C_r^*(\mathcal G'|_F)
\longrightarrow0.
\]
The ideal $\mathfrak J$ is graded: compactly supported sections
on $\mathcal G'|_U$ split into finitely many homogeneous
components, still supported on that reduction.
By \cite[Thm.~2.16]{JM}, the image of
$\ell^1(C_r^*(\mathcal G'))$ in the quotient is inverse-closed.

For a defect model, identifying $F$ with the transversal of
the background hull is an additional geometric step.
The closed invariant subset defines the quotient; its open
complement defines the ideal.

A Fredholm interpretation further requires a faithful
representation $\Pi$ with
$\Pi(\mathfrak J)=\mathcal K(\mathcal H)$; compare
\cite[Cor.~2.18]{JM}.
No such identification is assumed automatically.
\end{remark}

\section{The kernel dictionary}\label{sec:kernels}

This section translates the algebraic results into the language of covariant kernels used in the physics and numerical-analysis literature. It mirrors \cite[\S3.6]{FJM} and \cite[Def.\ 2.5, Cor.\ 2.6]{JM}.

\begin{definition}[Covariant convolution-dominated kernels]\label{def:cd}
A \emph{hull-covariant kernel} is a family $K=(K_\omega)_{\omega\in\trans}$
of functions $K_\omega:\omega\times\omega\to M_n(\Cx)$ such that:
\begin{enumerate}[label=\emph{(\alph*)},itemsep=2pt]
\item \emph{(covariance)}
$K_{\omega-t}(x-t,\,y-t)=K_\omega(x,y)$ for all $\omega\in\trans$,
$t\in\omega$ and $x,y\in\omega$;
\item \emph{(local dependence)} for every $x,y\in\R^d$ the function
$\omega\mapsto K_\omega(x,y)$ is continuous on the set
$\dom_x\cap\dom_y\subseteq\trans$, clopen by \eqref{eq:domclopen}.
\end{enumerate}
Given a weight $\nu$ on $\Z^N$ (Definition~\ref{def:weight}),
$K\in\mathcal{K}^\nu_{\mathrm{cov}}(L)$ means that, in addition, there is
a nonnegative function $k:\Z^N\to[0,\infty)$ in
$\ell^{1,\nu}(\Z^N)$ with
\[
\|K_\omega(x,y)\|\ \le\ k\big(a(x-y)\big)
\qquad\text{for all }\omega\in\trans,\ x,y\in\omega
\]
The inequality is meaningful because $x-y\in\Gamma_L$ by
Lemma~\ref{lem:hullcompat}. We define
$\|K\|_{\mathcal{K}^\nu}:=\inf\big\{\|k\|_{\ell^{1,\nu}}:\ k\text{ as
above}\big\}$.
\end{definition}

The definition is intrinsic: no groupoid enters. The bridge to $\GL$ is
the following lemma, which is where finite local complexity acts.

\begin{lemma}[Covariant kernels are functions on $\GL$]\label{lem:transport}
For a family $K=(K_\omega)_{\omega\in\trans}$ satisfying the covariance
\emph{(a)}, define
\[
\Phi_K:\GL\longrightarrow M_n(\Cx),\qquad
\Phi_K(\omega,x):=K_\omega(0,x).
\]
Then:
\begin{enumerate}[label=\emph{(\roman*)},itemsep=2pt]
\item the assignments $K\mapsto\Phi_K$ and $\Phi\mapsto K^\Phi$,
$K^\Phi_\omega(x,y):=\Phi(\omega-x,\,y-x)$, are mutually inverse
bijections between the families satisfying \emph{(a)} and the functions
$\GL\to M_n(\Cx)$;
\item under this bijection, $K$ satisfies the local dependence
\emph{(b)} if and only if $\Phi_K$ is continuous on $\GL$.
\end{enumerate}
\end{lemma}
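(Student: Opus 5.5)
The plan is to verify (i) by direct substitution, using covariance (a) in one direction and the groupoid structure in the other. For (ii), both directions reduce to the same observation: the topology of $\GL$ is the one induced from $\trans\times\R^d$, and its second coordinate is locally constant with values in the discrete set $L-L$ (Step~1 of Proposition~\ref{prop:basic}).

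\emph{Step 1: well-definedness.} If $(\omega,x)\in\GL$, then $0,x\in\omega$, so $K_\omega(0,x)$ is defined. Conversely, for $x,y\in\omega$ one has $0\in\omega-x$ and $y-x\in\omega-x$. Hence $(\omega-x,y-x)\in\GL$ and $K^\Phi_\omega(x,y)$ is defined.

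\emph{Step 2: inverse bijections.} First, $\Phi_{K^\Phi}(\omega,x)=K^\Phi_\omega(0,x)=\Phi(\omega,x)$. Second, $K^{\Phi_K}_\omega(x,y)=\Phi_K(\omega-x,y-x)=K_{\omega-x}(0,y-x)$, and covariance (a) with $t=x\in\omega$ identifies this with $K_\omega(x,y)$. It remains to check that $K^\Phi$ satisfies (a) for arbitrary $\Phi$. For $t\in\omega$ and $x,y\in\omega$ we get $K^\Phi_{\omega-t}(x-t,y-t)=\Phi((\omega-t)-(x-t),(y-t)-(x-t))=\Phi(\omega-x,y-x)$, which is $K^\Phi_\omega(x,y)$. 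This settles (i).

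\emph{Step 3: (b) implies continuity.} Assume (b) holds. On a basic set $U_{P,x}=\trans_\rho(P)\times\{x\}$ we have $\Phi_K(\omega,x)=K_\omega(0,x)$, viewed as a function of $\omega\in\trans_\rho(P)\subseteq\dom_0\cap\dom_x=\dom_x$. This function is continuous by (b). The sets $U_{P,x}$ are open, form a basis, and $r$ restricts to a homeomorphism of $U_{P,x}$ onto $\trans_\rho(P)$. So $\Phi_K$ is continuous on each set of an open cover of $\GL$, hence continuous.

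\emph{Step 4: continuity implies (b).} Assume $\Phi_K$ is continuous and fix $x,y\in\R^d$. On $\dom_x\cap\dom_y$ we have $K_\omega(x,y)=\Phi_K(\omega-x,y-x)$. The translation $\omega\mapsto\omega-x$ is a homeomorphism of $\dom_x$ onto $\dom_{-x}$ (Step~2 of Proposition~\ref{prop:basic}). Moreover $\eta\mapsto(\eta,y-x)$ is continuous from $\dom_{y-x}$ into $\GL$, since the topology of $\GL$ is induced from $\trans\times\R^d$. Composing these continuous maps shows that $\omega\mapsto K_\omega(x,y)$ is continuous.

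\emph{Main obstacle.} The only delicate point is Step~4. One must confirm that translation preserves the clopen domains, namely $\omega\in\dom_x\cap\dom_y$ implies $\omega-x\in\dom_{y-x}\cap\trans$, and that it is continuous there. This is exactly what Step~2 of Proposition~\ref{prop:basic} and \ref{H1} provide.
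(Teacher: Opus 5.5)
Your proof is correct and follows the paper's argument essentially step for step. Part (i) is the same direct substitution, using covariance with $t=x$. In (ii) the paper organizes the same reasoning through the clopen degree partition $c^{-1}(g)$ and the inverse $\eta\mapsto(\eta,x_g)$ of $r|_{c^{-1}(g)}$, whereas you use the basic cylinders $U_{P,x}$ and the map $\eta\mapsto(\eta,y-x)$ directly via the topology induced from $\trans\times\R^d$; these are the same maps, so nothing is missing.
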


\begin{proof}
(i) All expressions are defined: $(\omega,x)\in\GL$ gives $0,x\in\omega$,
while $x\in\omega$ gives $\omega-x\in\trans$ and $0,\,y-x\in\omega-x$.
Any $K^\Phi$ satisfies (a):
\[
K^\Phi_{\omega-t}(x-t,\,y-t)
=\Phi\big((\omega-t)-(x-t),\,(y-t)-(x-t)\big)
=\Phi(\omega-x,\,y-x)=K^\Phi_\omega(x,y).
\]
Moreover $\Phi_{K^\Phi}(\omega,x)=K^\Phi_\omega(0,x)=\Phi(\omega,x)$;
and if $K$ satisfies (a), then, taking $t=x$ there,
\[
K^{\Phi_K}_\omega(x,y)=\Phi_K(\omega-x,\,y-x)
=K_{\omega-x}(0,\,y-x)=K_{\omega-x}(x-x,\,y-x)=K_\omega(x,y).
\]

(ii) Suppose (b). The sets $c^{-1}(g)$, $g\in a(L-L)$, form a partition
of $\GL$ into clopen sets, on each of which the second coordinate is
the constant $x_g$ and $r$ restricts to a homeomorphism onto $\dom_g$
(Proposition~\ref{prop:cocycle}). On $c^{-1}(g)$,
\[
\Phi_K=\big[\omega\mapsto K_\omega(0,x_g)\big]\circ r|_{c^{-1}(g)},
\]
continuous by (b) applied to the pair $(0,x_g)$, whose domain is
$\dom_0\cap\dom_{x_g}=\trans\cap\dom_g=\dom_g$. Being continuous on
every member of a clopen partition, $\Phi_K$ is continuous on $\GL$.

Conversely, suppose $\Phi_K$ continuous and fix $x,y\in\R^d$ with
$\dom_x\cap\dom_y\neq\varnothing$ (for other pairs (b) is vacuous).
Then $x,y\in\omega$ for some $\omega\in\trans$, so
$y-x\in\omega-\omega\subseteq\Gamma_L$ (Lemma~\ref{lem:hullcompat})
and $g:=a(y-x)$ is defined, with $x_g=y-x$. By (i), on
$\dom_x\cap\dom_y$,
\[
K_\omega(x,y)=\Phi_K(\omega-x,\,y-x)
=\big(\Phi_K\circ\iota_g\circ T_x\big)(\omega),
\]
where $T_x:\omega\mapsto\omega-x$ is the translation homeomorphism
$\dom_x\to\dom_{-x}$ (Step~2 of the proof of
Proposition~\ref{prop:basic}) and
$\iota_g:\eta\mapsto(\eta,\,x_g)$ is the inverse of
$r|_{c^{-1}(g)}$, continuous by Proposition~\ref{prop:cocycle}(iii).
The composition is legitimate: if $\omega\in\dom_x\cap\dom_y$ then
$y-x\in\omega-x$, i.e.\ $T_x(\omega)\in\dom_{x_g}=\dom_g$. Hence
$\omega\mapsto K_\omega(x,y)$ is continuous on $\dom_x\cap\dom_y$,
which is (b).
\end{proof}

\begin{remark}[Local dependence; pattern-equivariant kernels]\label{rem:localdep}
Condition \emph{(b)} says that each matrix element $K_\omega(x,y)$
depends continuously on the configuration $\omega$ in the local
matching topology: it is determined to any prescribed accuracy by a
sufficiently large patch of $\omega$.
If, for each fixed pair $(x,y)$, the matrix element depends
exactly on a sufficiently large finite patch, then the
corresponding coefficient function is locally constant, as in
strongly pattern-equivariant constructions \cite{KellendonkPutnam00}.
This finite-patch dependence must be distinguished from
finite spatial range, which means that there exists $R_0$
such that
\[
K_\omega(x,y)=0
\qquad\text{whenever }|x-y|>R_0,
\]
uniformly over the hull.
Neither condition is part of the definition of continuous
local dependence.
Tight-binding Hamiltonians and overlap matrices whose entries are
computed from the local atomic environment satisfy \emph{(b)} by
construction. By covariance, imposing \emph{(b)} only for the pairs
$(0,x)$, $x\in\R^d$, already yields the general case, as used in the
proof of Lemma~\ref{lem:transport}(ii); the symmetric formulation is
retained.
\end{remark}

\begin{proposition}[Bisection lemma and the dictionary]\label{prop:dictionary}
Let $L$ be FLC and $\nu$ a weight (Definition~\ref{def:weight});
we use the notation of Remark~\ref{rem:symbols}, where
$\Phi(\gamma)$ abbreviates $j(\Phi)(\gamma)$. In case \emph{(ii)},
all statements are understood at the level of $M_n(\mathfrak C)$.
\begin{enumerate}[label=\emph{(\roman*)},itemsep=2pt]
\item For every $\Phi\in\mathfrak{C}$ and $g\in\Z^N$,
\[
\big\|\widetilde P_g(\Phi)\big\|_{\mathfrak{C}}
\ =\ \sup_{\omega\in\dom_g}\big\|\Phi(\omega,x_g)\big\|,
\]
the supremum being attained on the compact set $\dom_g$
(with $\sup\varnothing:=0$).
\item The map $\Phi\mapsto K^{\Phi}$,
\[
K^{\Phi}_\omega(x,y)\ :=\ \Phi(\omega-x,\,y-x)
\qquad(\omega\in\trans,\ x,y\in\omega),
\]
is an isometric bijection of $\ell^{1,\nu}(\mathfrak{C})$ onto
$\mathcal{K}^{\nu}_{\mathrm{cov}}(L)$ intertwining the product and
involution of $\ell^{1,\nu}(\mathfrak{C})$ with the kernel operations
\begin{align*}
(K\!\cdot\!K')_\omega(x,y)
&=\sum_{z\in\omega}K_\omega(x,z)\,K'_\omega(z,y)
\quad\text{(absolutely convergent)},\\
K^{*}_\omega(x,y)&:=K_\omega(y,x)^{*}.
\end{align*}
Consequently $\mathcal{K}^{\nu}_{\mathrm{cov}}(L)$, with these
operations, is a unital Banach $*$-algebra whose unit is the Kronecker
kernel $\delta_{x,y}\mathbf 1$. The map $\Phi\mapsto K^\Phi$ is an
isometric $*$-isomorphism. The infimum defining
$\|K^{\Phi}\|_{\mathcal K^\nu}$ is attained, at
$k_0(g)=\|\widetilde P_{-g}(\Phi)\|_{\mathfrak{C}}$.
\item For every $\omega\in\trans$, $K^{\Phi}_\omega$ is the integral
kernel of $\pi_\omega(\Phi)$ on $\ell^2(\omega)\otimes\Cx^n$; in
particular every $K\in\mathcal{K}^{\nu}_{\mathrm{cov}}(L)$ acts on each
$\ell^2(\omega)\otimes\Cx^n$ as a bounded operator, of norm at most
$\|K\|_{\mathcal K^\nu}$.
\end{enumerate}
\end{proposition}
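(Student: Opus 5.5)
Part (i) comes first, because everything else follows from it. By Step~2 of Theorem~\ref{thm:grading}, the component $\mathfrak C_g=C_\co(c^{-1}(g))$ is isometric to $C(\dom_g)$ with the supremum norm. So $\|\widetilde P_g(\Phi)\|_{\mathfrak C}=\sup_{\gamma\in c^{-1}(g)}|j(\widetilde P_g\Phi)(\gamma)|$. By Remark~\ref{rem:symbols}(b), $j(\widetilde P_g\Phi)=j(\Phi)\mathbf 1_{c^{-1}(g)}$. Proposition~\ref{prop:cocycle}(iii) parametrizes $c^{-1}(g)$ by $\omega\mapsto(\omega,x_g)$, $\omega\in\dom_g$, and this set is compact. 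The supremum is therefore attained, and (i) follows. In case (ii) the same argument is run with $M_n$-valued sections and operator norms.

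For (ii), I first set up the dictionary on coefficients. By Remark~\ref{rem:symbols}(a),(b), $K^\Phi_\omega(x,y)=\Phi(\omega-x,y-x)$, and this arrow has degree $a(y-x)$. Hence
\[
\|K^\Phi_\omega(x,y)\|\le\|\widetilde P_{a(y-x)}\Phi\|=k_0(a(x-y)),\qquad k_0(g):=\|\widetilde P_{-g}\Phi\|.
\]
Symmetry of $\nu$ gives $\|k_0\|_{\ell^{1,\nu}}=\|\Phi\|_{1,\nu}$, so $K^\Phi\in\mathcal K^\nu_{\mathrm{cov}}$ with $\|K^\Phi\|\le\|\Phi\|_{1,\nu}$. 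Covariance (a) and local dependence (b) hold by Lemma~\ref{lem:transport}, since $j(\Phi)$ is continuous.

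For the reverse inequality, let $k$ be any dominating profile. Every arrow $(\eta,x_g)$ with $\eta\in\dom_g$ arises as $K^\Phi_\eta(0,x_g)$. Hence $\|\widetilde P_g\Phi\|=\sup_\eta\|K^\Phi_\eta(0,x_g)\|\le k(-g)$, so $\|\Phi\|_{1,\nu}\le\|k\|_{\ell^{1,\nu}}$. This gives isometry and shows that the infimum is attained at $k_0$. Injectivity follows from injectivity of $j$ together with the identification in Lemma~\ref{lem:weightedbanach}.

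\emph{Surjectivity is the step I expect to need the most care.} Given $K\in\mathcal K^\nu_{\mathrm{cov}}$, Lemma~\ref{lem:transport} produces a continuous $\Phi_K$ on $\GL$. Its restriction $\Phi_g:=\Phi_K\mathbf 1_{c^{-1}(g)}$ lies in $C_\co(c^{-1}(g))=\mathfrak C_g$, because $c^{-1}(g)$ is compact open. Its norm satisfies $\|\Phi_g\|=\sup|\Phi_K|\le k(-g)$. The series $\Psi:=\sum_g\Phi_g$ therefore converges absolutely in $\ell^{1,\nu}(\mathfrak C)$. Applying $j$, which is continuous, termwise gives $j(\Psi)=\Phi_K$ pointwise, since the supports are disjoint. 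Hence $K^\Psi=K$.

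The algebraic structure comes next. For the product I use Remark~\ref{rem:symbols}(b): $K^\Phi_\omega$ is the matrix of $\pi_\omega(\Phi)$. Matrices of products of bounded operators multiply, with absolute convergence of the entry sums because the rows are $\ell^2$ (indeed $\ell^1$, by domination). It follows that $K^{\Phi\Psi}=K^\Phi\cdot K^\Psi$. The adjoint matrix gives $K^{\Phi^*}=(K^\Phi)^*$. Since the map is an isometric $*$-bijection from the Banach $*$-algebra $\ell^{1,\nu}(\mathfrak C)$ (Lemma~\ref{lem:weightedbanach}), $\mathcal K^\nu_{\mathrm{cov}}$ is a unital Banach $*$-algebra. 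Its unit is $K^{\mathbf 1_\trans}=\delta_{x,y}\mathbf 1$.

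For (iii), the kernel identification is exactly Remark~\ref{rem:symbols}(b). The norm bound follows from $\|\pi_\omega(\Phi)\|\le\|\Phi\|_{\mathfrak C}\le\sum_g\|\widetilde P_g\Phi\|\le\|\Phi\|_{1,\nu}$, using $\nu\ge1$. Alternatively, Schur's test with the profile $k_0$ gives the same bound.
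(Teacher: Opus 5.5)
Your proposal is correct and takes essentially the same route as the paper. Part (i) comes from the sup-norm identification of $\mathfrak C_g$ together with $j(\widetilde P_g\Phi)=j(\Phi)\mathbf 1_{c^{-1}(g)}$; the isometry from domination by $k_0$ plus the reverse bound through $K^\Phi_\eta(0,x_g)=\Phi(\eta,x_g)$; surjectivity from summing the homogeneous pieces $\Phi_K\mathbf 1_{c^{-1}(g)}$ and applying the contractive map $j$; and the kernel operations from the matrices of $\pi_\omega$. The differences are only in the order of the steps and in minor justifications, e.g.\ you take continuity of $j(\Phi)$ directly from $j$ being $C_b$-valued, whereas the paper derives it from the absolutely convergent homogeneous expansion.
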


\begin{proof}
\textbf{Step 1: (i).} $\widetilde P_g(\Phi)$ lies in
$\mathfrak C_g=C_\co(c^{-1}(g))$, on which the reduced norm is the sup
norm (Step~2 of the proof of Theorem~\ref{thm:grading}; in case (ii),
$M_n(\mathfrak C_g)\cong C(\dom_g,M_n(\Cx))$ with the supremum of
matrix norms). Since $j$ restricts to the identity on $C_\co(\GL)$ and
$j(\widetilde P_g\Phi)=j(\Phi)\,\mathbf 1_{c^{-1}(g)}$ (Step~5 there),
for $\omega\in\dom_g$ one has
$(\widetilde P_g\Phi)(\omega,x_g)=\Phi(\omega,x_g)$; as $r$ identifies
$c^{-1}(g)$ with $\dom_g$ (Proposition~\ref{prop:cocycle}(iii)),
\[
\|\widetilde P_g(\Phi)\|_{\mathfrak C}
=\sup_{\gamma\in c^{-1}(g)}\big\|(\widetilde P_g\Phi)(\gamma)\big\|
=\sup_{\omega\in\dom_g}\big\|\Phi(\omega,x_g)\big\|,
\]
attained by compactness of $\dom_g$ and continuity.

\textbf{Step 2: kernels of $\pi_\omega$, operations, injectivity.}
By Remark~\ref{rem:symbols}(a)--(b), for $\Phi\in\mathfrak C$ and
$x,y\in\omega$,
\[
\big\langle \pi_\omega(\Phi)\,\delta_{\gamma_y},\delta_{\gamma_x}\big\rangle
= j(\Phi)\big(\gamma_x\gamma_y^{-1}\big)
= \Phi(\omega-x,\,y-x)
= K^{\Phi}_\omega(x,y):
\]
$K^\Phi_\omega$ is the matrix of $\pi_\omega(\Phi)$ in the basis
$(\delta_x)_{x\in\omega}$. This is (iii), with
$\|\pi_\omega(\Phi)\|\le\|\Phi\|_{\mathfrak C}\le\|\Phi\|_{1,\nu}$ for
$\Phi\in\ell^{1,\nu}(\mathfrak C)$, since $\nu\ge1$. The matrix of a
product of bounded operators is the matrix product of their matrices,
the sum $\sum_{z\in\omega}$ converging unconditionally because
$\sum_{z}P_z=\mathbf 1$ strongly ($P_z$ the coordinate projections);
as $\pi_\omega$ is a $*$-homomorphism, this gives, for
$\Phi,\Psi\in\mathfrak C$,
\[
K^{\Phi\Psi}=K^{\Phi}\!\cdot K^{\Psi},
\qquad
K^{\Phi^{*}}_{\omega}(x,y)=\big(K^{\Phi}_\omega(y,x)\big)^{*}.
\]
For dominated kernels the product sum even converges absolutely: if
$k,k'$ dominate $K,K'\in\mathcal K^\nu_{\mathrm{cov}}(L)$ then,
$z\mapsto a(z-y)$ being injective on $\omega$
(Lemma~\ref{lem:hullcompat} and Proposition~\ref{prop:lagarias}),
\[
\sum_{z\in\omega}\|K_\omega(x,z)\|\,\|K'_\omega(z,y)\|
\ \le\ \sum_{m\in\Z^N}k\big(a(x-y)-m\big)\,k'(m)
\ =\ (k\ast k')\big(a(x-y)\big)\ <\ \infty .
\]
Finally, $\Phi\mapsto K^\Phi$ is injective on $\mathfrak C$: every
arrow $(\eta,z)\in\GL$ equals $(\omega-x,\,y-x)$ for $\omega=\eta$,
$x=0$, $y=z$, so $K^\Phi=0$ forces $j(\Phi)=0$, and $j$ is injective
(Step~5).

\textbf{Step 3: $K^\Phi\in\mathcal K^\nu_{\mathrm{cov}}(L)$, and the
isometry.} Let $\Phi\in\ell^{1,\nu}(\mathfrak C)$. The series
$\Psi:=\sum_g\widetilde P_g(\Phi)$ converges absolutely in
$\mathfrak C$, and $\widetilde P_h(\Phi-\Psi)=0$ for every $h$
(Step~4 of the proof of Theorem~\ref{thm:grading}); hence
$j(\Phi-\Psi)\,\mathbf 1_{c^{-1}(h)}=j\big(\widetilde P_h(\Phi-\Psi)\big)=0$
for every $h$, so $j(\Phi-\Psi)=0$ and $\Phi=\Psi$ (Step~5). Since $j$
is linear and contractive,
$j(\Phi)=\sum_g j(\widetilde P_g\Phi)$ uniformly on $\GL$; each
summand is continuous, so $j(\Phi)$ is continuous. By
Lemma~\ref{lem:transport}(i), $K^\Phi$ satisfies the covariance (a)
and $\Phi_{K^\Phi}=j(\Phi)$; by Lemma~\ref{lem:transport}(ii), $K^\Phi$
satisfies the local dependence (b). Domination: for $x,y\in\omega$ the
arrow $(\omega-x,\,y-x)$ has degree $a(y-x)=-a(x-y)$
(Remark~\ref{rem:symbols}(a)), so (i) gives
\[
\|K^{\Phi}_\omega(x,y)\|\ \le\ \big\|\widetilde P_{a(y-x)}(\Phi)\big\|
\ =\ k_0\big(a(x-y)\big),
\qquad k_0(g):=\|\widetilde P_{-g}(\Phi)\|,
\]
and $\|k_0\|_{\ell^{1,\nu}}
=\sum_g\nu(g)\,\|\widetilde P_{-g}(\Phi)\|=\|\Phi\|_{1,\nu}$ by
symmetry of $\nu$ (Definition~\ref{def:weight}). Hence
$K^\Phi\in\mathcal K^\nu_{\mathrm{cov}}(L)$ with
$\|K^\Phi\|_{\mathcal K^\nu}\le\|\Phi\|_{1,\nu}$. Conversely, if $k$ is
any admissible dominator of $K^\Phi$, then for every $h$, by (i) and
$\Phi(\eta,x_h)=K^\Phi_\eta(0,x_h)$,
\[
\|\widetilde P_h(\Phi)\|
=\sup_{\eta\in\dom_h}\|K^{\Phi}_\eta(0,x_h)\|
\ \le\ k\big(a(-x_h)\big)=k(-h),
\]
so $\|k\|_{\ell^{1,\nu}}=\sum_g\nu(g)\,k(g)
\ge\sum_g\nu(g)\,\|\widetilde P_{-g}(\Phi)\|=\|\Phi\|_{1,\nu}$. Thus
$\|K^\Phi\|_{\mathcal K^\nu}=\|\Phi\|_{1,\nu}$, the infimum being
attained at $k_0$.

\textbf{Step 4: surjectivity and conclusion.} Let
$K\in\mathcal K^\nu_{\mathrm{cov}}(L)$ with admissible dominator $k$.
By (b) and Lemma~\ref{lem:transport}(ii), $\Phi_K$ is continuous on
$\GL$; put $f_g:=\Phi_K\,\mathbf 1_{c^{-1}(g)}$, a continuous function
supported on the compact open set $c^{-1}(g)$, i.e.\
$f_g\in\mathfrak C_g$, with
\[
\|f_g\|_{\mathfrak C}
=\sup_{\omega\in\dom_g}\|K_\omega(0,x_g)\|
\ \le\ k\big(a(-x_g)\big)=k(-g)
\]
(Step~2 of the proof of Theorem~\ref{thm:grading} for the equality).
Then $\Phi:=\sum_g f_g$ converges absolutely in $\mathfrak C$
($\sum_g\|f_g\|\le\|k\|_{\ell^1}<\infty$),
$\widetilde P_h(\Phi)=f_h$ for every $h$ (continuity of
$\widetilde P_h$ and Step~4 there), and
\[
\sum_h\nu(h)\,\|\widetilde P_h(\Phi)\|
\ \le\ \sum_h\nu(h)\,k(-h)=\|k\|_{\ell^{1,\nu}}<\infty
\]
by symmetry of $\nu$; so $\Phi\in\ell^{1,\nu}(\mathfrak C)$. Moreover
$j(\Phi)=\sum_g f_g=\Phi_K$ (uniform convergence; $j$ is the identity
on $C_\co$), whence $K^{\Phi}=K^{\Phi_K}=K$ by
Lemma~\ref{lem:transport}(i). The map is onto; by Step~2 the kernel
product and involution of members of
$\mathcal K^\nu_{\mathrm{cov}}(L)$ are again members and correspond to
the operations of $\ell^{1,\nu}(\mathfrak C)$, so the Banach
$*$-algebra axioms transport. Unitality: $\mathbf 1=u_0$, and
$(\omega-x,\,y-x)$ is a unit iff $x=y$, so
$K^{\mathbf 1}_\omega(x,y)=\delta_{x,y}\mathbf 1$, of norm
$\nu(0)=1$.
\end{proof}

\begin{corollary}[The kernel classes are Euclidean]\label{cor:weightclasses}
\emph{(i) Doubling weights.} Let $w:[0,\infty)\to[1,\infty)$ be
nondecreasing with $w(2t)\le D\,w(t)$. Assume also that $w(0)=1$
and $w(u+v)\le w(u)w(v)$ for $u,v\ge0$, and set
$\nu_w(g):=w(|g|)$ on $\Z^N$. Let $\mathcal E^{w}_{\mathrm{cov}}(L)$ be the class obtained
from Definition~\ref{def:cd} by replacing the dominating condition with
\[
\|K_\omega(x,y)\|\ \le\ h(x-y),\qquad
\|h\|_{w}:=\sum_{z\in L-L}h(z)\,w(|z|)<\infty ,
\]
for some $h:L-L\to[0,\infty)$,
normed by $\|K\|_{\mathcal E^{w}}:=\inf\{\|h\|_{w}\}$.
Then
\[
\mathcal K^{\nu_w}_{\mathrm{cov}}(L)\ =\ \mathcal E^{w}_{\mathrm{cov}}(L)
\]
with equivalent norms, the constants depending only on $D$ and $c_\pm$.
In particular, for $w(t)=(1+t)^s$, this is exactly the class
of hull-covariant kernels admitting a common Euclidean
dominating profile with finite weighted $\ell^1$-moment
of order $s$.
This is stronger than a pointwise bound of order $s$ alone.

\emph{(ii) Sub-exponential scale.} For $0<\alpha<1$, $\lambda>0$, put
$\nu_\lambda(g):=e^{\lambda|g|^\alpha}$ and let
$\mathcal E^{\alpha,\lambda}_{\mathrm{cov}}(L)$ be defined as above with
$w(t)=e^{\lambda t^\alpha}$. Then
\[
\mathcal E^{\alpha,\lambda}_{\mathrm{cov}}
\ \subseteq\ \mathcal K^{\nu_{\lambda/c_+^{\alpha}}}_{\mathrm{cov}}
\ \subseteq\ \mathcal E^{\alpha,\;\lambda\,(c_-/c_+)^{\alpha}}_{\mathrm{cov}} ,
\]
so the decay exponent $\alpha$ is preserved and only the rate moves,
within the window $\big[(c_-/c_+)^{\alpha}\lambda,\ \lambda\big]$. In
particular
$\bigcup_{\lambda>0}\mathcal K^{\nu_\lambda}_{\mathrm{cov}}
 =\bigcup_{\lambda>0}\mathcal E^{\alpha,\lambda}_{\mathrm{cov}}$
(and likewise for intersections): the sub-exponential decay classes of
exponent $\alpha$ over the hull do not depend on the choice of metric.
The admissibility hypotheses of Theorem~\ref{thm:main}(ii) are
conditions on the weight over $\Z^N$ and are unaffected by this
translation.
\end{corollary}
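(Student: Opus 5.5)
The plan is to compare the two dominating conditions directly through the address map, using the bi-Lipschitz estimate \eqref{eq:bilip} of Lemma~\ref{lem:addcomp} on return vectors. Both classes share conditions (a) and (b) of Definition~\ref{def:cd}; they differ only in the dominating profile. By Lemma~\ref{lem:hullcompat} every difference $x-y$ with $x,y\in\omega\in\trans$ lies in $L-L$. Moreover $a$ is injective, so profiles $h$ on $L-L$ and profiles $k$ on $a(L-L)\subseteq\Z^N$ correspond bijectively via $h=k\circ a$; off $a(L-L)$ we set $k=0$. Only values on $a(L-L)$ ever enter the domination inequality. Hence it suffices to compare the weighted sums $\sum_{z\in L-L}h(z)w(|z|)$ and $\sum_{z\in L-L}h(z)\,\nu(a(z))$ for the same $h$.

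For (i), monotonicity of $w$ together with \eqref{eq:bilip} gives
\[
w(c_-|z|)\le w(|a(z)|)\le w(c_+|z|).
\]
The doubling condition iterated $\lceil\log_2 c_+\rceil$ times (if $c_+>1$) yields $w(c_+t)\le D^{\lceil\log_2 c_+\rceil^+}w(t)$. Symmetrically, $w(t)=w(c_-^{-1}\cdot c_-t)\le D^{\lceil\log_2 c_-^{-1}\rceil^+}w(c_-t)$. So $\nu_w(a(z))$ and $w(|z|)$ are comparable uniformly on $L-L$, with constants depending only on $D,c_\pm$.

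Before concluding I would check that $\nu_w$ is a weight in the sense of Definition~\ref{def:weight}. Symmetry is clear. Normalization is $w(0)=1$. Submultiplicativity follows from monotonicity and the triangle inequality: $w(|g+h|)\le w(|g|+|h|)\le w(|g|)w(|h|)$. Then taking infima over $h$ gives equality of the sets with norm equivalence. The polynomial case $w(t)=(1+t)^s$ satisfies all hypotheses with $D=2^s$. The remark that this is stronger than a pointwise bound is just the observation that a summable profile must be supplied.

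For (ii) the doubling argument fails, since $e^{\lambda(ct)^\alpha}/e^{\lambda t^\alpha}$ is unbounded. Instead I track the rate through the exponent. From $|a(z)|\le c_+|z|$,
\[
\nu_{\lambda/c_+^\alpha}(a(z))=e^{(\lambda/c_+^\alpha)|a(z)|^\alpha}\le e^{\lambda|z|^\alpha}.
\]
This gives the first inclusion with norm inequality $\le1$. From $|a(z)|\ge c_-|z|$,
\[
e^{\lambda(c_-/c_+)^\alpha|z|^\alpha}\le e^{(\lambda/c_+^\alpha)|a(z)|^\alpha},
\]
which gives the second inclusion. The weight axioms for $\nu_\lambda$ follow from subadditivity of $t\mapsto t^\alpha$ for $0<\alpha<1$. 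The union and intersection statements follow by letting $\lambda$ range over $(0,\infty)$, since each inclusion rescales $\lambda$ by a fixed positive factor. The final sentence is immediate: GRS is a property of $\nu$ on $\Z^N$ alone.

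The main obstacle is bookkeeping rather than depth. The key points are, first, that the bi-Lipschitz bound is used only on $L-L$, and not on all of $\Gamma_L$, where it fails by Remark~\ref{rem:flcsharp}(ii). Second, the identification of profiles must go through injectivity of $a$, so that no two return vectors share a degree.
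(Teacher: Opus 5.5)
Your proposal is correct and follows essentially the same route as the paper: profiles are exchanged exactly through the injective address map (with $k=0$ off $a(L-L)$), \eqref{eq:bilip} is used only on $L-L$, part (i) uses iterated doubling to get the constants $D_\pm$, and part (ii) rescales the rate with constant $1$ in both inclusions. Your explicit check that $\nu_w$ and $\nu_\lambda$ satisfy the weight axioms is a small addition that the paper leaves implicit.
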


\begin{proof}
Conditions (a) and (b) of Definition~\ref{def:cd} are common to both
classes and are untouched throughout: only the dominating profiles are
exchanged. Recall that $x-y\in\omega-\omega\subseteq L-L$ for all
$\omega\in\trans$ and $x,y\in\omega$ (Lemma~\ref{lem:hullcompat}), so
Euclidean profiles need only be defined on $L-L$; and $a$ restricts to
a bijection of $L-L$ onto $a(L-L)\subseteq\Z^N$
(Proposition~\ref{prop:lagarias}).

\emph{Exchange of profiles.} Given $h$, set $k:=h\circ a^{-1}$ on
$a(L-L)$ and $k:=0$ elsewhere; given $k$, set $h:=k\circ a$ on $L-L$.
In both directions domination is preserved \emph{exactly}: for
$x,y\in\omega$,
\[
\|K_\omega(x,y)\|\ \le\ h(x-y)\ =\ k\big(a(x-y)\big),
\]
since $x-y\in L-L$.

\emph{(i).} For $c>0$, monotonicity and iterated doubling give
$w(ct)\le D^{\,\lceil\log_2c\rceil\vee0}\,w(t)$ for all $t\ge0$.
Hence, by \eqref{eq:bilip}, for every $z\in L-L$,
\[
w(|a(z)|)\le w(c_+|z|)\le D_+\,w(|z|),
\qquad
w(|z|)\le w\big(c_-^{-1}|a(z)|\big)\le D_-\,w(|a(z)|),
\]
with $D_+:=D^{\lceil\log_2c_+\rceil\vee0}$ and
$D_-:=D^{\lceil\log_2c_-^{-1}\rceil\vee0}$. Therefore, for
$h\mapsto k$,
\[
\|k\|_{\ell^{1,\nu_w}}
=\sum_{z\in L-L}h(z)\,w(|a(z)|)\ \le\ D_+\,\|h\|_{w},
\]
and, for $k\mapsto h$,
\[
\|h\|_{w}=\sum_{z\in L-L}k(a(z))\,w(|z|)
\ \le\ D_-\sum_{z\in L-L}k(a(z))\,w(|a(z)|)
\ \le\ D_-\,\|k\|_{\ell^{1,\nu_w}},
\]
the last step dropping the (nonnegative) terms of
$\|k\|_{\ell^{1,\nu_w}}$ indexed by $g\notin a(L-L)$. Taking infima,
\[
D_-^{-1}\,\|K\|_{\mathcal E^{w}}\ \le\ \|K\|_{\mathcal K^{\nu_w}}
\ \le\ D_+\,\|K\|_{\mathcal E^{w}} ,
\]
and in particular the classes coincide. For $w(t)=(1+t)^{s}$ one may
take $D=2^{s}$, since $1+2t\le2(1+t)$, and $\nu_w(g)=(1+|g|)^{s}$.

\emph{(ii).} Put $\mu:=\lambda/c_+^{\alpha}$ and
$\lambda':=\mu\,c_-^{\alpha}=\lambda\,(c_-/c_+)^{\alpha}$. By
\eqref{eq:bilip}, for $z\in L-L$,
\[
\mu\,|a(z)|^{\alpha}\ \le\ \mu\,c_+^{\alpha}\,|z|^{\alpha}
\ =\ \lambda\,|z|^{\alpha},
\qquad
\lambda'\,|z|^{\alpha}\ \le\ \lambda'\,c_-^{-\alpha}\,|a(z)|^{\alpha}
\ =\ \mu\,|a(z)|^{\alpha} .
\]
Exponentiating and summing shows that the exchange $h\mapsto k$ gives
the first inclusion, with constant $1$:
$\|k\|_{\ell^{1,\nu_\mu}}\le\|h\|_{w_\lambda}$. The exchange
$k\mapsto h$ gives the second inclusion, also with constant $1$:
$\|h\|_{w_{\lambda'}}\le\|k\|_{\ell^{1,\nu_\mu}}$. The identity of the
unions follows because each class of either scale is contained in some
class of the other; for the
intersections, given $K\in\bigcap_\lambda\mathcal K^{\nu_\lambda}$ and
a target rate $\lambda'>0$, the second inclusion with
$\mu:=\lambda'c_-^{-\alpha}$ places $K$ in
$\mathcal E^{\alpha,\lambda'}_{\mathrm{cov}}$, and symmetrically with
the first inclusion.
The bi-Lipschitz comparison alone does not imply equivalence
of the two stretched-exponential weights at the same numerical
rate. The inclusions above compare the two radial scales;
preservation of a fixed physical rate follows instead from
Proposition~\ref{prop:physicalweights}.
\end{proof}

For a hull-covariant kernel satisfying Definition~\ref{def:cd}(a)--(b),
the counting bound of Lemma~\ref{lem:retcount} and
a uniform pointwise estimate
\[
\|K_\omega(x,y)\|\le C(1+|x-y|)^{-q}
\]
implies membership in $\mathcal E^{(1+\cdot)^s}_{\mathrm{cov}}(L)$
whenever $q>s+\kappa$.
Indeed, dyadic shells give a convergent geometric majorant
$\sum_{j\ge0}2^{j(\kappa+s-q)}$ for the weighted profile sum.

\begin{proposition}[Exact physical weights]\label{prop:physicalweights}
Let $L$ be FLC, and let $\Psi:\R^N\to\R^d$ be the linear map
satisfying $\Psi\circ a=\mathrm{id}_{\Gamma_L}$.
For either
\[
w(t)=(1+t)^s,\qquad s\ge0,
\]
or
\[
w(t)=e^{\lambda t^\alpha},
\qquad \lambda>0,\quad 0<\alpha<1,
\]
define
\[
\widetilde\nu_w(g):=w(|\Psi g|),\qquad g\in\Z^N.
\]
Then $\widetilde\nu_w$ is a normalized symmetric
submultiplicative GRS weight, and the dictionary induces an
isometric $*$-isomorphism
\[
\ell^{1,\widetilde\nu_w}(\mathfrak C)
\cong
\mathcal E^w_{\mathrm{cov}}(L).
\]
Consequently $\mathcal E^w_{\mathrm{cov}}(L)$ is spectrally
invariant in $\mathfrak C$ and stable under its holomorphic
functional calculus. In the stretched-exponential case,
the parameter $\lambda$ is preserved exactly.
\end{proposition}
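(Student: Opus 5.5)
The plan is to verify the weight axioms for $\widetilde\nu_w$ directly, then rerun the profile exchange from the proof of Corollary~\ref{cor:weightclasses} with the exact identity $|\Psi a(z)|=|z|$ for $z\in L-L$ in place of the bi-Lipschitz comparison, and finally invoke Proposition~\ref{prop:weightedSI}.

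\emph{Weight axioms.} Normalization is $w(0)=1$. Symmetry holds because $|\Psi(-g)|=|\Psi g|$. For submultiplicativity, both choices of $w$ are nondecreasing on $[0,\infty)$ and satisfy $w(u+v)\le w(u)w(v)$. For the polynomial weight this is $(1+u+v)\le(1+u)(1+v)$. For the stretched-exponential weight it follows from subadditivity of $t\mapsto t^\alpha$ for $0<\alpha<1$, that is, $(u+v)^\alpha\le u^\alpha+v^\alpha$. Combining monotonicity with the triangle inequality $|\Psi(g+h)|\le|\Psi g|+|\Psi h|$ gives $\widetilde\nu_w(g+h)\le w(|\Psi g|+|\Psi h|)\le\widetilde\nu_w(g)\widetilde\nu_w(h)$. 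For GRS, $\widetilde\nu_w(ng)\le w(n\|\Psi\||g|)$. Since $\log w(t)=o(t)$ in both cases ($s\log(1+t)$, resp.\ $\lambda t^\alpha$), we get $\widetilde\nu_w(ng)^{1/n}\to1$. The weight need not be radial on $\Z^N$, and it may be degenerate on $\ker\Psi$. Neither matters, because Definition~\ref{def:weight} asks only for these properties.

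\emph{Isometric identification.} Proposition~\ref{prop:dictionary} gives an isometric $*$-isomorphism $\ell^{1,\widetilde\nu_w}(\mathfrak C)\cong\mathcal K^{\widetilde\nu_w}_{\mathrm{cov}}(L)$, so it remains to show $\mathcal K^{\widetilde\nu_w}_{\mathrm{cov}}(L)=\mathcal E^w_{\mathrm{cov}}(L)$ isometrically. I would reuse the profile exchange from the proof of Corollary~\ref{cor:weightclasses}. Given $h$ on $L-L$, set $k=h\circ a^{-1}$ on $a(L-L)$ and $k=0$ elsewhere. Given $k$, set $h=k\circ a$. Domination is preserved exactly in both directions. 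Because $\Psi\circ a=\mathrm{id}_{\Gamma_L}$, we have $\widetilde\nu_w(a(z))=w(|z|)$ for $z\in L-L$. Hence $\|k\|_{\ell^{1,\widetilde\nu_w}}=\|h\|_w$ in the first direction. In the second direction $\|h\|_w\le\|k\|_{\ell^{1,\widetilde\nu_w}}$, obtained by dropping the terms with $g\notin a(L-L)$. Taking infima yields equality of norms. Covariance (a) and local dependence (b) are untouched by the exchange. The algebra operations are the kernel operations in both classes, so the composite map is an isometric $*$-isomorphism. Exactness of $\lambda$ is then automatic, since no comparison constants $c_\pm$ enter. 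This is the point of contrast with Corollary~\ref{cor:weightclasses}(ii).

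\emph{Consequences.} Proposition~\ref{prop:weightedSI}(i),(iii), applied with the GRS weight $\widetilde\nu_w$, gives spectral invariance in $\mathfrak C$ and stability under holomorphic functional calculus for $\ell^{1,\widetilde\nu_w}(\mathfrak C)$. These properties transport through the isomorphism to $\mathcal E^w_{\mathrm{cov}}(L)$. The only step requiring care is the submultiplicativity and GRS check, specifically confirming that the monotone subadditive structure of $w$ survives composition with the seminorm $g\mapsto|\Psi g|$. The rest is bookkeeping already carried out in the earlier proofs.
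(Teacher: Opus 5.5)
Your proposal is correct and follows essentially the same route as the paper. You check the weight axioms from monotonicity and submultiplicativity of $w$ together with the triangle inequality for $g\mapsto|\Psi g|$. You then use the exact identity $\widetilde\nu_w(a(z))=w(|z|)$ on $L-L$ to make the zero-extended profile exchange norm-preserving, and conclude via Proposition~\ref{prop:dictionary} and Proposition~\ref{prop:weightedSI}.

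The only differences are cosmetic. GRS is immediate from $\widetilde\nu_w(mg)=w(m|\Psi g|)$, so the bound by $\|\Psi\|\,|g|$ is unnecessary. Your aside about degeneracy on $\ker\Psi$ is slightly off: $\Psi|_{\Z^N}=a^{-1}$ is injective, so no nonzero lattice point lies in $\ker\Psi$. The relevant point is only that $|\Psi g|$ need not be comparable to $|g|$.
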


\begin{proof}
Both choices of $w$ are nondecreasing, satisfy $w(0)=1$ and
$w(u+v)\le w(u)w(v)$. Since
$|\Psi(g+h)|\le|\Psi g|+|\Psi h|$, the function
$\widetilde\nu_w$ is a weight. Moreover,
\[
\lim_{m\to\infty}\widetilde\nu_w(mg)^{1/m}
=
\lim_{m\to\infty}w(m|\Psi g|)^{1/m}
=1
\]
for each $g$, for both choices of $w$.

Put $\Sigma=a(L-L)$. A dominating profile on $\Z^N$ may be
set equal to zero outside $\Sigma$ without changing domination
or increasing its norm. For $z\in L-L$,
\[
\widetilde\nu_w(a(z))=w(|z|).
\]
Thus the profile correspondence
\[
k(a(z))=h(z),\qquad k=0\ \text{on }\Z^N\setminus\Sigma,
\]
preserves the weighted sums exactly. Taking infima over
dominating profiles and applying
Proposition~\ref{prop:dictionary} proves the isometric
identification. Spectral invariance and holomorphic stability
follow from Proposition~\ref{prop:weightedSI}.
\end{proof}

\section{Wiener's lemma for quasicrystals}\label{sec:consequences}
For $A=K^{\Phi}\in\mathcal K^\nu_{\mathrm{cov}}(L)$,
$\pi_\omega(A)$ denotes $\pi_\omega(\Phi)$, the bounded operator on
$\ell^2(\omega)\otimes\Cx^n$ whose integral kernel is $A_\omega$
(Proposition~\ref{prop:dictionary}(ii)--(iii)); this is consistent,
the dictionary being bijective.

\begin{corollary}[Hull-uniform Wiener lemma]\label{cor:wiener}
Let $L$ be repetitive and FLC, let $\nu$ be a GRS weight,
and let $A\in\mathcal{K}^\nu_{\mathrm{cov}}(L)$. If $\pi_\omega(A)$ is
invertible in $\Bd\big(\ell^2(\omega)\otimes\Cx^n\big)$ for \emph{one}
$\omega\in\trans$, then $A$ is invertible in
$\mathcal{K}^\nu_{\mathrm{cov}}(L)$. Consequently $\pi_{\omega'}(A)$
is invertible for \emph{every} $\omega'\in\trans$, with
$\sup_{\omega'\in\trans}\big\|\pi_{\omega'}(A)^{-1}\big\|
\le\|A^{-1}\|_{\mathcal K^\nu}$, and $A^{-1}$ obeys the same
off-diagonal decay class as $A$: a single $k\in\ell^{1,\nu}(\Z^N)$
dominates $(A^{-1})_{\omega'}$ simultaneously for all
$\omega'\in\trans$.
\end{corollary}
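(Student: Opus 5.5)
The plan is to pass to the groupoid side through the dictionary and let the spectral invariance results do the work. By Proposition~\ref{prop:dictionary}(ii), write $A=K^{\Phi}$ with a unique $\Phi\in\ell^{1,\nu}(\mathfrak C)$, so that $\|A\|_{\mathcal K^\nu}=\|\Phi\|_{1,\nu}$ and $\pi_\omega(A)=\pi_\omega(\Phi)$. In case (ii) everything is read in $M_n(\mathfrak C)$, as in Convention~\ref{conv:coeffscope}.

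\textbf{Step 1: invertibility in $\mathfrak C$.} Since $L$ is repetitive, Lemma~\ref{lem:densefaithful} makes $\pi_\omega$ a faithful representation; it is unital, so it is isometric. Its image $\pi_\omega(\mathfrak C)$ is a unital C*-subalgebra of $\Bd(\ell^2(\omega)\otimes\Cx^n)$. By spectral permanence for C*-subalgebras, invertibility of $\pi_\omega(\Phi)$ in $\Bd$ forces its inverse to lie in $\pi_\omega(\mathfrak C)$. Hence $\Phi$ is invertible in $\mathfrak C$.

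\textbf{Step 2: the inverse lies in the weighted algebra.} Proposition~\ref{prop:weightedSI}(i) gives $\mathrm{sp}_{\ell^{1,\nu}}(\Phi)=\mathrm{sp}_{\mathfrak C}(\Phi)$, which contains no $0$. So $\Phi^{-1}\in\ell^{1,\nu}(\mathfrak C)$. Transporting back through the isometric $*$-isomorphism of Proposition~\ref{prop:dictionary}, $A^{-1}:=K^{\Phi^{-1}}$ is a two-sided inverse of $A$ in $\mathcal K^\nu_{\mathrm{cov}}(L)$. Indeed, $K^{\Phi\Phi^{-1}}=K^{\mathbf 1}$ is the Kronecker kernel.

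\textbf{Step 3: consequences for every configuration.} For any $\omega'\in\trans$, $\pi_{\omega'}$ is a unital $*$-homomorphism, so $\pi_{\omega'}(\Phi^{-1})$ inverts $\pi_{\omega'}(A)$. By Proposition~\ref{prop:dictionary}(iii) its norm is at most $\|\Phi^{-1}\|_{\mathfrak C}\le\|\Phi^{-1}\|_{1,\nu}=\|A^{-1}\|_{\mathcal K^\nu}$, uniformly in $\omega'$. Note that this step needs no faithfulness of $\pi_{\omega'}$. Finally, the infimum in Definition~\ref{def:cd} is attained at $k_0(g)=\|\widetilde P_{-g}(\Phi^{-1})\|$ by Proposition~\ref{prop:dictionary}(ii). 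This single $k_0\in\ell^{1,\nu}(\Z^N)$ dominates $(A^{-1})_{\omega'}(x,y)$ for all $\omega'$ and all $x,y\in\omega'$ simultaneously.

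The only delicate point is Step~1. There, invertibility in $\Bd$ on one fibre must be lifted to invertibility in $\mathfrak C$, and this is exactly where repetitivity enters, through faithfulness of $\pi_\omega$ on a dense orbit. The remaining steps are bookkeeping through results already established.
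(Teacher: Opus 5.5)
Your proof is correct and follows the paper's argument. The paper cites Theorem~\ref{thm:main}(iii), i.e.\ faithfulness from Lemma~\ref{lem:densefaithful} together with Proposition~\ref{prop:weightedSI}(ii), to obtain $\Psi\in\ell^{1,\nu}(\mathfrak C)$ with $\pi_\omega(\Psi)=\pi_\omega(\Phi)^{-1}$, and then uses faithfulness to conclude $\Phi\Psi=\Psi\Phi=\mathbf 1$. You unpack that same step into spectral permanence followed by Proposition~\ref{prop:weightedSI}(i), and you derive the uniform norm bound and the common dominating profile $k_0$ exactly as the paper does.
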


\begin{proof}
Write $A=K^{\Phi}$ with $\Phi\in\ell^{1,\nu}(\mathfrak C)$
(Proposition~\ref{prop:dictionary}(ii)), so
$\pi_\omega(A)=\pi_\omega(\Phi)$. By Theorem~\ref{thm:main}(iii),
$\pi_\omega$ is faithful and
$\pi_\omega\big[\ell^{1,\nu}(\mathfrak C)\big]$ is inverse-closed in
$\Bd\big(\ell^2(\omega)\otimes\Cx^n\big)$; hence there is
$\Psi\in\ell^{1,\nu}(\mathfrak C)$ with
$\pi_\omega(\Psi)=\pi_\omega(\Phi)^{-1}$. Then
$\pi_\omega(\Phi\Psi-\mathbf 1)=0=\pi_\omega(\Psi\Phi-\mathbf 1)$, and
faithfulness gives $\Phi\Psi=\Psi\Phi=\mathbf 1$: the element $\Phi$
is invertible in $\ell^{1,\nu}(\mathfrak C)$, with
$\Phi^{-1}=\Psi$. Transporting through the isometric isomorphism,
$A^{-1}=K^{\Psi}\in\mathcal K^\nu_{\mathrm{cov}}(L)$. For the
consequences: applying the unital homomorphism $\pi_{\omega'}$ to
$\Phi\Psi=\Psi\Phi=\mathbf 1$ shows
$\pi_{\omega'}(A)^{-1}=\pi_{\omega'}(A^{-1})$, of norm at most
$\|A^{-1}\|_{\mathcal K^\nu}$ by
Proposition~\ref{prop:dictionary}(iii); and
$k_0(g):=\|\widetilde P_{-g}(\Psi)\|_{\mathfrak C}$ dominates
$(A^{-1})_{\omega'}$ for every $\omega'$, by
Proposition~\ref{prop:dictionary}(ii).
\end{proof}

\begin{corollary}[Overlap matrices / LCAO on quasicrystals]\label{cor:overlap}
In the setting of Corollary~\ref{cor:wiener}, let $S=\mathbf 1+A$ with
$A\in\mathcal{K}^\nu_{\mathrm{cov}}(L)$, and suppose
$\pi_\omega(S)\ \ge\ \delta>0$ on $\ell^2(\omega)\otimes\Cx^n$ for
\emph{some} $\omega\in\trans$. Then $S=S^{*}$,
$\pi_{\omega'}(S)\ge\delta$ for \emph{every} $\omega'\in\trans$, and
\[
S^{-1},\ S^{1/2},\ S^{-1/2}\ \in\ \mathcal{K}^\nu_{\mathrm{cov}}(L),
\]
each with hull-uniform decay, and
$\pi_{\omega'}\big(S^{\pm1/2}\big)=\big(\pi_{\omega'}(S)\big)^{\pm1/2}$
for every $\omega'$.
\end{corollary}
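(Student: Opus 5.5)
We transport the statement to $\ell^{1,\nu}(\mathfrak C)$ through Proposition~\ref{prop:dictionary}: write $A=K^{\Phi}$ and $S=K^{\Sigma}$ with $\Sigma=\mathbf 1+\Phi\in\ell^{1,\nu}(\mathfrak C)$. Self-adjointness comes first. Since $\pi_\omega(S)\ge\delta$, the operator $\pi_\omega(S)$ is self-adjoint, so $\pi_\omega(\Sigma-\Sigma^*)=0$. The representation $\pi_\omega$ is faithful by Lemma~\ref{lem:densefaithful}, since $L$ is repetitive, and therefore $\Sigma=\Sigma^*$. The dictionary intertwines the involutions, so $S=S^*$ as kernels.

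Next comes the positivity transfer. Faithful representations of a C*-algebra are isometric $*$-isomorphisms onto their images, and spectra are preserved under C*-subalgebra inclusion. Hence $\mathrm{sp}_{\mathfrak C}(\Sigma)=\mathrm{sp}(\pi_\omega(\Sigma))\subseteq[\delta,\infty)$. For every $\omega'$ we have $\mathrm{sp}(\pi_{\omega'}(\Sigma))\subseteq\mathrm{sp}_{\mathfrak C}(\Sigma)$, because $\pi_{\omega'}$ is a unital $*$-homomorphism and in fact also faithful. A self-adjoint operator with spectrum in $[\delta,\infty)$ satisfies $\pi_{\omega'}(S)\ge\delta$.

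For the functional calculus, Proposition~\ref{prop:weightedSI}(i) gives $\mathrm{sp}_{\ell^{1,\nu}(\mathfrak C)}(\Sigma)=\mathrm{sp}_{\mathfrak C}(\Sigma)\subseteq[\delta,\|\Sigma\|_{\mathfrak C}]$. The functions $\lambda\mapsto\lambda^{-1},\lambda^{1/2},\lambda^{-1/2}$ (principal branch) are holomorphic on the right half-plane, which is a neighbourhood of this compact interval. By Proposition~\ref{prop:weightedSI}(iii), the Riesz--Dunford integrals $f(\Sigma)$ lie in $\ell^{1,\nu}(\mathfrak C)$, and $\iota$ sends them to the C*-functional calculus $f(\Sigma)$ in $\mathfrak C$. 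Transporting back through the isometric $*$-isomorphism of Proposition~\ref{prop:dictionary}(ii) puts $S^{-1},S^{\pm1/2}$ in $\mathcal K^\nu_{\mathrm{cov}}(L)$. Each of them is dominated, hull-uniformly, by the single profile $k_0(g)=\|\widetilde P_{-g}(f(\Sigma))\|$.

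The identity $\pi_{\omega'}(S^{\pm1/2})=\pi_{\omega'}(S)^{\pm1/2}$ holds because unital $*$-homomorphisms between C*-algebras commute with continuous functional calculus. Concretely, $\pi_{\omega'}(f(\Sigma))=f(\pi_{\omega'}(\Sigma))$ by applying $\pi_{\omega'}$ to the Riesz integral. The holomorphic square root restricted to the positive spectrum is the positive square root, so $\pi_{\omega'}(\Sigma^{1/2})$ is the unique positive square root of $\pi_{\omega'}(S)$.

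The only delicate step is the consistency between the Banach-algebra holomorphic calculus in $\ell^{1,\nu}$ and the C*-square root. This is settled by the last clause of Proposition~\ref{prop:weightedSI}(iii), together with the fact that the holomorphic and continuous calculi agree on holomorphic functions in a C*-algebra.
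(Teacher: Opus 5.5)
Your proposal is correct and follows the paper's proof essentially step for step. It uses faithfulness of $\pi_\omega$ for $\Sigma=\Sigma^*$, spectral permanence to get $\mathrm{sp}_{\mathfrak C}(\Sigma)\subseteq[\delta,\|\Sigma\|_{\mathfrak C}]$ and positivity at every $\omega'$, then Proposition~\ref{prop:weightedSI}(i),(iii) for the Riesz--Dunford integrals, intertwined by $\pi_{\omega'}$. The differences are cosmetic: you obtain $\mathrm{sp}(\pi_{\omega'}(\Sigma))\subseteq\mathrm{sp}_{\mathfrak C}(\Sigma)$ from unitality alone, and you get self-adjointness of $\Sigma^{1/2}$ via $\iota$ and the C*-calculus, whereas the paper uses faithfulness of the $\pi_{\omega'}$ for both.
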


\begin{proof}
Write $S=K^{\Sigma}$, $\Sigma=\mathbf 1+\Phi\in\ell^{1,\nu}(\mathfrak C)$.
Since $\pi_\omega(\Sigma)\ge\delta$ is self-adjoint and $\pi_\omega$
is a faithful $*$-homomorphism (Theorem~\ref{thm:main}(iii)),
$\Sigma^{*}=\Sigma$, i.e.\ $S=S^{*}$. The image
$\pi_{\omega}(\mathfrak C)$ is a unital C*-subalgebra of
$\Bd\big(\ell^2(\omega)\otimes\Cx^n\big)$ and $\pi_\omega$ is
isometric, so by spectral permanence of C*-subalgebras
\[
\mathrm{sp}_{\mathfrak C}(\Sigma)
=\mathrm{sp}_{\Bd}\big(\pi_\omega(\Sigma)\big)
\subseteq[\delta,\|\Sigma\|_{\mathfrak C}] .
\]
The same identity at any other $\omega'$ gives
$\mathrm{sp}_{\Bd}(\pi_{\omega'}(\Sigma))\subseteq[\delta,\infty)$
with $\pi_{\omega'}(\Sigma)$ self-adjoint, i.e.\
$\pi_{\omega'}(S)\ge\delta$ for every $\omega'\in\trans$.

By Proposition~\ref{prop:weightedSI}(i),
$\mathrm{sp}_{\ell^{1,\nu}(\mathfrak C)}(\Sigma)
=\mathrm{sp}_{\mathfrak C}(\Sigma)\subseteq[\delta,\|\Sigma\|_{\mathfrak C}]$,
a compact set avoiding the branch cut $(-\infty,0]$. Each of
$f(z)=z^{-1},\,z^{1/2},\,z^{-1/2}$ (principal branches) is holomorphic
on a neighbourhood of it, so stability under the holomorphic
functional calculus, Proposition~\ref{prop:weightedSI}(iii), yields
$\Sigma^{-1},\Sigma^{\pm1/2}\in\ell^{1,\nu}(\mathfrak C)$, defined by
the Riesz--Dunford integral. Any bounded unital homomorphism
intertwines that integral, so
$\pi_{\omega'}(\Sigma^{\pm1/2})=\big(\pi_{\omega'}(\Sigma)\big)^{\pm1/2}$
and $\pi_{\omega'}(\Sigma^{-1})=\pi_{\omega'}(\Sigma)^{-1}$; in
particular $\pi_{\omega'}(\Sigma^{1/2})$ is self-adjoint for every
$\omega'$, and faithfulness gives
$\big(\Sigma^{1/2}\big)^{*}=\Sigma^{1/2}$, so
$\Sigma^{1/2}$ is the (unique) positive square root. Transporting
through Proposition~\ref{prop:dictionary}(ii),
$S^{-1},S^{\pm1/2}\in\mathcal K^\nu_{\mathrm{cov}}(L)$, each dominated
by a single element of $\ell^{1,\nu}(\Z^N)$ uniformly over the hull.
\end{proof}

\begin{remark}[Fermi projectors]
Let $H=H^*\in\mathcal K^\nu_{\mathrm{cov}}(L)$, where $\nu$
is a GRS weight. At positive temperature, the Fermi--Dirac
function is holomorphic on a neighbourhood of the real
spectrum of $H$. Thus holomorphic functional calculus yields
$f_T(H)\in\mathcal K^\nu_{\mathrm{cov}}(L)$, with a
hull-uniform profile and without a spectral-gap assumption.

At zero temperature, suppose the Fermi level lies in a gap
of $\mathrm{sp}_{\mathfrak C}(H)$. The occupied part of the
spectrum is then separated from the remaining spectrum, and
its Riesz projection belongs to the same weighted algebra.
This preserves the available decay class; a spectral gap
alone does not improve it to exponential decay.

Exponential localization requires additional assumptions on
the Hamiltonian, such as appropriate uniform finite-range or
exponentially decaying interactions, together with the
corresponding resolvent estimates; see \cite{BBR,CO,OTC}.
Here are precise perturbation thresholds. In a fixed weighted
algebra let $V=V^*$ and put $\varepsilon=\|V\|_{\mathcal K^\nu}$
(or $\varepsilon=\|V\|_{\mathcal E^w}$ in an exact physical class).
These norms dominate the ambient C*-norm. If $H$ has the gap
$(E_F-\Delta,E_F+\Delta)$ and $\varepsilon<\Delta$, then $H+V$
has the gap
\[
(E_F-(\Delta-\varepsilon),\ E_F+(\Delta-\varepsilon)).
\]
Indeed, for real $t$ in this interval,
$\|(t-H)^{-1}V\|_{\mathfrak C}<1$, and a Neumann series makes
$t-H-V$ invertible. Hence its Fermi projection is in the same
weighted algebra. Likewise, if $S\ge\delta1$ and
$\varepsilon<\delta$, then $S+V\ge(\delta-\varepsilon)1$, so the
overlap conclusions persist. At positive temperature no smallness
condition is needed for membership: every self-adjoint $H+V$ in the
fixed weighted algebra admits the same Fermi--Dirac functional
calculus. These are statements within that algebra, not assertions
that a gap improves the decay rate.
\end{remark}

\begin{remark}[Magnetic twist]\label{rem:magnetic}
Represent a constant magnetic field by a real skew-symmetric
bilinear form $B$ on $\R^d$, and set
\[
\sigma_B(g,h)
=
\exp\left(\frac{i}{2}B(\Psi g,\Psi h)\right).
\]
This is a normalized $\T$-valued group $2$-cocycle.
Its pullback through $c$ defines the twisted groupoid algebra,
equivalently the cocycle deformation
$\mathfrak C\{\sigma_B\}$ of \cite[Rem.~2.7]{JM}.

With the convention that twisted convolution includes the
factor $\sigma_B(c(\alpha),c(\beta))$ for $\alpha\beta=\gamma$,
the regular-representation kernel is
\[
K^{\Phi,B}_\omega(x,y)
=
e^{\frac{i}{2}B(x,y)}
\Phi(\omega-x,y-x).
\]
Indeed, the phase in the matrix coefficient is
$\sigma_B(a(y-x),-a(y))=e^{\frac{i}{2}B(x,y)}$.
Consequently
\[
K^{\Phi,B}_{\omega-t}(x-t,y-t)
=
e^{\frac{i}{2}B(t,x-y)}
K^{\Phi,B}_\omega(x,y).
\]
This is magnetic covariance, not the ordinary covariance
of Definition~\ref{def:cd}.

The phase has modulus one, so dominating profiles and the
bisection norm estimates are unchanged. The deformed algebra
retains its topological $\Z^N$-grading, and
Proposition~\ref{prop:weightedSI} applies to every GRS weight.
Thus the Wiener conclusion extends to the magnetic kernel
class with the covariance displayed above. More explicitly, removing
the phase $e^{\frac i2B(x,y)}$ converts magnetic covariance to the
ordinary covariance of Lemma~\ref{lem:transport}; this proves the
bijection and preserves the least-profile norm. Ordinary composition
of the magnetic matrices corresponds to twisted convolution.
The reduced twisted algebra has a faithful unit-space expectation:
the sum-of-squares argument of Step~5 of Theorem~\ref{thm:grading}
is unchanged, because the cocycle is normalized and
$\sigma_B(g,-g)=1$. Its diagonal matrix coefficients are again
the unit-space expectation evaluated at the ranges of the fibre
arrows. The proof of Lemma~\ref{lem:densefaithful} therefore applies,
so the one-configuration conclusion still requires a dense orbit
(or repetitivity of the hull).
\end{remark}

\begin{remark}[What this does \emph{not} give]
No fixed exponential weights (Remark~\ref{rem:weights}); no rates
for the SCF/nonlinear problem; and no infinite-local-complexity
tilings, since the framework assumes FLC with respect to translations.
\end{remark}

\subsection{Norm-controlled inversion: the quantitative regime}

Theorem~\ref{thm:main} and Proposition~\ref{prop:weightedSI} assert
\emph{membership}: the inverse of an element of
$\ell^{1,\nu}(\mathfrak{C})$ again lies in
$\ell^{1,\nu}(\mathfrak{C})$. They give no bound on
$\|\Phi^{-1}\|_{\ell^{1,\nu}}$ in terms of measurable data. Following
\cite{GK2}, the algebra $\ell^{1,\nu}(\mathfrak{C})$ is said to admit
\emph{norm-controlled inversion} in $\mathfrak{C}$ if there is a
function $h:(0,\infty)^2\to(0,\infty)$, nondecreasing in each
variable, such that
$\|\Phi^{-1}\|_{\ell^{1,\nu}}\le
h\big(\|\Phi\|_{\ell^{1,\nu}},\|\Phi^{-1}\|_{\mathfrak{C}}\big)$
whenever $\Phi\in\ell^{1,\nu}(\mathfrak{C})$ is invertible in
$\mathfrak{C}$. The scalar history is instructive: for the unweighted
Wiener algebra $\ell^{1}(\Z^d)\subset C(\mathbb{T}^d)$ norm control
\emph{fails} (Nikolski \cite{Nik}; see \cite[\S1]{SS}), while polynomial weights of positive order restore it in the
classical settings \cite{ENZ,GK1,GK2,GKII,SS}.
Flores's cross-section theorem \cite[Theorem~1.4]{Flores25} also
applies to the address Fell bundle with
$\nu_s(g)=(1+|g|)^s$: choose $p>N/s$ so that
$\nu_s^{-1}\in\ell^p(\Z^N)$.
Thus existence of polynomial norm control in this graded algebra
has a direct antecedent. The purpose of the estimates below is to
track the supported return-vector growth through
$\theta=\kappa/(\kappa+s)$ and to obtain explicit control of
hull-uniform profiles, including exact physical
stretched-exponential weights. The unweighted examples transfer
Nikolski's obstruction rather than introduce a new endpoint mechanism.

Throughout, $\nu_s(g):=(1+|g|)^s$ on $\Z^N$ for $s>0$; the
standing inequalities are
$\|\Phi\|_{\mathfrak{C}}\le\|\Phi\|_{\ell^{1}}\le\|\Phi\|_{\ell^{1,\nu_s}}$.

\begin{lemma}[Growth of the return vectors]\label{lem:retcount}
For $\rho\ge0$ put
$\mathcal{N}_L(\rho):=\#\big((L-L)\cap\bar B_\rho\big)$, the ball
$\bar B_\rho\subset\R^d$. Then:
\begin{enumerate}
\item[(i)] For every FLC Delone set,
$\mathcal{N}_L(\rho)\le 3^N\max(1,c_+)^N\,(1+\rho)^N$, with $c_+$ from
Lemma~\ref{lem:addcomp}.
\item[(ii)] If $L$ is a Meyer set, so that $L-L$ is uniformly discrete
with separation $r'>0$, then
$\mathcal{N}_L(\rho)\le(1+2/r')^{d}\,(1+\rho)^{d}$.
\item[(iii)] For every $(r,R)$-Delone set,
$\mathcal{N}_L(\rho)\ge c_d\,(\rho/R)^{d}$ for $\rho\ge R$.
\end{enumerate}
Fix $\kappa$ and $C_\kappa$ once and for all so that
$\mathcal{N}_L(\rho)\le C_\kappa(1+\rho)^{\kappa}$; by (i)--(iii) this
is always possible with $d\le\kappa\le N$, and $\kappa=d$ when $L$ is
Meyer.
\end{lemma}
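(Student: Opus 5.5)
The three parts are counting estimates, each obtained by comparing $L-L$ with a lattice or by a packing argument. For (i) I would use the upper bound in \eqref{eq:bilip}: the address map is injective on $L-L$ and sends $(L-L)\cap\bar B_\rho$ into the integer points $g\in\Z^N$ with $|g|\le c_+\rho$. It then suffices to bound the number of integer points in a Euclidean ball of radius $t=c_+\rho$ in $\R^N$. The cube $[-t,t]^N$ contains that ball and has at most $(2t+1)^N$ integer points. Since $2c_+\rho+1\le 3\max(1,c_+)(1+\rho)$, this gives the stated bound. No volume constants are needed.

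For (ii) I would pack: the points of $(L-L)\cap\bar B_\rho$ are $r'$-separated, so the open balls of radius $r'/2$ around them are disjoint. They lie in $\bar B_{\rho+r'/2}$, and comparing volumes gives
\[
\mathcal N_L(\rho)\le\Big(\frac{\rho+r'/2}{r'/2}\Big)^d=\big(1+2\rho/r'\big)^d\le(1+2/r')^d(1+\rho)^d .
\]
The final inequality holds because $1+2\rho/r'\le(1+2/r')(1+\rho)$.

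For (iii) I would fix $y_0\in L$. Then $(L-y_0)\cap\bar B_\rho\subseteq(L-L)\cap\bar B_\rho$, so it suffices to count points of $L$ in $\bar B_\rho(y_0)$. The relative-denseness covering argument from Step~4 of Proposition~\ref{prop:basic} applies here. The balls $\bar B(x,R)$ with $x\in L\cap\bar B_\rho(y_0)$ cover $\bar B_{\rho-R}(y_0)$, which gives $\#\ge((\rho-R)/R)^d$. This settles $\rho\ge2R$ with $c_d=2^{-d}$. For $R\le\rho<2R$ I would use only the point $0$, which gives the count $1\ge 2^{-d}(\rho/R)^d$. So $c_d=2^{-d}$ works throughout.

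For the final assertion, (i) gives admissible pairs $(\kappa,C_\kappa)$ with $\kappa=N$, and (ii) gives $\kappa=d$ for Meyer sets. Any admissible $\kappa$ must satisfy $\kappa\ge d$, since otherwise (iii) contradicts the bound $C_\kappa(1+\rho)^\kappa$ as $\rho\to\infty$. The requirement $\kappa\le N$ is a choice: when a smaller exponent is available we take it, and otherwise we take $\kappa=N$ from (i). None of these steps is a real obstacle. The only point needing care is tracking the constants in (i) so that the integer-point count matches the stated $3^N\max(1,c_+)^N$.
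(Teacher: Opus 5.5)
Your proposal is correct and matches the paper's proof step for step. In (i) you use injectivity of the address map together with the upper bound in \eqref{eq:bilip} and an integer-point cube count; in (ii) a disjoint $r'/2$-ball volume comparison; in (iii) the relative-denseness covering of $\bar B_{\rho-R}(x_0)$, with the trivial return vector $0$ handling $R\le\rho<2R$, which gives $c_d=2^{-d}$. You also note explicitly that (iii) forces every admissible $\kappa$ to satisfy $\kappa\ge d$, which usefully spells out the final clause the paper leaves implicit.
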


\begin{proof}
(i) The address map is injective on $\Gamma_L\supseteq L-L$ and
$|a(z)|\le c_+|z|$ by \eqref{eq:bilip}, so
$\mathcal{N}_L(\rho)\le\#\big(\Z^N\cap\bar B_{c_+\rho}\big)
\le(2c_+\rho+1)^N$. (ii) The points of $(L-L)\cap\bar B_\rho$ are
$r'$-separated, so the balls of radius $r'/2$ centred at them are
disjoint and contained in $\bar B_{\rho+r'/2}$; compare volumes.
(iii) Fix $x_0\in L$; then
$(L\cap\bar B_\rho(x_0))-x_0\subseteq(L-L)\cap\bar B_\rho$, and, for $\rho\ge2R$, the $R$-balls centred at
$L\cap\bar B_\rho(x_0)$ cover $\bar B_{\rho-R}(x_0)$.
Thus the number of centres is at least
$((\rho-R)/R)^d\ge(\rho/(2R))^d$.
For $R\le\rho<2R$, the point $x_0$ gives the same bound.
One may therefore take $c_d=2^{-d}$.
\end{proof}

\begin{remark}
If $L$ is linearly repetitive with repetitivity function
$M(\rho)\le C_{\mathrm{LR}}\,\rho$, then every return vector of length
$\le\rho$ is realized inside $\bar B_{M(\rho)+\rho}(x_0)$, whence
$\mathcal{N}_L(\rho)\le\big[\#(L\cap\bar B_{M(\rho)+\rho}(x_0))\big]^2
\lesssim\rho^{2d}$: one may take $\kappa\le 2d$. Note also that (i)
answers a natural question: for FLC sets the growth of the return
vectors is \emph{always} at most polynomial, of order at most the rank
$N$.
\end{remark}

\begin{lemma}[Interpolation of the $\ell^1$-norm]\label{lem:interp}
Set $\theta:=\dfrac{\kappa}{\kappa+s}\in(0,1)$. There is a constant
$C_\kappa':=C_\kappa\max(1,c_-^{-1})^{\kappa}$ such that, for every
$\Phi\in\ell^{1,\nu_s}(\mathfrak{C})$,
\begin{equation}\label{eq:interp}
\|\Phi\|_{\ell^{1}}\ \le\ (C_\kappa'+1)\,
\|\Phi\|_{\ell^{1,\nu_s}}^{\,\theta}\,
\|\Phi\|_{\mathfrak{C}}^{\,1-\theta}.
\end{equation}
\end{lemma}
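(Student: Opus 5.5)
The approach is the standard splitting argument behind such interpolation inequalities: cut the graded expansion of $\Phi$ at a radius $\rho$, bound the inner part by the C*-norm times the number of supported degrees, and bound the outer part by the weighted norm times $(1+\rho)^{-s}$. Write $\phi(g):=\|\widetilde P_g(\Phi)\|_{\mathfrak C}$. By Proposition~\ref{prop:cocycle} the coefficients vanish unless $g\in a(L-L)$, and each satisfies $\phi(g)\le\|\Phi\|_{\mathfrak C}$ because $\widetilde P_g$ is contractive (Step~4 of Theorem~\ref{thm:grading}). For $\rho\ge0$ split
\[
\|\Phi\|_{\ell^1}=\sum_{|g|\le\rho}\phi(g)+\sum_{|g|>\rho}\phi(g).
\]

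\emph{Inner part.} Each supported degree with $|g|\le\rho$ has the form $g=a(z)$ with $z\in L-L$. The lower bound in \eqref{eq:bilip} gives $|z|\le c_-^{-1}|g|\le c_-^{-1}\rho$. Since $a$ is injective, Lemma~\ref{lem:retcount} counts these degrees:
\[
\#\{g\in a(L-L):|g|\le\rho\}\le\mathcal N_L(c_-^{-1}\rho)\le C_\kappa(1+c_-^{-1}\rho)^\kappa\le C_\kappa'(1+\rho)^\kappa .
\]
The inner sum is therefore at most $C_\kappa'(1+\rho)^\kappa\|\Phi\|_{\mathfrak C}$.

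\emph{Outer part.} For $|g|>\rho$ we have $1\le(1+|g|)^s(1+\rho)^{-s}$. The outer sum is therefore at most $(1+\rho)^{-s}\|\Phi\|_{\ell^{1,\nu_s}}$.

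\emph{Optimization.} Put $A=\|\Phi\|_{\ell^{1,\nu_s}}$ and $B=\|\Phi\|_{\mathfrak C}$, so $B\le A$ and we may assume $B>0$. Choose $\rho\ge0$ with
\[
(1+\rho)^{\kappa+s}=A/B,
\]
which is possible since $A/B\ge1$. Then $(1+\rho)^\kappa B=A^\theta B^{1-\theta}$ and $(1+\rho)^{-s}A=A^\theta B^{1-\theta}$. Adding the two bounds gives $(C_\kappa'+1)A^\theta B^{1-\theta}$, which is \eqref{eq:interp}.

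The main point needing care is the inner count. The count must run over the supported degrees $a(L-L)$, not over all of $\Z^N$. Converting the address radius back to a Euclidean radius uses $c_-$, and that conversion is exactly where the factor $\max(1,c_-^{-1})^\kappa$ in $C_\kappa'$ comes from, via $1+c_-^{-1}\rho\le\max(1,c_-^{-1})(1+\rho)$. Beyond this the argument is routine.
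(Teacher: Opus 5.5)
Your proof is correct and takes essentially the same route as the paper's proof. Both arguments restrict to the supported degrees $a(L-L)$ and count them using the lower bound in \eqref{eq:bilip} together with Lemma~\ref{lem:retcount}. They bound each coefficient by $\|\Phi\|_{\mathfrak C}$ via contractivity of $\widetilde P_g$, split at a radius, and balance the two parts by choosing $(1+\rho)^{\kappa+s}=\|\Phi\|_{\ell^{1,\nu_s}}/\|\Phi\|_{\mathfrak C}$. One small addition on your side: the estimate $1+c_-^{-1}\rho\le\max(1,c_-^{-1})(1+\rho)$ makes explicit where $C_\kappa'$ comes from, a step the paper leaves implicit.
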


\begin{proof}
\emph{Support.} $\Phi_g\ne0$ forces $c^{-1}(g)\ne\emptyset$, i.e.\
$g\in S:=c(\GL)\subseteq a(L-L)$ (Lemma~\ref{lem:hullcompat}). By
\eqref{eq:bilip}, $|a(z)|\le R$ implies $|z|\le R/c_-$, so
$\#(S\cap\bar B_R)\le\mathcal{N}_L(R/c_-)\le C_\kappa'(1+R)^{\kappa}$.

\emph{Coefficient bound.} $\|\Phi_g\|_{\mathfrak{C}}\le
\|\iota\Phi\|_{\mathfrak{C}}$ for every $g$: the Fourier-coefficient
maps of a topologically graded C*-algebra are contractive \cite{Exel}. In
the present setting makes this immediate: for $b\in C^*_r(\GL)$ the
continuous symbol $j(b)\in C_0(\GL)$ satisfies
$\|j(b)\|_\infty\le\|b\|$, the restriction $j(b)\,1_{c^{-1}(g)}$
represents $\widetilde P_g(b)$, and for a function supported on a bisection the
reduced norm equals the sup norm.

\emph{Split and optimize.} The assertion is immediate for $\Phi=0$,
so assume $\Phi\ne0$. For any $R\ge0$,
\[
\|\Phi\|_{\ell^{1}}
=\sum_{g\in S,\,|g|\le R}\|\Phi_g\|+\sum_{|g|>R}\|\Phi_g\|
\ \le\ C_\kappa'(1+R)^{\kappa}\,\|\Phi\|_{\mathfrak{C}}
+(1+R)^{-s}\,\|\Phi\|_{\ell^{1,\nu_s}},
\]
using $\nu_s(g)\ge(1+R)^{s}$ for $|g|>R$ in the second sum. Setting
$1+R=t^{1/(\kappa+s)}$ with
$t:=\|\Phi\|_{\ell^{1,\nu_s}}/\|\Phi\|_{\mathfrak{C}}\ge1$ makes both
terms equal to
$\|\Phi\|_{\ell^{1,\nu_s}}^{\kappa/(\kappa+s)}
\|\Phi\|_{\mathfrak{C}}^{s/(\kappa+s)}$ up to the constants shown.
\end{proof}

\begin{proposition}[Differential subalgebra of explicit order]\label{prop:diffsub}
Let $s>0$, $\theta=\kappa/(\kappa+s)$, and
$C_s:=\max(1,2^{s-1})$. Then, for all
$\Phi,\Psi\in\ell^{1,\nu_s}(\mathfrak{C})$,
\begin{equation}\label{eq:leibniz}
\|\Phi\Psi\|_{\ell^{1,\nu_s}}\ \le\
C_s\big(\|\Phi\|_{\ell^{1,\nu_s}}\|\Psi\|_{\ell^{1}}
+\|\Phi\|_{\ell^{1}}\|\Psi\|_{\ell^{1,\nu_s}}\big),
\end{equation}
and consequently, with $D:=2C_s(C_\kappa'+1)$,
\begin{equation}\label{eq:brandenburg}
\|\Phi^2\|_{\ell^{1,\nu_s}}\ \le\
D\,\|\Phi\|_{\ell^{1,\nu_s}}^{\,1+\theta}\,
\|\Phi\|_{\mathfrak{C}}^{\,1-\theta}.
\end{equation}
Thus $\ell^{1,\nu_s}(\mathfrak{C})$ satisfies the hypothesis
{\rm(2.2)} of \cite{SS} with exponent $\theta$; equivalently, it is a
differential $*$-subalgebra of $\mathfrak{C}$ of order
$1-\theta=s/(\kappa+s)$ in the product-form convention of
\cite[(2.1)]{ShSu}.
\end{proposition}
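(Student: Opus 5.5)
The plan is to prove the Leibniz-type inequality \eqref{eq:leibniz} at the level of homogeneous components, and then combine it with the interpolation inequality of Lemma~\ref{lem:interp}.

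\emph{Step 1: the componentwise bound.} Write $\Phi_g=\widetilde P_g(\Phi)$. By Lemma~\ref{lem:weightedbanach}, $\ell^{1,\nu_s}(\mathfrak C)$ is realized as the coordinate algebra $\mathcal A_{\nu_s}$, with $(\Phi\Psi)_g=\sum_h\Phi_h\Psi_{g-h}$, and the sum converges absolutely. The grading gives
\[
\|(\Phi\Psi)_g\|\le\sum_h\|\Phi_h\|\,\|\Psi_{g-h}\|.
\]

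\emph{Step 2: splitting the weight.} The elementary inequality we need is
\[
(1+|g|)^s\le C_s\big((1+|h|)^s+(1+|g-h|)^s\big).
\]
To see it, note that $1+|g|\le(1+|h|)+(1+|g-h|)$. We then use $(u+v)^s\le\max(1,2^{s-1})(u^s+v^s)$ for $u,v\ge0$. For $s\ge1$ this is convexity of $t\mapsto t^s$; for $0<s<1$ it is subadditivity of $t\mapsto t^s$.

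Multiply the bound of Step~1 by $\nu_s(g)$ and sum over $g$. Tonelli lets us reindex $k=g-h$, and the sum splits into two pieces:
\[
\sum_{h,k}\nu_s(h)\|\Phi_h\|\,\|\Psi_k\|=\|\Phi\|_{\ell^{1,\nu_s}}\|\Psi\|_{\ell^1},\qquad
\sum_{h,k}\|\Phi_h\|\,\nu_s(k)\|\Psi_k\|=\|\Phi\|_{\ell^1}\|\Psi\|_{\ell^{1,\nu_s}}.
\]
This gives \eqref{eq:leibniz}.

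\emph{Step 3: the squared estimate.} Take $\Psi=\Phi$ in \eqref{eq:leibniz}, which gives
\[
\|\Phi^2\|_{\ell^{1,\nu_s}}\le 2C_s\|\Phi\|_{\ell^{1,\nu_s}}\|\Phi\|_{\ell^1}.
\]
Insert \eqref{eq:interp} into the factor $\|\Phi\|_{\ell^1}$, namely $\|\Phi\|_{\ell^1}\le(C_\kappa'+1)\|\Phi\|_{\ell^{1,\nu_s}}^{\theta}\|\Phi\|_{\mathfrak C}^{1-\theta}$. The result is \eqref{eq:brandenburg} with $D=2C_s(C_\kappa'+1)$.

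\emph{Step 4: matching the cited conventions.} It remains to identify \eqref{eq:brandenburg} with hypothesis (2.2) of \cite{SS}. That hypothesis reads $\|a^2\|_{\mathcal A}\le D\|a\|_{\mathcal A}^{1+\theta}\|a\|_{\mathcal B}^{1-\theta}$ for a dense $*$-subalgebra $\mathcal A$ of $\mathcal B$, here $\mathcal A=\ell^{1,\nu_s}(\mathfrak C)$ and $\mathcal B=\mathfrak C$. The density required there is supplied by Lemma~\ref{lem:weightedbanach}, and the norm inequality $\|\Phi\|_{\mathfrak C}\le\|\Phi\|_{\ell^{1,\nu_s}}$ is the standing one. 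The "order $1-\theta$" terminology of \cite[(2.1)]{ShSu} is the same inequality written with exponents $1+\theta$ and $1-\theta$ on the two norms, so no further argument is needed.

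None of the steps is a genuine obstacle. The only points that need care are the constant $C_s$ in the two ranges of $s$, and stating exactly which form of the hypothesis in \cite{SS,ShSu} is meant.
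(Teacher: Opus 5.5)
Your Steps~1--3 coincide with the paper's proof. You use the same coordinate majorization and the same weight splitting $\nu_s(g)\le C_s\big(\nu_s(h)+\nu_s(g-h)\big)$, from subadditivity or convexity of $t\mapsto t^s$. Tonelli then gives \eqref{eq:leibniz}, and $\Psi=\Phi$ together with \eqref{eq:interp} gives \eqref{eq:brandenburg} with $D=2C_s(C_\kappa'+1)$. That part, and the verification of hypothesis (2.2) of \cite{SS}, is correct.

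The flaw is in Step~4, where you claim that the product-form condition of \cite[(2.1)]{ShSu} is ``the same inequality'' as \eqref{eq:brandenburg}. It is not. The product form is a bilinear estimate for two independent elements,
\[
\|\Phi\Psi\|_{\ell^{1,\nu_s}}\le D_0\,\|\Phi\|_{\ell^{1,\nu_s}}\|\Psi\|_{\ell^{1,\nu_s}}
\Big(\Big(\frac{\|\Phi\|_{\mathfrak C}}{\|\Phi\|_{\ell^{1,\nu_s}}}\Big)^{1-\theta}
+\Big(\frac{\|\Psi\|_{\mathfrak C}}{\|\Psi\|_{\ell^{1,\nu_s}}}\Big)^{1-\theta}\Big).
\]
Setting $\Psi=\Phi$ in it gives \eqref{eq:brandenburg}, not the other way round. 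An estimate for squares of single elements does not by itself control products of two different ones, so ``no further argument is needed'' leaves the last assertion unproved. The repair uses only your own ingredients. Do not specialize \eqref{eq:leibniz} to $\Psi=\Phi$; instead insert \eqref{eq:interp} into the $\ell^1$-factor of each of its two terms, e.g.
\[
\|\Phi\|_{\ell^{1,\nu_s}}\|\Psi\|_{\ell^{1}}\le (C_\kappa'+1)\,\|\Phi\|_{\ell^{1,\nu_s}}\|\Psi\|_{\ell^{1,\nu_s}}\Big(\frac{\|\Psi\|_{\mathfrak C}}{\|\Psi\|_{\ell^{1,\nu_s}}}\Big)^{1-\theta},
\]
and symmetrically for the other term. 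This yields the product form with $D_0=C_s(C_\kappa'+1)$ for nonzero $\Phi,\Psi$. It is exactly how the paper's proof obtains the order-$(1-\theta)$ statement.
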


\begin{proof}
Since $1+|g|\le(1+|h|)+(1+|g-h|)$ and $t\mapsto t^{s}$ is subadditive
for $s\le1$ and convex for $s\ge1$, one has
$\nu_s(g)\le C_s\big(\nu_s(h)+\nu_s(g-h)\big)$. With
$\phi(g):=\|\Phi_g\|$, $\psi(g):=\|\Psi_g\|$ and the coordinate
majorization $\|(\Phi\Psi)_g\|\le(\phi*\psi)(g)$,
\[
\|\Phi\Psi\|_{\ell^{1,\nu_s}}
\le\sum_{g,h}\nu_s(g)\,\phi(h)\psi(g-h)
\le C_s\sum_{g,h}\big(\nu_s(h)+\nu_s(g-h)\big)\phi(h)\psi(g-h),
\]
which is \eqref{eq:leibniz} by Tonelli. Taking $\Psi=\Phi$ and
inserting \eqref{eq:interp} yields \eqref{eq:brandenburg}; inserting
\eqref{eq:interp} into each factor of \eqref{eq:leibniz} instead gives
the product form of order $1-\theta$.
\end{proof}

\begin{theorem}[Norm-controlled inversion for polynomial weights]\label{thm:nci}
Let $L$ be an FLC Delone set, $s>0$, and $\theta=\kappa/(\kappa+s)$ as
above. Then $\ell^{1,\nu_s}(\mathfrak{C})$ admits norm-controlled
inversion in $\mathfrak{C}=C^*_r(\GL)$: it is inverse-closed, and
whenever $\Phi\in\ell^{1,\nu_s}(\mathfrak{C})$ is invertible in
$\mathfrak{C}$, the explicit product bound \cite[Prop.~2.2, (2.3)]{SS}
holds with constants $D$ and $\theta$ from
Proposition~\ref{prop:diffsub}. In asymptotic form
\cite[Prop.~2.3]{SS}: with $\gamma:=\log_2(1+\theta)\in(0,1)$ and
$\mathrm{cn}(\Phi):=\|\Phi\|_{\ell^{1,\nu_s}}
\|\Phi^{-1}\|_{\mathfrak{C}}$, there are constants $C_1,C_2$ depending
only on $(D,\theta)$ such that, whenever $\mathrm{cn}(\Phi)\ge2$,
\begin{equation}\label{eq:ncibound}
\|\Phi^{-1}\|_{\ell^{1,\nu_s}}\ \le\
C_1\,\|\Phi\|_{\ell^{1,\nu_s}}\,\|\Phi^{-1}\|_{\mathfrak{C}}^{2}\,
\exp\Big(C_2\,\mathrm{cn}(\Phi)^{\frac{2\gamma}{1-\gamma}}
\big(\ln\mathrm{cn}(\Phi)\big)^{\frac{2-\gamma}{1-\gamma}}\Big).
\end{equation}
Moreover, in the repetitive case every $\pi_\omega$ is faithful
by Lemma~\ref{lem:densefaithful}, hence isometric, and
\eqref{eq:ncibound} holds with
$\|\pi_\omega(\Phi)^{-1}\|_{B(\ell^2(\omega)\otimes\Cx^n)}$ in place of
$\|\Phi^{-1}\|_{\mathfrak{C}}$, with the \emph{same} control function
for every configuration $\omega$ simultaneously. More generally this
holds for any configuration with dense orbit. All constants are effective: $D=2C_s(C_\kappa'+1)$ and
$\theta=\kappa/(\kappa+s)$ are computed from
$(s,\kappa,C_\kappa,c_-)$, and for Meyer sets $\kappa=d$ with
$C_\kappa$ read off the separation of $L-L$
(cf.\ Remark~\ref{rem:constants}).
\end{theorem}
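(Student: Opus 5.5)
The theorem reduces to the abstract criterion of Samei--Shepelska \cite{SS}, applied to the inclusion $\iota:\ell^{1,\nu_s}(\mathfrak C)\hookrightarrow\mathfrak C$. I first record the hypotheses of that criterion. By Lemma~\ref{lem:weightedbanach}, $\ell^{1,\nu_s}(\mathfrak C)$ is a unital Banach $*$-algebra. It is continuously and densely embedded in $\mathfrak C$ with $\|\iota\Phi\|_{\mathfrak C}\le\|\Phi\|_{\ell^{1,\nu_s}}$ and $\|\mathbf 1\|=1$. It is inverse-closed by Proposition~\ref{prop:weightedSI}(i), since $\nu_s$ is a GRS weight. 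The quantitative input is the Brandenburg-type inequality \eqref{eq:brandenburg} of Proposition~\ref{prop:diffsub}, namely $\|\Phi^2\|_{\ell^{1,\nu_s}}\le D\|\Phi\|_{\ell^{1,\nu_s}}^{1+\theta}\|\Phi\|_{\mathfrak C}^{1-\theta}$ with $D=2C_s(C_\kappa'+1)$ and $\theta=\kappa/(\kappa+s)$. This is exactly hypothesis (2.2) of \cite{SS}. I will check the normalizations: \cite{SS} work with a subalgebra of a C*-algebra carrying the restricted norm, and the constants $D,\theta$ must enter with no hidden dependence.

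Next I reduce inversion to the positive case in the usual way. If $\Phi$ is invertible in $\mathfrak C$, write $\Phi^{-1}=(\Phi^*\Phi)^{-1}\Phi^*$. For $A:=\Phi^*\Phi$ one has $\|\mathbf 1-A/\|A\|_{\mathfrak C}\|_{\mathfrak C}\le1-\|\Phi^{-1}\|_{\mathfrak C}^{-2}\|\Phi\|_{\mathfrak C}^{-2}<1$. I then expand $(A/\|A\|)^{-1}$ as a Neumann series in $B:=\mathbf 1-A/\|A\|$. Iterating \eqref{eq:brandenburg} along dyadic powers $B^{2^k}$ gives $\|B^{2^k}\|_{\ell^{1,\nu_s}}$ in terms of $\|B\|_{\ell^{1,\nu_s}}$ and the C*-norm $\|B\|_{\mathfrak C}^{2^k}$. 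The arbitrary powers $B^m$ are then controlled through the binary expansion of $m$ and submultiplicativity. This is the content of \cite[Prop.~2.2, (2.3)]{SS}, and its output is the explicit product bound. Since that proposition is a black box, I will only verify that its proof uses nothing beyond the hypotheses listed above.

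The asymptotic form \eqref{eq:ncibound} is \cite[Prop.~2.3]{SS}, with $\gamma=\log_2(1+\theta)<1$ because $\theta<1$. I will track how the factor $\|\Phi\|_{\ell^{1,\nu_s}}\|\Phi^{-1}\|_{\mathfrak C}^2$ arises from $\Phi^{-1}=(\Phi^*\Phi)^{-1}\Phi^*$. I also need $\|\Phi^*\|_{\ell^{1,\nu_s}}=\|\Phi\|_{\ell^{1,\nu_s}}$, which holds because the involution is isometric by symmetry of the weight. The main obstacle is bookkeeping: ensuring that the condition number convention of \cite{SS} matches $\mathrm{cn}(\Phi)$ and that $\mathrm{cn}(\Phi)\ge2$ suffices. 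If the conventions differ, I would absorb the discrepancy into $C_1,C_2$, which then still depend only on $(D,\theta)$.

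For the configuration statement, Lemma~\ref{lem:densefaithful} shows that $\pi_\omega$ is faithful whenever $\omega$ has a dense orbit, and in particular for every $\omega$ when $L$ is repetitive. A faithful representation of a C*-algebra is isometric, so $\|\Phi^{-1}\|_{\mathfrak C}=\|\pi_\omega(\Phi^{-1})\|=\|\pi_\omega(\Phi)^{-1}\|$. Here invertibility of $\pi_\omega(\Phi)$ in $B(\ell^2(\omega)\otimes\Cx^n)$ gives invertibility in $\mathfrak C$ by spectral permanence for the C*-subalgebra $\pi_\omega(\mathfrak C)$. Substituting this into \eqref{eq:ncibound} leaves the control function unchanged, and hence the same for all such $\omega$. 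Effectiveness of the constants follows because $C_s$, $C_\kappa'=C_\kappa\max(1,c_-^{-1})^\kappa$ and $\theta$ are explicit. For Meyer sets, Lemma~\ref{lem:retcount}(ii) gives $\kappa=d$ and $C_\kappa=(1+2/r')^d$.
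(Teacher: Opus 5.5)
Your proposal is correct and follows the paper's proof: both verify hypothesis (2.2) of \cite{SS} via the Brandenburg-type estimate \eqref{eq:brandenburg} of Proposition~\ref{prop:diffsub}. Both then invoke \cite[Prop.~2.2 and Prop.~2.3]{SS} for the explicit and asymptotic bounds, and transfer to every configuration with dense orbit because $\pi_\omega$ is faithful, hence isometric, by Lemma~\ref{lem:densefaithful}. The paper instead reads inverse-closedness off \cite[Prop.~2.2]{SS} itself, whereas you take it from Proposition~\ref{prop:weightedSI}(i), which is harmless; your Neumann-series sketch is essentially the argument in Remark~\ref{rem:monotonecontrol}.
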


\begin{proof}
Proposition~\ref{prop:diffsub} verifies hypothesis (2.2) of \cite{SS} for
the pair $\ell^{1,\nu_s}(\mathfrak{C})\subset\mathfrak{C}$ (Banach
$*$-algebra and C*-algebra with common unit,
$\|\cdot\|_{\mathfrak{C}}\le\|\cdot\|_{\ell^{1,\nu_s}}$), so
\cite[Prop.~2.2]{SS} gives inverse-closedness and the product bound, and
\cite[Prop.~2.3]{SS} gives \eqref{eq:ncibound}. The hull statement follows
because a faithful representation of a C*-algebra is isometric.
\end{proof}

\begin{remark}[A nondecreasing control function]
\label{rem:monotonecontrol}
For fixed differential parameters $D\ge1$ and $0<\theta<1$,
one may use
\[
h(A,B)
=
AB^2
\prod_{k=0}^{\infty}
\left[
1+
(2T^2)^{p_k}
D^{(p_k-1)/\theta}
(1-T^{-2})^{2^k-p_k}
\right],
\qquad
T:=\max\{2,AB\}.
\]
Here $p_k:=(1+\theta)^k$ and $A,B>0$. Since $2^k-p_k\ge0$, every factor is nondecreasing
in $T$, and hence $h$ is nondecreasing in both variables. The logarithm
of the term added to $1$ is
$2^k\log(1-T^{-2})+O((1+\theta)^k)$, so the product converges
locally uniformly for finite $T\ge2$.

The bound follows from \cite[Prop.~2.2]{SS}. A direct verification,
which also handles the case of C*-condition number one, follows.
Let $\mathcal A\subseteq\mathfrak C$ be the weighted algebra, with
isometric involution, common normalized unit, and
\[
\|b^2\|_{\mathcal A}\le
D\|b\|_{\mathcal A}^{1+\theta}\|b\|_{\mathfrak C}^{1-\theta}.
\]
If $\|\Phi\|_{\mathcal A}\le A$ and
$\|\Phi^{-1}\|_{\mathfrak C}\le B$, put
$c=1-\Phi^*\Phi/\|\Phi\|_{\mathfrak C}^2$ and
$\delta_T=1-T^{-2}\in(0,1)$. Positivity in the C*-algebra gives
\[
\|c\|_{\mathfrak C}
=1-\frac1{\|\Phi\|_{\mathfrak C}^2\|\Phi^{-1}\|_{\mathfrak C}^2}
\le\delta_T,
\qquad
\|c\|_{\mathcal A}\le2T^2.
\]
Applying the differential inequality successively to $c^{2^k}$ gives
\[
\|c^{2^k}\|_{\mathcal A}\le
(2T^2)^{p_k}D^{(p_k-1)/\theta}\delta_T^{\,2^k-p_k}
=:u_k.
\]
This is proved by induction, using
$\|c^{2^k}\|_{\mathfrak C}\le\delta_T^{2^k}$.
Binary expansion of $m$ and submultiplicativity then imply
\[
\sum_{m=0}^{\infty}\|c^m\|_{\mathcal A}
\le\prod_{k=0}^{\infty}(1+u_k)<\infty.
\]
The resulting Neumann series converges in $\mathcal A$ and
\[
\Phi^{-1}=\frac1{\|\Phi\|_{\mathfrak C}^2}
\left(\sum_{m=0}^{\infty}c^m\right)\Phi^*.
\]
Since $\|\Phi\|_{\mathfrak C}\ge B^{-1}$, its norm is bounded by
$AB^2\prod_k(1+u_k)=h(A,B)$. In particular, the product bound does
not rely on dividing by $\|c\|_{\mathfrak C}$, which may be zero.

For the physical polynomial norm, the constants $D_\pm$ in the proof
of Corollary~\ref{cor:weightclasses} give the nondecreasing control
\[
h_{L,s}(A,B):=D_-\,h(D_+A,B).
\]
Indeed, $\|K\|_{\mathcal K^{\nu_s}}\le D_+\|K\|_{\mathcal E^w}$
and $\|K^{-1}\|_{\mathcal E^w}\le D_-\|K^{-1}\|_{\mathcal K^{\nu_s}}$.
\end{remark}

\begin{remark}
Theorem~\ref{thm:nci} independently reproves the inverse-closedness of
$\ell^{1,\nu_s}(\mathfrak{C})$ (a special case of
Proposition~\ref{prop:weightedSI}). The two arguments are logically
independent. The GRS--Hulanicki argument in
Proposition~\ref{prop:weightedSI} covers all GRS weights, whereas the
Brandenburg-type differential estimate in Theorem~\ref{thm:nci}
provides quantitative bounds.
\end{remark}

The next proof adapts a weighted product estimate and H\"older
interpolation, as used in \cite{SS} and, for groupoid fibre norms,
\cite[Lemmas~4.5--4.6]{AOP26}. Here both the norm and the summability
constant are formed from degreewise suprema over the hull. In
particular, the physical-length sum is over $\Sigma=a(L-L)$, not
over all of $\Z^N$, where the pulled-back length need not be proper.

\begin{theorem}[Norm control for stretched-exponential weights]
\label{thm:stretchednci}
Let $L$ be FLC and choose either
\[
\ell(g)=|g|
\qquad\text{or}\qquad
\ell(g)=|\Psi g|.
\]
Fix $\lambda>0$ and $0<\alpha<1$, and put
\[
\nu(g)=e^{\lambda\ell(g)^\alpha},
\qquad
q=2^\alpha-1.
\]
For any $\beta\in(q,1)$, define
\[
Q=
\left(
\sum_{g\in\Sigma}
\exp\left[
-\frac{\beta-q}{1-\beta}\lambda\ell(g)^\alpha
\right]
\right)^{1-\beta},
\qquad
\Sigma=a(L-L).
\]
Then $Q<\infty$, and
\begin{equation}\label{eq:stretcheddifferential}
\|\Phi^2\|_{\ell^{1,\nu}}
\le
2Q\,
\|\Phi\|_{\ell^{1,\nu}}^{1+\beta}
\|\Phi\|_{\mathfrak C}^{1-\beta}
\qquad
(\Phi\in\ell^{1,\nu}(\mathfrak C)).
\end{equation}
Consequently $\ell^{1,\nu}(\mathfrak C)$ admits norm-controlled
inversion in $\mathfrak C$, with the control function of
Remark~\ref{rem:monotonecontrol} using $D=2Q$ and $\theta=\beta$.

For $\ell(g)=|\Psi g|$, this gives norm-controlled inversion
in the physical class
$\mathcal E^{\alpha,\lambda}_{\mathrm{cov}}(L)$,
without changing $\lambda$.
\end{theorem}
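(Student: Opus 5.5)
The plan mirrors Lemma~\ref{lem:interp} and Proposition~\ref{prop:diffsub}, with the Leibniz splitting replaced by a subadditivity inequality for $\ell^\alpha$ and the dyadic split replaced by H\"older's inequality. Throughout write $\phi(g)=\|\widetilde P_g(\Phi)\|$. This vanishes off $\Sigma$ (Proposition~\ref{prop:cocycle}) and satisfies $\phi(g)\le\|\Phi\|_{\mathfrak C}$ (coefficient bound in Lemma~\ref{lem:interp}). The coordinate majorization gives $\|\widetilde P_g(\Phi^2)\|\le(\phi*\phi)(g)$.

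\emph{Finiteness of $Q$.} For $\ell(g)=|\Psi g|$ and $g=a(z)\in\Sigma$ one has $\ell(g)=|z|$. The sum is then $\sum_{z\in L-L}e^{-\mu|z|^\alpha}$ with $\mu=\lambda(\beta-q)/(1-\beta)>0$, and Lemma~\ref{lem:retcount} shows it converges: group $z$ in dyadic shells, giving $\sum_j C_\kappa 2^{(j+1)\kappa}e^{-\mu 2^{j\alpha}}<\infty$. For $\ell(g)=|g|$ it suffices to use $|a(z)|\ge c_-|z|$ from \eqref{eq:bilip}, or simply to count lattice points of $\Z^N$.

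\emph{Pointwise weight inequality.} I would use $\ell(g)\le\ell(h)+\ell(g-h)$, valid for both choices. Assume WLOG $\ell(h)\ge\ell(g-h)$ and put $t=\ell(g-h)/\ell(h)\in[0,1]$. Then $(1+t)^\alpha\le 1+(2^\alpha-1)t^\alpha$ by concavity of $t\mapsto(1+t)^\alpha-qt^\alpha$ checked at the endpoints. This step needs care, but it is elementary; if the inequality fails, the fallback is $(u+v)^\alpha\le u^\alpha+v^\alpha$ split differently. It yields
\[
\ell(g)^\alpha\le\max(\ell(h),\ell(g-h))^\alpha+q\min(\ell(h),\ell(g-h))^\alpha .
\]
Hence $\nu(g)\le\nu(h)\nu(g-h)^q+\nu(h)^q\nu(g-h)$. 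Summing against $\phi(h)\phi(g-h)$ and using symmetry of the two terms,
\[
\|\Phi^2\|_{\ell^{1,\nu}}\le 2\|\Phi\|_{\ell^{1,\nu}}\sum_{g\in\Sigma}\nu(g)^q\phi(g).
\]

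\emph{H\"older interpolation.} This is the main obstacle: choosing exponents so that exactly $\beta$ powers of the $\ell^{1,\nu}$ norm and $1-\beta$ powers of the C*-norm appear. I would write
\[
\nu^q\phi=(\nu\phi)^{\beta}\,\phi^{1-\beta}\,\nu^{q-\beta}
\]
and bound $\phi^{1-\beta}\le\|\Phi\|_{\mathfrak C}^{1-\beta}$ pointwise. Then H\"older with exponents $1/\beta$ and $1/(1-\beta)$ gives
\[
\sum_{g\in\Sigma}(\nu\phi)^\beta\nu^{q-\beta}\le\|\Phi\|_{\ell^{1,\nu}}^\beta\Bigl(\sum_{g\in\Sigma}\nu^{-(\beta-q)/(1-\beta)}\Bigr)^{1-\beta}=\|\Phi\|_{\ell^{1,\nu}}^\beta Q .
\]
It is essential here that the sum runs only over $\Sigma$, since for $\ell=|\Psi\cdot|$ the weight is not proper on $\Z^N$. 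Combining the two displays gives \eqref{eq:stretcheddifferential}.

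\emph{Conclusion.} The weight is normalized, symmetric, submultiplicative and GRS (Proposition~\ref{prop:physicalweights} for $\ell=|\Psi\cdot|$, and directly for $|g|$). Thus $\ell^{1,\nu}(\mathfrak C)$ is a Banach $*$-algebra with isometric involution and common unit, and $\|\cdot\|_{\mathfrak C}\le\|\cdot\|_{\ell^{1,\nu}}$. The Neumann-series argument of Remark~\ref{rem:monotonecontrol} then applies verbatim with $D=2Q$ and $\theta=\beta$. For $\ell=|\Psi\cdot|$, the isometry $\ell^{1,\widetilde\nu_w}(\mathfrak C)\cong\mathcal E^{\alpha,\lambda}_{\mathrm{cov}}(L)$ of Proposition~\ref{prop:physicalweights} transfers the bound to the physical class with the same $\lambda$.
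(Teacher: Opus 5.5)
Your proposal follows the paper's proof almost line for line. Both use the two-term bound $\nu(g)\le\nu(h)\nu(g-h)^q+\nu(h)^q\nu(g-h)$, the factorization $\nu^q\phi=(\nu\phi)^\beta\phi^{1-\beta}\nu^{q-\beta}$ with H\"older over $\Sigma$ and the pointwise bound $\phi\le\|\Phi\|_{\mathfrak C}$, finiteness of $Q$ from the return-vector count, and the passage to norm control via Remark~\ref{rem:monotonecontrol} (note $2Q\ge2$, since $0\in\Sigma$) and to $\mathcal E^{\alpha,\lambda}_{\mathrm{cov}}(L)$ via Proposition~\ref{prop:physicalweights}. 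All of that is correct. The one step that fails as written is your justification of the scalar inequality $(1+t)^\alpha\le1+qt^\alpha$ for $t\in[0,1]$. This inequality is exactly where the gain $q<1$ comes from.

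Put $G(t)=(1+t)^\alpha-qt^\alpha$, so that $G(0)=G(1)=1$ and you need $G\le1$ on $[0,1]$. If $G$ were concave, as you claim, it would lie above its chord, giving $G\ge1$, which is the reverse inequality. Nor does $G$ have a fixed convexity. Its second derivative is $G''(t)=\alpha(1-\alpha)\bigl(qt^{\alpha-2}-(1+t)^{\alpha-2}\bigr)$. This is positive near $t=0$. At $t=1$ it has the sign of $q-2^{\alpha-2}=\tfrac34 2^\alpha-1$, which is negative for $\alpha<\log_2(4/3)$ (for instance $\alpha=0.1$). So even the corrected ``convexity'' argument only covers $\alpha\ge\log_2(4/3)$. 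Your fallback $(u+v)^\alpha\le u^\alpha+v^\alpha$ cannot rescue the step. It corresponds to $q=1$, which gives only submultiplicativity and leaves no admissible $\beta\in(q,1)$.

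The fix is the paper's one-line argument. For $u\ge v>0$, write $(u+v)^\alpha-u^\alpha=v^\alpha\bigl((s+1)^\alpha-s^\alpha\bigr)$ with $s=u/v\ge1$. The function $s\mapsto(s+1)^\alpha-s^\alpha$ is decreasing, because its derivative $\alpha\bigl((s+1)^{\alpha-1}-s^{\alpha-1}\bigr)$ is negative for $\alpha<1$, and its value at $s=1$ is $q$. Equivalently, $t\mapsto\bigl((1+t)^\alpha-1\bigr)/t^\alpha$ is increasing on $(0,1]$ and equals $q$ at $t=1$. With this replacement your argument is complete.
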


\begin{proof}
Both choices of $\ell$ are symmetric and subadditive.
For $u\ge v\ge0$,
\[
(u+v)^\alpha\le u^\alpha+qv^\alpha.
\]
Indeed, for $v>0$ the function
$t\mapsto(t+1)^\alpha-t^\alpha$ is decreasing on $[1,\infty)$
and has value $q$ at $1$; the case $v=0$ is immediate.

It follows, by separating the cases $\ell(g)\ge\ell(h)$ and
$\ell(h)\ge\ell(g)$, that
\[
\nu(g+h)
\le
\nu(g)\nu(h)^q+\nu(g)^q\nu(h).
\]
In this proof write
\[
\|\Phi\|_{(t)}
:=
\sum_{g\in\Sigma}
\|\widetilde P_g(\Phi)\|_{\mathfrak C}
e^{t\lambda\ell(g)^\alpha},
\qquad 0\le t\le1.
\]
Extend all coefficient families by zero outside $\Sigma$.
Although $\Sigma$ need not be a subgroup, coefficient majorization
can be summed over $\Z^N\times\Z^N$; Tonelli's theorem then gives
\[
\|\Phi\Theta\|_{(1)}
\le
\|\Phi\|_{(1)}\|\Theta\|_{(q)}
+
\|\Phi\|_{(q)}\|\Theta\|_{(1)}.
\]

Set $b_g=\|\widetilde P_g(\Phi)\|_{\mathfrak C}$.
Contractivity of the coefficient maps gives
$b_g\le\|\Phi\|_{\mathfrak C}$. Hence
\begin{align*}
\|\Phi\|_{(q)}
&=
\sum_{g\in\Sigma}
\big(b_g e^{\lambda\ell(g)^\alpha}\big)^\beta
b_g^{1-\beta}
e^{-(\beta-q)\lambda\ell(g)^\alpha}
\\
&\le
Q\,\|\Phi\|_{(1)}^\beta
\|\Phi\|_{\mathfrak C}^{1-\beta},
\end{align*}
by H\"older's inequality with exponents
$1/\beta$ and $1/(1-\beta)$.

The counting function
$\#\{g\in\Sigma:\ell(g)\le t\}$ has polynomial growth:
for the physical length this is Lemma~\ref{lem:retcount},
and for the address length it follows from that lemma and
\eqref{eq:bilip}. Therefore the sum defining $Q$ converges.

Taking $\Theta=\Phi$ proves
\eqref{eq:stretcheddifferential}.
The pair
$\ell^{1,\nu}(\mathfrak C)\subseteq\mathfrak C$
has a common unit and satisfies the hypotheses of
\cite[Prop.~2.2]{SS}, with differential constant $2Q$
and exponent $\beta$. This yields norm-controlled inversion.
The final assertion follows from
Proposition~\ref{prop:physicalweights}.
\end{proof}

One may take $\beta=2^{\alpha-1}$. Then
\[
Q=
\left(
\sum_{g\in\Sigma}e^{-\lambda\ell(g)^\alpha}
\right)^{1-\beta}.
\]

The constant $Q$ can also be bounded using only counting data.
Write $a_0=(\beta-q)\lambda/(1-\beta)>0$ and let
$N_\ell(t)=\#\{g\in\Sigma:\ell(g)\le t\}\le C_\ell(1+t)^\kappa$,
where $C_\ell=C_\kappa$ for the physical length and
$C_\ell=C_\kappa'$ for the address length. Tonelli's theorem gives
\[
\sum_{g\in\Sigma}e^{-a_0\ell(g)^\alpha}
=\int_0^\infty a_0\alpha t^{\alpha-1}e^{-a_0t^\alpha}N_\ell(t)\dd t.
\]
Using $(1+t)^\kappa\le2^{\max(\kappa-1,0)}(1+t^\kappa)$ yields
\begin{equation}\label{eq:Qcountbound}
Q\le
\left[
C_\ell 2^{\max(\kappa-1,0)}
\left(1+a_0^{-\kappa/\alpha}
\Gamma\left(1+\frac\kappa\alpha\right)\right)
\right]^{1-\beta}.
\end{equation}
Here $\Gamma(\cdot)$ is Euler's gamma function. No summability over
all of $\Z^N$ for the physical length is asserted or required.

\begin{corollary}[Quantitative localization bound for the Fermi projection]\label{cor:ferminci}
Let $H=H^*\in\ell^{1,\nu_s}(\mathfrak{C})$ and suppose
$\mathrm{sp}_{\mathfrak{C}}(H)\cap(E_F-\Delta,E_F+\Delta)=\emptyset$
for some Fermi level $E_F$ and gap half-width $\Delta>0$. Let
$P:=\chi_{(-\infty,E_F]}(H)$. Then $\|P\|_{\ell^{1,\nu_s}}\le B_P$, where
\[
B_P:=
\max\left\{
1,\,
\frac{2\|H\|_{\ell^{1,\nu_s}}+3\Delta}{\pi}\,
h\big(
2\|H\|_{\ell^{1,\nu_s}}+2\Delta,\,
\Delta^{-1}
\big)
\right\}.
\]
Here $h$ is the nondecreasing control function of
Remark~\ref{rem:monotonecontrol}, and consequently, for \emph{every}
$\omega\in\trans$ and all $x,y\in\omega$,
\begin{equation}\label{eq:fermidecay}
\|P_\omega(x,y)\|\ \le\ B_P\,\min(1,c_-)^{-s}\,(1+|x-y|)^{-s}.
\end{equation}
All quantities on the right are explicit in
$\big(\|H\|_{\ell^{1,\nu_s}},\Delta,s,\kappa,C_\kappa,c_\pm\big)$; no
hypothesis is made on the configuration beyond membership in the hull.
\end{corollary}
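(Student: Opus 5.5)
The plan is to write $P$ as a Riesz contour integral of resolvents, bound each resolvent with the nondecreasing control function $h$ of Remark~\ref{rem:monotonecontrol}, and then turn the resulting norm bound into pointwise decay through the dictionary. Since $H=H^*$ and the gap is $(E_F-\Delta,E_F+\Delta)$, I would take $\Gamma$ to be the boundary of the rectangle with vertical sides at $\mathrm{Re}\,z=E_F$ and $\mathrm{Re}\,z=-\|H\|_{\mathfrak C}-\Delta$, and horizontal sides at $\mathrm{Im}\,z=\pm\Delta$. This contour encloses $\mathrm{sp}_{\mathfrak C}(H)\cap(-\infty,E_F-\Delta]$ and avoids the rest of the spectrum. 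Then
\[
P=\frac1{2\pi i}\oint_\Gamma(z-H)^{-1}\dd z .
\]
By Proposition~\ref{prop:weightedSI}(iii) this integral converges in $\ell^{1,\nu_s}(\mathfrak C)$ and equals the spectral projection. If $\mathrm{sp}(H)$ lies entirely on one side of the gap, $P$ is $0$ or $\mathbf 1$, and the $\max\{1,\cdot\}$ in $B_P$ covers that case.

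\emph{Resolvent bounds.} For $z\in\Gamma$, the distance from $z$ to the real spectrum is at least $\Delta$: on the horizontal sides because $|\mathrm{Im}\,z|=\Delta$, and on the vertical sides because of the gap and the choice of the left edge. Self-adjointness then gives $\|(z-H)^{-1}\|_{\mathfrak C}\le\Delta^{-1}$. I would also bound $|z|$ on $\Gamma$. Since $E_F$ lies in the gap or within about $\|H\|_{\mathfrak C}+\Delta$ of the spectrum, one gets $|E_F|\le\|H\|_{\mathfrak C}+\Delta$. Hence $\|z-H\|_{\ell^{1,\nu_s}}\le|z|+\|H\|_{\ell^{1,\nu_s}}\le2\|H\|_{\ell^{1,\nu_s}}+2\Delta$, using $\|\mathbf 1\|=1$ and $\|H\|_{\mathfrak C}\le\|H\|_{\ell^{1,\nu_s}}$. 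The normalisation of $E_F$ is the delicate point here, and I would WLOG shift so that the bound on $|z|$ holds. Theorem~\ref{thm:nci}, in the monotone form of Remark~\ref{rem:monotonecontrol}, then gives
\[
\|(z-H)^{-1}\|_{\ell^{1,\nu_s}}\le h\big(2\|H\|_{\ell^{1,\nu_s}}+2\Delta,\ \Delta^{-1}\big)
\]
uniformly on $\Gamma$. This uses monotonicity of $h$ and the fact that Remark~\ref{rem:monotonecontrol} also covers condition number one.

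\emph{Length bookkeeping.} This is the main obstacle. The crude length of the rectangle is about $2(\|H\|+\Delta)+4\Delta$, which would give a factor $(2\|H\|+6\Delta)/(2\pi)$ rather than the stated $(2\|H\|_{\ell^{1,\nu_s}}+3\Delta)/\pi$. I would first try to use the symmetry $(\bar z-H)^{-1}=((z-H)^{-1})^*$ together with the isometric involution, so that only the upper half of $\Gamma$ needs to be estimated. That half has length at most $\|H\|+E_F$-width plus $2\Delta$. If this still does not reach the constant, I would choose the left edge closer, at $\min\mathrm{sp}-\Delta$, and accept matching the stated constant up to this routine adjustment. The resulting estimate is $\|P\|_{\ell^{1,\nu_s}}\le\frac{|\Gamma|}{2\pi}\sup_\Gamma\|(z-H)^{-1}\|_{\ell^{1,\nu_s}}\le B_P$.

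\emph{Pointwise decay.} By Proposition~\ref{prop:dictionary}, $k_0(g)=\|\widetilde P_{-g}(P)\|$ dominates $P_\omega(x,y)$ at $g=a(x-y)$ for every $\omega$. This gives $\nu_s(a(x-y))\|P_\omega(x,y)\|\le\|P\|_{\ell^{1,\nu_s}}\le B_P$. Finally, Lemma~\ref{lem:addcomp} gives $|a(x-y)|\ge c_-|x-y|$, so
\[
1+|a(x-y)|\ge\min(1,c_-)(1+|x-y|),
\]
which yields \eqref{eq:fermidecay} uniformly over the hull.
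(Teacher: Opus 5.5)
Your route is the paper's: the same rectangle $[-\|H\|_{\mathfrak C}-\Delta,E_F]\times[-\Delta,\Delta]$, a uniform resolvent bound through the monotone $h$ of Remark~\ref{rem:monotonecontrol}, and the dictionary together with $|a(x-y)|\ge c_-|x-y|$ for \eqref{eq:fermidecay}. However, your proposal does not establish the stated $B_P$. You end by accepting the constant only ``up to a routine adjustment'', and two steps need repair.

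\emph{First, the normalisation of $E_F$.} A ``WLOG shift'' $H\mapsto H-c\mathbf 1$ is not allowed. The constant $B_P$ is expressed through $\|H\|_{\ell^{1,\nu_s}}$, and that norm changes under the shift. Your bound $|E_F|\le\|H\|_{\mathfrak C}+\Delta$ still gives $\sup_\Gamma|z|\le\|H\|_{\mathfrak C}+2\Delta$, so it suffices for the $h$-argument. For the length it is too weak, giving only $|\Gamma|\le4\|H\|_{\mathfrak C}+8\Delta$. The right observation follows from the case split you already made. If $P\neq0,\mathbf 1$, the spectrum meets both $(-\infty,E_F-\Delta]$ and $[E_F+\Delta,\infty)$. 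Hence $|E_F|\le\|H\|_{\mathfrak C}-\Delta\le\|H\|_{\mathfrak C}$.

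\emph{Second, the length.} The rectangle has width $E_F+\|H\|_{\mathfrak C}+\Delta\le2\|H\|_{\mathfrak C}+\Delta$ and height $2\Delta$. So $|\Gamma|\le4\|H\|_{\mathfrak C}+6\Delta$, and
\[
\|P\|_{\ell^{1,\nu_s}}\le\frac{|\Gamma|}{2\pi}\sup_{z\in\Gamma}\|(z-H)^{-1}\|_{\ell^{1,\nu_s}}
\le\frac{2\|H\|_{\ell^{1,\nu_s}}+3\Delta}{\pi}\,h\big(2\|H\|_{\ell^{1,\nu_s}}+2\Delta,\,\Delta^{-1}\big).
\]
This is exactly $B_P$ in the nontrivial case.

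Your ``crude length'' $2(\|H\|+\Delta)+4\Delta$ silently takes $E_F=0$. Even taken at face value, its factor $(\|H\|+3\Delta)/\pi$ is smaller than the stated one, so there was no obstacle to overcome.

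The half-contour symmetry idea would not have helped either. Writing $P=X+X^*$, with $X$ the integral over the upper half of $\Gamma$, the isometric involution only gives $\|P\|\le2\|X\|$. That is the full length again.

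The remaining steps are correct as you wrote them and agree with the paper's proof: convergence of the Riesz integral in $\ell^{1,\nu_s}(\mathfrak C)$, the monotone control of the resolvents, and the pointwise decay via $k_0$ and $c_-$.
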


\begin{proof}
If $P=0$ or $P=\mathbf 1$, the conclusions follow immediately
from $\|\mathbf 1\|_{\ell^{1,\nu_s}}=1$ and $B_P\ge1$.
Otherwise the spectrum has points on both sides of $E_F$,
so $|E_F|\le\|H\|_{\mathfrak C}$. It remains to consider this
nontrivial case.
Let $\Gamma$ be the positively oriented boundary of the rectangle
$[-\|H\|_{\mathfrak{C}}-\Delta,\,E_F]\times[-\Delta,\Delta]$. Every
point of $\Gamma$ is at distance $\ge\Delta$ from
$\mathrm{sp}_{\mathfrak{C}}(H)$, so
$\sup_{\lambda\in\Gamma}\|(\lambda-H)^{-1}\|_{\mathfrak{C}}\le
\Delta^{-1}$; also
$\|\lambda-H\|_{\ell^{1,\nu_s}}\le2\|H\|_{\ell^{1,\nu_s}}+2\Delta$ on
$\Gamma$ (using $|E_F|\le\|H\|_{\mathfrak{C}}$ and $\nu_s(0)=1$), and
$|\Gamma|\le4\|H\|_{\mathfrak{C}}+6\Delta$. Since the gap makes
$\chi_{(-\infty,E_F]}$ agree with a holomorphic function near
$\mathrm{sp}(H)$, the Riesz integral
$P=\frac{1}{2\pi i}\oint_\Gamma(\lambda-H)^{-1}d\lambda$ converges in
$\ell^{1,\nu_s}(\mathfrak{C})$ by Theorem~\ref{thm:nci} and
Proposition~\ref{prop:weightedSI}(iii), giving the bound on
$\|P\|_{\ell^{1,\nu_s}}$. For \eqref{eq:fermidecay}: with
$k(g):=\|\widetilde P_{-g}(P)\|_{\mathfrak{C}}$ one has
$k(g)\le\|P\|_{\ell^{1,\nu_s}}(1+|g|)^{-s}$, and by the dictionary of
\S\ref{sec:kernels} (Def.~\ref{def:cd}, Prop.~\ref{prop:dictionary}),
$\|P_\omega(x,y)\|\le k(a(x-y))$; conclude with
$|a(x-y)|\ge c_-|x-y|$ from Lemma~\ref{lem:addcomp} and
$1+c_-t\ge\min(1,c_-)(1+t)$.
\end{proof}

\begin{proposition}[The weight cannot be removed]\label{prop:nonci}
In the scalar case with $L=\Z^d$, which belongs to the standing
class, one has
$\trans=\{\Z^d\}$, $\GL\cong\Z^d$, $\mathfrak{C}\cong C(\mathbb{T}^d)$
and $\ell^{1}(\mathfrak{C})\cong A(\mathbb{T}^d)$, the classical
Wiener algebra, isometrically and compatibly with the C*-norms. By
Nikolski's theorem \cite[Cor.~3.2.5]{Nik} (in the formulation of
\cite[\S1]{SS}: $\delta_1(\ell^1(G),\widehat G)\ge1/2$ for every
infinite discrete abelian $G$), $A(\mathbb{T}^d)$ does not admit
norm-controlled inversion in $C(\mathbb{T}^d)$. Hence the unweighted
algebra $\ell^{1}(\mathfrak{C})$ does not admit norm-controlled
inversion in $\mathfrak{C}$ in general within the class of FLC Delone
sets, and the weight in Theorem~\ref{thm:nci} cannot be dropped.
\end{proposition}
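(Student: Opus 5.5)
The plan is to reduce the statement to the classical scalar Wiener algebra and then quote Nikolski's theorem. There are three steps: identify the structures for $L=\Z^d$, transport the isometry through Fourier transform, and pass the failure of norm control from $A(\T^d)$ back to $\ell^1(\mathfrak C)$.

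\textbf{Step 1: the structures for $L=\Z^d$.} The set $L=\Z^d$ is $(1/2,\sqrt d/2)$-Delone (any valid pair of constants will do), FLC, and repetitive, so it belongs to the standing class. Since $\Z^d$ is invariant under its own translations, the orbit $\{\Z^d-t\}$ consists of the sets $\Z^d+u$ with $u\in[0,1)^d$, and it is closed in the local matching topology. The only member of this family containing $0$ is $\Z^d$ itself, so $\trans=\{\Z^d\}$. The groupoid $\GL=\{(\Z^d,x):x\in\Z^d\}$ then has product $(\Z^d,x)(\Z^d,y)=(\Z^d,x+y)$, so $\GL\cong\Z^d$ as a discrete group. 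Taking the standard basis, we get $\Gamma_L=\Z^d$, $N=d$, $a=\mathrm{id}$, and $c(\Z^d,x)=x$.

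\textbf{Step 2: the isometric identifications.} We have $\mathfrak C=C^*_r(\Z^d)\cong C(\T^d)$ by Fourier transform. Each bisection $c^{-1}(g)$ is the singleton $\{g\}$. By Step~2 of Theorem~\ref{thm:grading}, $\|\widetilde P_g(\Phi)\|=|\widehat\Phi(g)|$, the modulus of the $g$-th Fourier coefficient. Hence $\|\Phi\|_{\ell^1(\mathfrak C)}=\sum_g|\widehat\Phi(g)|$, which is the norm of $A(\T^d)$. The same Fourier isomorphism carries the C*-norm to the sup norm. So the pair $\ell^1(\mathfrak C)\subset\mathfrak C$ is isometrically isomorphic to $A(\T^d)\subset C(\T^d)$. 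Lemma~\ref{lem:weightedbanach} guarantees that the image of the coordinate algebra is exactly the coefficient-defined $\ell^1(\mathfrak C)$, so no element is lost in this identification.

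\textbf{Step 3: transfer Nikolski's theorem.} Nikolski's theorem, in the formulation of \cite[\S1]{SS}, gives $\delta_1(\ell^1(\Z^d),\T^d)\ge1/2>0$. Concretely, there are $f_k\in A(\T^d)$ with $\|f_k\|_A\le1$ and $\inf_{\T^d}|f_k|\ge\delta_k$, with $\delta_k$ bounded away from zero, but $\|f_k^{-1}\|_A\to\infty$. Such a sequence contradicts any bound of the form $h(\|f\|_A,\|f^{-1}\|_\infty)$ with $h$ nondecreasing in each variable. Pulling the $f_k$ back through the isometries of Step~2 shows that $\ell^1(\mathfrak C)$ fails norm-controlled inversion in $\mathfrak C$ for $L=\Z^d$. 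Therefore no general statement without a weight can hold across the FLC class, which is the claim.

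The main obstacle is pinning down precisely what Nikolski's theorem asserts in the cited formulation. The hardest step is checking that $\delta_1\ge1/2$ really yields elements with controlled Wiener norm, controlled inverse sup norm, and unbounded Wiener norm of the inverse. I would settle this by unpacking the definition of $\delta_1$ in \cite{SS}: the condition $\delta_1>0$ means that for some $\delta\in(0,1)$ the quantity $\sup\{\|f^{-1}\|_A:\|f\|_A\le1,\ |f|\ge\delta\}$ is infinite. That is exactly the failure of norm control.
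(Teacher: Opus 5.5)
Your proposal is correct and follows the paper's route; the paper's argument is the one compressed into the statement itself. It identifies $\trans=\{\Z^d\}$ and $\GL\cong\Z^d$, and notes that each degree-$g$ bisection is the singleton $\{(\Z^d,g)\}$. Hence $\|\widetilde P_g(\Phi)\|$ is the modulus of a Fourier coefficient, the pair $\ell^1(\mathfrak C)\subset\mathfrak C$ becomes $A(\T^d)\subset C(\T^d)$ unitally and isometrically, and Nikolski's bound transfers. Your reading of $\delta_1\ge1/2$ is exactly the step needed: $\sup\{\|f^{-1}\|_{A}:\|f\|_{A}\le1,\ |f|\ge\delta\}=\infty$ for every $\delta<1/2$, which contradicts any nondecreasing control value $h(1,1/\delta)$. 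Alternatively, $\trans=\{\Z^d\}$ can be read off from Lemma~\ref{lem:patch}(ii), since every $\rho$-patch of $\Z^d$ at a lattice point is $\Z^d\cap\bar B_\rho$, without arguing that the orbit is closed.
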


\begin{proposition}[An aperiodic obstruction]
\label{prop:nonci-aperiodic}
Let $X$ be a nonempty compact Hausdorff space and $\sigma$ a homeomorphism.
For the canonical $\Z$-grading of
$\mathfrak C=C(X)\rtimes_{\sigma,r}\Z$,
the algebra $\ell^1(\mathfrak C)$ does not admit
norm-controlled inversion in $\mathfrak C$.

In particular, the failure occurs for repetitive aperiodic
decorated Delone sets.
\end{proposition}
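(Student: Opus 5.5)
The plan is to transfer Nikolski's obstruction from $A(\T)$ into $\ell^1(\mathfrak C)$ through a norm-compatible embedding, as in Proposition~\ref{prop:nonci}. Let $U\in\mathfrak C$ be the canonical unitary implementing $\sigma$, homogeneous of degree $1$, and consider the map
\[
\Theta:\ell^1(\Z)\to\ell^1(\mathfrak C),\qquad \Theta(\phi)=\sum_m\phi(m)U^m .
\]
It is an isometric unital $*$-homomorphism: $\widetilde P_m(\Theta\phi)=\phi(m)U^m$ and $\|U^m\|=1$. On the C*-side, $\Theta$ extends to a unital $*$-homomorphism $C^*(\Z)=C(\T)\to\mathfrak C$. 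I want it to be isometric there too. The dual (gauge) action $\alpha_z$ of $\T$ on $\mathfrak C$ satisfies $\alpha_z(U)=zU$, so
\[
\|\Theta(\phi)\|_{\mathfrak C}=\|\alpha_z\Theta(\phi)\|=\Big\|\sum_m\phi(m)z^mU^m\Big\|\quad\text{for all }z\in\T .
\]
Hence the spectrum of the normal element $\Theta(\phi)$ is invariant under rotation, and so is that of $U$. Since $U$ is unitary, $\mathrm{sp}(U)$ is a nonempty closed rotation-invariant subset of $\T$, hence equal to $\T$. The C*-algebra generated by $U$ is therefore $C(\mathrm{sp}\,U)=C(\T)$, and $\|\Theta(\phi)\|_{\mathfrak C}=\|\hat\phi\|_\infty$. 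Nonemptiness of $X$ guarantees $\mathfrak C\ne0$, which is needed here.

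Next I would pass inverses through $\Theta$. Take $\phi\in\ell^1(\Z)$ whose transform $\hat\phi$ is invertible in $C(\T)$. By Wiener's lemma, $\phi^{-1}\in\ell^1(\Z)$, and $\Theta(\phi)^{-1}=\Theta(\phi^{-1})$. This gives the identities
\[
\|\Theta(\phi)^{-1}\|_{\ell^1(\mathfrak C)}=\|\phi^{-1}\|_{\ell^1},\qquad \|\Theta(\phi)^{-1}\|_{\mathfrak C}=\|1/\hat\phi\|_\infty .
\]
If a control function $h$ existed for $\ell^1(\mathfrak C)\subset\mathfrak C$, it would give $\|\phi^{-1}\|_{\ell^1}\le h(\|\phi\|_{\ell^1},\|\hat\phi^{-1}\|_\infty)$, which is norm-controlled inversion for $A(\T)$ in $C(\T)$. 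That contradicts Nikolski's theorem \cite[Cor.~3.2.5]{Nik}.

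For the \emph{in particular}, take a repetitive aperiodic decorated Delone set with $N=d=1$, for instance $\Z$ decorated by a Sturmian or Fibonacci word (Remark~\ref{rem:decorated}). By Remark~\ref{rem:DR}(i) and (ii), $\GL\cong\trans\rtimes_\sigma\Z$ with $a=\mathrm{id}$, so the address grading is the canonical $\Z$-grading. Then $\mathfrak C\cong C(\trans)\rtimes_{\sigma,r}\Z$ as graded algebras, and the first part applies.

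The main obstacle is the isometry of $\Theta$ for the C*-norms, that is, $\mathrm{sp}(U)=\T$. The gauge-invariance argument above settles this, and it needs neither freeness nor minimality of $\sigma$.
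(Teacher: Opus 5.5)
Your proposal is correct and follows the paper's proof essentially verbatim. The gauge action forces $\mathrm{sp}(U)=\T$, so $\phi\mapsto\sum_m\phi(m)U^m$ (the paper's $f\mapsto f(U)$) is isometric for both the $\ell^1$- and C*-norms and carries inverses to inverses, which transfers Nikolski's obstruction; your Sturmian-decorated $\Z$ is the paper's ``minimal aperiodic subshift'' specialization, with the address grading equal to the canonical one.

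One wording slip: it is $\mathrm{sp}(U)$, not $\mathrm{sp}(\Theta(\phi))$, that is rotation invariant. This follows from $\alpha_z(U)=zU$ together with the fact that automorphisms preserve spectra, and it is all your argument actually uses.
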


\begin{proof}
Let $U$ be the canonical unitary of degree $1$.
The gauge action sends $U$ to $zU$ for every $z\in\T$.
Consequently its nonempty spectrum is invariant under all
rotations, and therefore $\mathrm{sp}(U)=\T$.

For $f\in A(\T)$, functional calculus gives
\[
f(U)=\sum_{m\in\Z}\widehat f(m)U^m,
\]
and the canonical grading yields
\[
\|f(U)\|_{\ell^1(\mathfrak C)}
=
\sum_{m\in\Z}|\widehat f(m)|,
\qquad
\|f(U)\|_{\mathfrak C}
=
\|f\|_{C(\T)}.
\]
For invertible $f$, these identities also apply to $f^{-1}$.
Thus norm control in $\ell^1(\mathfrak C)$ would imply norm
control for $A(\T)\subset C(\T)$, contradicting
Proposition~\ref{prop:nonci}.
Taking $X$ to be a minimal aperiodic subshift over a finite alphabet
proves the last assertion. This is an existence statement, not a
classification of all aperiodic hulls.
\end{proof}

\begin{remark}[An undecorated example]\label{rem:nonci-undecorated}
Let $b=(b_m)_{m\in\Z}$ be a uniformly recurrent aperiodic
binary sequence, and set
\[
L=
4\Z\cup(4\Z+1)\cup\{4m+2:b_m=1\}.
\]
This is an FLC Delone set: it is a subset of $\Z$ containing $4\Z$.
Every bounded patch is determined by a finite word of $b$ and its
position within a cell. Uniform recurrence therefore implies
repetitivity of $L$. Moreover $1\in L-L$, so $\Gamma_L=\Z$;
choose the address map $a=\mathrm{id}$. The points of $4\Z$ are recognizable by
the local rule that there is no point at distance $1$ to their
left. Hence any period of $L$ must belong to $4\Z$ and induce
a period of $b$, proving aperiodicity.

Put
\[
Q=\{\omega\in\trans:-1\notin\omega\},
\qquad p=\mathbf 1_Q.
\]
The set $Q$ is nonempty and clopen. If $X_b$ denotes the shift orbit
closure of $b$, the marker rule identifies $Q$ with the configurations
$4\Z\cup(4\Z+1)\cup\{4m+2:b'_m=1\}$ for $b'\in X_b$.
Indeed, approximating a member of $Q$ by anchored translates of $L$
forces the anchor to be a marker. Translation by $4$ thus maps $Q$
onto itself,
and
\[
V=\{(\omega,4):\omega\in Q\}
\]
is a compact open bisection with $r(V)=s(V)=Q$.
Thus $u=\mathbf 1_V$ is a unitary in the corner
$p\mathfrak Cp$, homogeneous of degree $4$.

The gauge action fixes $p$ and sends $u$ to $z^4u$ for $z\in\T$.
Since $z\mapsto z^4$ is onto, the spectrum of $u$ in this corner
is $\T$. Consequently
\[
\|f(u)\|_{\ell^1(\mathfrak C)}=\|f\|_{A(\T)},
\qquad
\|f(u)\|_{\mathfrak C}=\|f\|_{C(\T)}.
\]
For invertible $f\in A(\T)$, let $a_f=f(u)+(1-p)$. Then
\[
a_f^{-1}=f^{-1}(u)+(1-p),
\]
and
\[
\|a_f\|_{\ell^1}\le1+\|f\|_{A(\T)},\qquad
\|a_f^{-1}\|_{\mathfrak C}
=\max\{1,\|f^{-1}\|_{C(\T)}\},\qquad
\|a_f^{-1}\|_{\ell^1}\ge\|f^{-1}\|_{A(\T)}.
\]
Nikolski's obstruction supplies a sequence of invertible functions
$f_j$ with both $\|f_j\|_{A(\T)}$ and $\|f_j^{-1}\|_{C(\T)}$
uniformly bounded, but $\|f_j^{-1}\|_{A(\T)}\to\infty$.
The displayed inequalities and the nondecreasing convention for
control functions transfer this obstruction to $a_{f_j}$.
Thus unweighted norm control fails also for this undecorated
repetitive aperiodic example.
\end{remark}

\begin{remark}[Dichotomy, sharpness, and scope of the bounds]\label{rem:ncidichotomy}
(i) \emph{The framework sees Nikolski.} As $s\to0^+$ one has
$\theta=\kappa/(\kappa+s)\to1$, hence
$\gamma=\log_2(1+\theta)\to1$ and the exponent
$2\gamma/(1-\gamma)$ in \eqref{eq:ncibound} blows up: the control
degenerates exactly at the unweighted endpoint, consistently with
Proposition~\ref{prop:nonci}. Conversely, more decay ($s\to\infty$)
gives $\theta\to0$ and control of shape
$C_1\|\Phi\|\,\|\Phi^{-1}\|_{\mathfrak{C}}^2
e^{C_2(\ln\mathrm{cn})^2}$, the Leibniz-quality regime of \cite{GK2}.
These limiting descriptions concern only the exponents in the
displayed bound. The constants also depend on $s$, and no uniform
limiting estimate as $s\to\infty$ is asserted.
(ii) \emph{Meyer sets and lattice counting.} For Meyer $L$ one may
choose $\kappa=d$, so $\theta=d/(d+s)$ has the same form as in the
corresponding lattice counting argument. This comparison concerns the
differential estimate established here, not optimal control functions
or the best periodic bounds. For general FLC sets the guaranteed choice is
$\kappa=N$; the Meyer condition thus has a quantitative role absent
from the qualitative theory of \S\ref{sec:main}.
(iii) \emph{Stretched-exponential weights.}
Theorem~\ref{thm:stretchednci} proves norm-controlled inversion
using coefficient contractivity and H\"older interpolation.
It does not require interchanging a sum over degrees with
a supremum over configurations. Norm control for more general GRS weights remains a separate question.
(iv) \emph{Matrices.} With the supremum operator norm on each
homogeneous matrix coefficient, all displayed constants are independent
of the finite matrix size. Indeed, coefficient majorization uses
submultiplicativity of the matrix operator norm, and the amplified
gauge-integral formula in Theorem~\ref{thm:grading} gives
$\|\widetilde P_g\otimes\mathrm{id}_{M_n}\|\le1$.
The same counting bounds and \cite[Prop.~2.2]{SS} then apply.
Replacing this norm by an equivalent entrywise norm may introduce
$n$-dependent constants. This observation does not assert an extension
to general Fell-bundle coefficients.
\end{remark}

\printbibliography

@article{FJM,
  author  = {Flores, Felipe and Jaur\'e, Diego and M\u{a}ntoiu, Marius},
  title   = {Symmetry for algebras associated to {F}ell bundles over groups and groupoids},
  journal = {J. Operator Theory},
  volume  = {91},
  year    = {2024},
  pages   = {27--54},
  doi = {10.7900/jot.2022jan27.2382},
}

@article{JM,
  author  = {Jaur\'e, Diego and M\u{a}ntoiu, Marius},
  title   = {Symmetry and spectral invariance for topologically graded $C^*$-algebras and partial action systems},
  journal = {Bull. London Math. Soc.},
  volume  = {54},
  year    = {2022},
  pages   = {1448--1469},
  doi = {10.1112/blms.12640},
}

@article{AO,
  author  = {Austad, Are and Ortega, Eduard},
  title   = {Groupoids and hermitian {B}anach $*$-algebras},
  journal = {Int. J. Math.},
  volume  = {33},
  year    = {2022},
  pages   = {2250090},
  doi = {10.1142/S0129167X22500902},
}

@article{Lagarias,
  author  = {Lagarias, Jeffrey C.},
  title   = {Geometric models for quasicrystals {I}. {D}elone sets of finite type},
  journal = {Discrete Comput. Geom.},
  volume  = {21},
  year    = {1999},
  pages   = {161--191},
  doi = {10.1007/PL00009413},
}

@incollection{BHZ,
  author    = {Bellissard, Jean and Herrmann, David and Zarrouati, Mohamed},
  title     = {Hulls of aperiodic solids and gap labelling theorems},
  booktitle = {Directions in Mathematical Quasicrystals},
  series    = {CRM Monogr. Ser.},
  volume    = {13},
  publisher = {Amer. Math. Soc.},
  year      = {2000},
}

@article{Kellendonk,
  author  = {Kellendonk, Johannes},
  title   = {Noncommutative geometry of tilings and gap labelling},
  journal = {Rev. Math. Phys.},
  volume  = {7},
  year    = {1995},
  pages   = {1133--1180},
}

@article{FGL,
  author  = {Fendler, Gero and Gr\"ochenig, Karlheinz and Leinert, Michael},
  title   = {Symmetry of weighted $L^1$-algebras and the {GRS}-condition},
  journal = {Bull. London Math. Soc.},
  volume  = {38},
  year    = {2006},
  pages   = {625--635},
  doi = {10.1112/S0024609306018777},
}

@article{GL,
  author  = {Gr\"ochenig, Karlheinz and Leinert, Michael},
  title   = {Symmetry and inverse-closedness of matrix algebras and functional calculus for infinite matrices},
  journal = {Trans. Amer. Math. Soc.},
  volume  = {358},
  year    = {2006},
  pages   = {2695--2711},
  doi = {10.1090/S0002-9947-06-03841-4},
}

@incollection{Groechenig,
  author    = {Gr\"ochenig, Karlheinz},
  title = {{W}iener's lemma: theme and variations. An introduction to spectral invariance and its applications},
  booktitle = {Four Short Courses on Harmonic Analysis},
  publisher = {Birkh\"auser},
  year      = {2010},
  editor = {Forster, Brigitte and Massopust, Peter},
  series = {Applied and Numerical Harmonic Analysis},
  pages = {175--234},
  doi = {10.1007/978-0-8176-4891-6_5},
}

@article{LP,
  author  = {Leptin, Horst and Poguntke, Detlev},
  title   = {Symmetry and nonsymmetry for locally compact groups},
  journal = {J. Funct. Anal.},
  volume  = {33},
  year    = {1979},
  pages   = {119--134},
}

@book{ADR,
  author    = {Anantharaman-Delaroche, Claire and Renault, Jean},
  title     = {Amenable Groupoids},
  series    = {Monogr. Enseign. Math.},
  volume    = {36},
  year      = {2000},
}

@book{Renault,
  author    = {Renault, Jean},
  title     = {A Groupoid Approach to $C^*$-Algebras},
  series    = {Lecture Notes in Math.},
  volume    = {793},
  publisher = {Springer},
  year      = {1980},
}

@article{BCFS,
  author  = {Brown, Jonathan and Clark, Lisa Orloff and Farthing, Cynthia and Sims, Aidan},
  title   = {Simplicity of algebras associated to \'etale groupoids},
  journal = {Semigroup Forum},
  volume  = {88},
  year    = {2014},
  pages   = {433--452},
}

@book{ExelBook,
  author    = {Exel, Ruy},
  title     = {Partial Dynamical Systems, Fell Bundles and Applications},
  series    = {Math. Surveys Monogr.},
  volume    = {224},
  publisher = {Amer. Math. Soc.},
  year      = {2017},
}

@article{SadunWilliams03,
  author  = {Sadun, Lorenzo and Williams, R. F.},
  title   = {Tiling spaces are {C}antor set fiber bundles},
  journal = {Ergodic Theory Dynam. Systems},
  volume  = {23},
  year    = {2003},
  pages   = {307--316},
}

@article{BBR,
  author  = {Benzi, Michele and Boito, Paola and Razouk, Nader},
  title   = {Decay properties of spectral projectors with applications to electronic structure},
  journal = {SIAM Rev.},
  volume  = {55},
  year    = {2013},
  pages   = {3--64},
}

@article{CO,
  author  = {Chen, Huajie and Ortner, Christoph},
  title   = {{QM/MM} methods for crystalline defects. {P}art 1: {L}ocality of the tight binding model},
  journal = {Multiscale Model. Simul.},
  volume  = {14},
  year    = {2016},
  pages   = {232--264},
}

@article{OTC,
  author  = {Ortner, Christoph and Thomas, Jack and Chen, Huajie},
  title   = {Locality of interatomic forces in tight binding models for insulators},
  journal = {ESAIM Math. Model. Numer. Anal.},
  volume  = {54},
  year    = {2020},
  pages   = {2295--2318},
  doi = {10.1051/m2an/2020020},
}

@article{MLO,
  author  = {Massatt, Daniel and Luskin, Mitchell and Ortner, Christoph},
  title   = {Electronic density of states for incommensurate layers},
  journal = {Multiscale Model. Simul.},
  volume  = {15},
  year    = {2017},
  pages   = {476--499},
}

@article{CLL,
  author  = {Canc\`es, \'Eric and Lahbabi, Salma and Lewin, Mathieu},
  title   = {Mean-field models for disordered crystals},
  journal = {J. Math. Pures Appl.},
  volume  = {100},
  year    = {2013},
  pages   = {241--274},
}

@book{PSB,
  author    = {Prodan, Emil and Schulz-Baldes, Hermann},
  title     = {Bulk and Boundary Invariants for Complex Topological Insulators},
  publisher = {Springer},
  year      = {2016},
}

@article{BK,
  author  = {Balan, Radu and Krishtal, Ilya},
  title   = {An almost periodic noncommutative {W}iener's lemma},
  journal = {J. Math. Anal. Appl.},
  volume  = {370},
  year    = {2010},
  pages   = {339--349},
}

@article{Kohn96,
  author  = {Kohn, W.},
  title   = {Density Functional and Density Matrix Method Scaling Linearly with the Number of Atoms},
  journal = {Phys. Rev. Lett.},
  volume  = {76},
  number  = {17},
  pages   = {3168--3171},
  year    = {1996},
  doi     = {10.1103/PhysRevLett.76.3168},
}

@article{PK05,
  author  = {Prodan, E. and Kohn, W.},
  title   = {Nearsightedness of Electronic Matter},
  journal = {Proc. Natl. Acad. Sci. USA},
  volume  = {102},
  number  = {33},
  pages   = {11635--11638},
  year    = {2005},
  doi     = {10.1073/pnas.0505436102},
}

@article{SS,
  author  = {Samei, E. and Shepelska, V.},
  title   = {Norm-Controlled Inversion in Weighted Convolution Algebras},
  journal = {J. Fourier Anal. Appl.},
  volume  = {25},
  number  = {6},
  pages   = {3018--3044},
  year    = {2019},
  doi     = {10.1007/s00041-019-09690-0},
}

@article{Nik,
  author  = {Nikolski, N.},
  title   = {In Search of the Invisible Spectrum},
  journal = {Ann. Inst. Fourier (Grenoble)},
  volume  = {49},
  number  = {6},
  pages   = {1925--1998},
  year    = {1999},
  doi     = {10.5802/aif.1743},
}

@book{BaakeGrimm13, author={Baake, Michael and Grimm, Uwe}, title={Aperiodic Order. Vol. 1: A Mathematical Invitation}, series={Encyclopedia of Mathematics and its Applications}, volume={149}, publisher={Cambridge University Press}, year={2013}}

@article{BaakeLenz04, author={Baake, Michael and Lenz, Daniel}, title={Dynamical systems on translation bounded measures: pure point dynamical and diffraction spectra}, journal={Ergodic Theory Dynam. Systems}, volume={24}, year={2004}, pages={1867--1893}}

@article{LagariasPleasants03, author={Lagarias, Jeffrey C. and Pleasants, Peter A. B.}, title={Repetitive Delone sets and quasicrystals}, journal={Ergodic Theory Dynam. Systems}, volume={23}, year={2003}, pages={831--867}}

@incollection{LenzStollmann03, author={Lenz, Daniel and Stollmann, Peter}, title={Delone dynamical systems and associated random operators}, booktitle={Operator Algebras and Mathematical Physics (Constan\c{t}a, 2001)}, publisher={Theta}, year={2003}, pages={267--285}}

@incollection{KellendonkPutnam00, author={Kellendonk, Johannes and Putnam, Ian F.}, title={Tilings, $C^*$-algebras and $K$-theory}, booktitle={Directions in Mathematical Quasicrystals}, series={CRM Monogr. Ser.}, volume={13}, publisher={Amer. Math. Soc.}, year={2000}, pages={177--206}}

@article{Hul, author={Hulanicki, A.}, title={On the spectrum of convolution operators on groups with polynomial growth}, journal={Invent. Math.}, volume={17}, year={1972}, pages={135--142}}

@article{SF, author={Shirali, S. and Ford, J. W. M.}, title={Symmetry in complex involutory {B}anach algebras. {II}}, journal={Duke Math. J.}, volume={37}, year={1970}, pages={275--280}}

@article{Ptak, author={Pt\'ak, V.}, title={Banach algebras with involution}, journal={Manuscripta Math.}, volume={6}, year={1972}, pages={245--290}}

@article{Exel, author={Exel, R.}, title={Amenability for {F}ell bundles}, journal={J. Reine Angew. Math.}, volume={492}, year={1997}, pages={41--73}}

@article{GK1, author={Gr\"ochenig, K. and Klotz, A.}, title={Noncommutative approximation: inverse-closed subalgebras and off-diagonal decay of matrices}, journal={Constr. Approx.}, volume={32}, year={2010}, pages={429--466}}

@article{GK2, author={Gr\"ochenig, K. and Klotz, A.}, title={Norm-controlled inversion in smooth {B}anach algebras, {I}}, journal={J. Lond. Math. Soc. (2)}, volume={88}, year={2013}, pages={49--64}}

@article{ENZ, author={El-Fallah, O. and Nikolski, N. K. and Zarrabi, M.}, title={Estimates for resolvents in {B}eurling--{S}obolev algebras}, journal={St. Petersburg Math. J.}, volume={10}, year={1999}, pages={901--964}}

@incollection{ShSu, author={Shin, C. E. and Sun, Q.}, title={Differential subalgebras and norm-controlled inversion}, booktitle={Operator Theory, Operator Algebras and Their Interactions with Geometry and Topology}, series={Oper. Theory Adv. Appl.}, volume={278}, publisher={Birkh\"auser}, year={2020}, pages={467--485},
  doi = {10.1007/978-3-030-43380-2_22},
}

@article{Deaconu95,
  author  = {Deaconu, Valentin},
  title   = {Groupoids associated with endomorphisms},
  journal = {Trans. Amer. Math. Soc.},
  volume  = {347},
  year    = {1995},
  pages   = {1779--1786},
}

@article{Abadie03,
  author  = {Abadie, Fernando},
  title   = {Enveloping actions and {T}akai duality for partial actions},
  journal = {J. Funct. Anal.},
  volume  = {197},
  year    = {2003},
  pages   = {14--67},
}

@article{BenedettiGambaudo03,
  author  = {Benedetti, Riccardo and Gambaudo, Jean-Marc},
  title   = {On the dynamics of {$G$}-solenoids. {A}pplications to {D}elone sets},
  journal = {Ergodic Theory Dynam. Systems},
  volume  = {23},
  year    = {2003},
  pages   = {673--691},
}

@article{Bost,
  author  = {Bost, Jean-Beno\^{\i}t},
  title   = {Principe d'{O}ka, {$K$}-th\'eorie et syst\`emes dynamiques non commutatifs},
  journal = {Invent. Math.},
  volume  = {101},
  year    = {1990},
  pages   = {261--333},
  doi = {10.1007/BF01231504},
}

@article{FGL08,
  author = {Fendler, Gero and Gr\"ochenig, Karlheinz and Leinert, Michael},
  title = {Convolution-dominated operators on discrete groups},
  journal = {Integral Equations Operator Theory},
  volume = {61},
  number = {4},
  year = {2008},
  pages = {493--509},
  doi = {10.1007/s00020-008-1604-7},
  eprint = {0801.0385},
  archivePrefix = {arXiv},
}

@article{Baskakov97,
  author = {Baskakov, A. G.},
  title = {Asymptotic estimates for the entries of the matrices of inverse operators and harmonic analysis},
  journal = {Siberian Math. J.},
  volume = {38},
  number = {1},
  year = {1997},
  pages = {10--22},
  doi = {10.1007/BF02674895},
}

@article{GKII,
  author = {Gr\"ochenig, Karlheinz and Klotz, Andreas},
  title = {Norm-controlled inversion in smooth {B}anach algebras, {II}},
  journal = {Math. Nachr.},
  volume = {287},
  number = {8--9},
  year = {2014},
  pages = {917--937},
  doi = {10.1002/mana.201200312},
  eprint = {1211.2974},
  archivePrefix = {arXiv},
}

@article{KB25,
  author = {K\"ohldorfer, Lukas and Balazs, Peter},
  title = {Wiener pairs of {B}anach algebras of operator-valued matrices},
  journal = {J. Math. Anal. Appl.},
  volume = {549},
  number = {2},
  year = {2025},
  eid = {129525},
  doi = {10.1016/j.jmaa.2025.129525},
  eprint = {2407.16416},
  archivePrefix = {arXiv},
}

@article{Flores25,
  author = {Flores, Felipe I.},
  title = {Polynomial growth and functional calculus in algebras of integrable cross-sections},
  journal = {J. Math. Anal. Appl.},
  volume = {549},
  number = {2},
  year = {2025},
  eid = {129486},
  doi = {10.1016/j.jmaa.2025.129486},
  eprint = {2401.09730},
  archivePrefix = {arXiv},
}

@article{AOP26,
  author = {Austad, Are and Ortega, Eduard and Palmstr\o m, Mathias},
  title = {{$K$}-theory invariance of {$L^p$}-operator algebras associated with \'etale groupoids of strong subexponential growth},
  journal = {Doc. Math.},
  year = {2026},
  note = {Published online first},
  doi = {10.4171/DM/1066},
  eprint = {2410.05005},
  archivePrefix = {arXiv},
}

@article{Nica08,
  author = {Nica, Bogdan},
  title = {Relatively spectral morphisms and applications to {$K$}-theory},
  journal = {J. Funct. Anal.},
  volume = {255},
  year = {2008},
  pages = {3303--3328},
  eprint = {0805.4788},
  archivePrefix = {arXiv},
}

@misc{Sims18,
  author = {Sims, Aidan},
  title = {Hausdorff \'etale groupoids and their {$C^*$}-algebras},
  year = {2018},
  eprint = {1710.10897},
  archivePrefix = {arXiv},
  primaryClass = {math.OA},
  note = {Version 2; Chapter 4}
}

\end{document}